\newtheorem{definition}{Definition}[section] 
\let\olddefinition\definition
\renewcommand{\definition}{\olddefinition\normalfont}
\newtheorem{remark}{Remark}[section]
\let\oldremark\remark
\renewcommand{\remark}{\oldremark\normalfont}
\newtheorem{proposition}{Proposition}[section]
\newtheorem{theorem}{Theorem}[section]
\newtheorem{corollary}{Corollary}[section]
\newtheorem{notation}{Notation}[section]
\let\oldexample\example
\renewcommand{\example}{\oldexample\normalfont}
\let\emptyset\varnothing
\begin{document}

\title{Sigma-Adequate Link Diagrams and the Tutte Polynomial}
\author{Adam Giambrone\\ University of Connecticut\\ adam.giambrone@uconn.edu}
\date{}
\maketitle

\begin{abstract}
In this paper, we characterize the sigma-adequacy of a link diagram in two ways: in terms of a certain edge subset of its Tait graph and in terms of a certain product of Tutte polynomials. Furthermore, we show that the symmetrized Tutte polynomial of the Tait graph of a link diagram can be written as a sum of these products of Tutte polynomials, where the sum is over the sigma-adequate states of the given link diagram. Using this state sum, we show that the number of sigma-adequate states of a link diagram is bounded above by the number of spanning trees in its associated Tait graph. By combining results, we give a method to find all of the sigma-adequate states of a link diagram. Finally, we give necessary and sufficient conditions for a link diagram to be sigma-adequate and sigma-homogeneous (also called homogeneously adequate) with respect to a given state. 
\end{abstract}

\section{Introduction}
Since the early years of knot theory, a correspondence between link diagrams and edge-signed planar graphs (called \textit{Tait graphs}) has been known to exist (\cite{Przytycki}). As a means to study both links and graphs, polynomial invariants have been used. An important polynomial invariant of graphs, introduced in 1947, is the two-variable \textit{Tutte polynomial} (\cite{Tutte}) and an important polynomial invariant of links, introduced in 1984, is the one-variable \textit{Jones polynomial} (\cite{Jones1}, \cite{Jones2}). As shown by Kauffman in \cite{Bracket}, the Jones polynomial satisfies a recursive skein relation that parallels the contraction-deletion relation of the Tutte polynomial. Deepening the connection between the Jones and Tutte polynomials, Thistlethwaite expressed the Jones polynomial of an alternating link in terms of the Tutte polynomial of its associated Tait graph (\cite{Thistlethwaite1}) and, soon after, Kauffman extended this result to all links by defining a Tutte polynomial for edge-signed graphs (\cite{Kauffman}). 

Kauffman also generalized the Jones polynomial to a two-variable Laurent polynomial that is now called the \textit{Kauffman polynomial} (\cite{KauffPoly}). Using the (unnormalized) Kauffman polynomial of a link diagram, Thistlethwaite (\cite{Thistlethwaite2}) extracted two \textit{boundary term polynomials} $\phi_{D}^{+}(t)$ and $\phi_{D}^{-}(t)$ (called \textit{critical line polynomials} in \cite{Stoimenow2}) and expressed each such polynomial as a product of two Tutte polynomials, where the Tutte polynomials come from edge-contractions and edge-deletions of the associated Tait graph. From this, Thistlethwaite shows that a link diagram is \textit{A-adequate} (resp. \textit{B-adequate}) if any only if the boundary term polynomial $\phi_{D}^{+}(t)$ (resp. $\phi_{D}^{-}(t)$) is nonvanishing. 

The family of \textit{semi-adequate} links, that is, links that have either A- or B-adequate diagrams, was introduced by Lickorish and Thistlethwaite (\cite{LT}, \cite{Lickorish}) and is a very large family of links. For example, the family of semi-adequate links has been shown to contain all alternating links, all adequate links, all Montesinos links, all positive and negative closed braids, all closed 3-braids, and all planar cables of the link families just mentioned (\cite{LT}, \cite{Stoimenow}). To give a sense of how frequently semi-adequate links occur, we have that all knots with at most 10 crossings are semi-adequate and all but two 11-crossing prime knots are semi-adequate (\cite{Stoimenow2}). As one would expect, the family of semi-adequate links has been widely studied (\cite{TailAdeq}, \cite{HeadTail}, \cite{New}, \cite{EffieChristine}, \cite{Christine}, \cite{LT}, \cite{Lickorish}, \cite{Stoimenow}, \cite{Stoimenow2}, \cite{Thistlethwaite2}). While many knots and links are known to be semi-adequate, it has been shown (\cite{LT}, \cite{Stoimenow2}, \cite{Thistlethwaite2}) that there exist links that are neither A- nor B-adequate (such links are sometimes called \textit{inadequate}). 


In this paper, we extend some of the results of Thistlethwaite from \cite{Thistlethwaite2} to the larger family of \textit{$\sigma$-adequate} link diagrams, where $\sigma$ denotes a fixed but arbitrary state of a link diagram. To begin, let $D$ denote a checkerboard-colored link diagram with associated Tait graph $G$. Given that the edges of $G$ are labeled with $+$ or $-$ and given that the crossings of $D$ are either A-resolved or B-resolved according to the state $\sigma$, we use $E_{\sigma}$ to denote the union of the $+$ edges of $G$ whose corresponding crossings are A-resolved and the $-$ edges of $G$ whose corresponding crossings are B-resolved. Given the edge subset $E_{\sigma} \subseteq E(G)$, let $G|E_{\sigma}$ denote $G$ restricted to the edge set $E_{\sigma}$ and let $\overline{E_{\sigma}}=E(G)-E_{\sigma}$ denote the complement of the edge set $E_{\sigma}$. The first main result of this paper is given below. 

\begin{theorem} \label{introthm}
Let $D$ be a connected, reduced, checkerboard-colored link diagram with at least one crossing and with associated Tait graph $G$. Then $D$ is $\sigma$-adequate with respect to a state $\sigma$ if and only if there exists a partition $E(G)=E_{\sigma} \sqcup \overline{E_{\sigma}}$ of the edges of $G$ such that the following conditions hold.  
\begin{enumerate}
	\item[(1)] Every edge of $E_{\sigma}$ is contained in a cycle of $G|E_{\sigma}$. 
  \item[(2)] No edge of $\overline{E_{\sigma}}$ has both endpoints on a connected component of $G|E_{\sigma}$. 
\end{enumerate}
\end{theorem}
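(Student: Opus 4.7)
My plan is to translate $\sigma$-adequacy into a loop-freeness condition for an auxiliary multigraph and then identify, via a ribbon-surface model, which edges of $G$ produce such loops. I recall that $D$ is $\sigma$-adequate if and only if no state circle of $s_\sigma D$ meets itself at any crossing, equivalently, in the multigraph whose vertex set is $\{\text{state circles of } s_\sigma D\}$ and whose edges are the crossings of $D$ (each joining the two state circles meeting at it), no edge is a loop. So it suffices to describe loop-producing crossings combinatorially on $G$ and verify that forbidding them jointly is equivalent to (1) and (2).

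To make the translation, I would build a state surface $F_\sigma \subset S^2$ by placing a small disk at each vertex of $G$ and attaching an untwisted planar band at each edge of $E_\sigma$, using the planar embedding of $G$ inherited from $D$. Because $E_\sigma$ consists exactly of the edges whose smoothing glues the two adjacent shaded disks together, the boundary circles of $F_\sigma$ are precisely the state circles of $s_\sigma D$, and the two local smoothing arcs at each crossing lie on well-defined boundary circles of $F_\sigma$. I then handle the two types of edges separately. For $e \in E_\sigma$, the smoothing arcs are the two sides of the band associated to $e$, and a standard planar band-attachment argument shows they lie on the same boundary circle of $F_\sigma$ if and only if the band is a bridge of $G|E_\sigma$, that is, $e$ lies in no cycle of $G|E_\sigma$. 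Thus loop-freeness among $E_\sigma$-edges is exactly condition (1). For $e \in \overline{E_\sigma}$ with endpoints $u,v$, no band is attached at this crossing, and the arc $e$ has its interior in a single face $f$ of $G|E_\sigma$ in $S^2$, so the two smoothing arcs at the crossing lie on the boundary of $f$ near $u$ and near $v$. Because any connected component of $G|E_\sigma$ contributes at most one connected piece to the boundary of a face it borders, those two arcs lie on the same boundary circle of $F_\sigma$ if and only if $u$ and $v$ lie in the same connected component of $G|E_\sigma$. Thus loop-freeness among $\overline{E_\sigma}$-edges is exactly condition (2).

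Combining the two cases yields the biconditional. The main obstacle I expect is rigorously establishing the correspondence between boundary circles of $F_\sigma$ and state circles of $s_\sigma D$, together with the precise identification of which boundary circles the two smoothing arcs at each crossing land on. This requires carefully tracking how the $\pm$ label of an edge of $G$ interacts with the A/B assignment of $\sigma$, so that $+$-with-A and $-$-with-B are treated uniformly as the ``band-present'' cases and the complementary pair as ``band-absent.'' The hypotheses that $D$ is connected and reduced with at least one crossing are used to rule out degenerate situations (loops or isthmuses in $G$) that would complicate the face-boundary analysis. Once this planar-surface dictionary is pinned down, the two cases reduce to elementary statements about bridges in ribbon graphs and about face-boundary walks of planar graphs.
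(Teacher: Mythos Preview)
Your approach is correct and takes a genuinely different route from the paper. The paper first proves an explicit state--circle count, $|s_\sigma(D)| = e(G|E_\sigma) - v(G|E_\sigma) + 2k(G|E_\sigma)$, by induction on the number of crossings, and then invokes the crossing--change characterization of $\sigma$-adequacy ($|s_\sigma(D_c')| < |s_\sigma(D)|$ for every crossing $c$). Substituting the formula into this inequality and running through the four edge types $(+,A),(+,B),(-,A),(-,B)$ yields the conditions ``$G|E_\sigma$ has no bridges'' and ``$G/E_\sigma$ has no loops,'' which are then rewritten as (1) and (2). You instead bypass the count entirely: you identify the state circles of $s_\sigma D$ directly with the boundary components of the planar ribbon surface $F_\sigma$ obtained from vertex disks and $E_\sigma$-bands, and then read off loop--production geometrically --- a band gives a loop iff it is a bridge, and an $\overline{E_\sigma}$-edge gives a loop iff its endpoints lie in one component of $G|E_\sigma$, using that each connected planar graph has simply connected faces. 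This is essentially the partial--duality picture the paper develops only in Section~4 to \emph{reinterpret} the theorem, used here instead to \emph{prove} it. Your route is more conceptual and avoids the case-heavy induction; the paper's route is more elementary and produces the state--circle count formula as an independent byproduct, though that formula is not used elsewhere beyond this proof. The obstacle you flag --- pinning down the boundary--circle/state--circle correspondence and tracking the $\pm$ versus $A/B$ interaction --- is exactly the content of the paper's Proposition~3.3 and Proposition~4.2, so it is genuinely the crux and is indeed surmountable by a local check at each crossing.
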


To generalize the boundary term polynomials of Thistlethwaite (\cite{Thistlethwaite2}) we define, for each state $\sigma$ of a link diagram, a polynomial $\phi_{D}^{\sigma}(t)$ that is a product of two Tutte polynomials and that reduces to a boundary term polynomial when the state being considered is the all-A state or the all-B state. Using the polynomial $\phi_{D}^{\sigma}(t)$, we extend Thistlethwaite's nonvanishing result to the family of $\sigma$-adequate link diagrams. 

\begin{theorem} \label{mainthm1}
Let $D$ be a connected checkerboard-colored link diagram with at least one crossing and with associated Tait graph $G$. Then $D$ is $\sigma$-adequate with respect to a state $\sigma$ if and only if $\phi_{D}^{\sigma}(t) \neq 0$. 
\end{theorem}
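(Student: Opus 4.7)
The plan is to combine Theorem \ref{introthm} with a direct analysis of the two Tutte polynomial factors comprising $\phi_{D}^{\sigma}(t)$. Since Theorem \ref{introthm} already characterizes $\sigma$-adequacy as the existence of a partition $E(G) = E_{\sigma} \sqcup \overline{E_{\sigma}}$ satisfying the two cycle/component conditions, the task reduces to showing that the polynomial $\phi_{D}^{\sigma}(t)$ is nonzero as a function of $t$ if and only if both conditions (1) and (2) hold.

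First, I would unwind the definition of $\phi_{D}^{\sigma}(t)$. By construction, it factors as a product of two Tutte polynomial specializations $T(H_{1}^{\sigma};\,\cdot\,) \cdot T(H_{2}^{\sigma};\,\cdot\,)$, where each $H_{i}^{\sigma}$ is obtained from the Tait graph $G$ by contracting one part of the partition and deleting the other (the contracted/deleted roles being swapped between $H_{1}^{\sigma}$ and $H_{2}^{\sigma}$, with the sign labels on the edges dictating which edges go where). In particular, this recovers Thistlethwaite's original construction of $\phi_{D}^{\pm}(t)$ from \cite{Thistlethwaite2} when $\sigma$ is the all-A or all-B state. Since a product of Laurent polynomials vanishes iff some factor does, $\phi_{D}^{\sigma}(t) \neq 0$ iff each of the two Tutte specializations is nonzero.

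Next, I would prove a local vanishing criterion: each Tutte polynomial factor in $\phi_{D}^{\sigma}(t)$ vanishes (as a polynomial in $t$) precisely when the graph $H_{i}^{\sigma}$ contains a loop. Translating this back to $G$, a loop in $H_{1}^{\sigma}$ arises exactly when some edge of $\overline{E_{\sigma}}$ has both endpoints on a single connected component of $G|E_{\sigma}$ (since contracting the $E_{\sigma}$-edges collapses that component to a point and converts such an edge into a loop); this is precisely the failure of condition (2). Symmetrically, a loop in $H_{2}^{\sigma}$ arises exactly when some edge of $E_{\sigma}$ is a bridge of $G|E_{\sigma}$, i.e., is not contained in any cycle of $G|E_{\sigma}$; this is precisely the failure of condition (1). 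Combining these equivalences, $\phi_{D}^{\sigma}(t) \neq 0$ iff both conditions (1) and (2) hold iff, by Theorem \ref{introthm}, $D$ is $\sigma$-adequate.

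The main obstacle will be the middle step: verifying that the particular one-variable specialization of the Tutte polynomial that appears in the factors of $\phi_{D}^{\sigma}(t)$ is genuinely sensitive to the presence of loops and vanishes for no other reason. This will require a careful accounting of how the leading and trailing powers of $t$ in $T(H_{i}^{\sigma};\,\cdot\,)$ arise from the cycle and cocycle structure of $H_{i}^{\sigma}$, and an argument that no unexpected cancellations occur among the contributing spanning subgraphs. Added care will also be needed to keep track of the sign labels on $E(G)$ and to confirm that the conventions used in defining $H_{1}^{\sigma}$ and $H_{2}^{\sigma}$ specialize correctly to Thistlethwaite's boundary term polynomials in the all-A and all-B cases, so that Theorem \ref{mainthm1} is a bona fide generalization of his nonvanishing result.
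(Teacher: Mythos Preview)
Your overall strategy is the paper's: reduce $\sigma$-adequacy to a graph-theoretic condition on $G$, then show that condition is equivalent to $\phi_D^\sigma(t)\neq 0$. The paper's proof is two lines, citing Proposition~\ref{circleprop}(2) ($D$ is $\sigma$-adequate iff $G|E_\sigma$ has no bridges and $G/E_\sigma$ has no loops) and Proposition~\ref{phiprop}(2) ($\phi_D^\sigma(t)\neq 0$ iff the same two conditions hold). Two points in your plan need correction, however.

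First, a hypothesis mismatch: Theorem~\ref{introthm} assumes $D$ is \emph{reduced}, but Theorem~\ref{mainthm1} does not. Routing through Theorem~\ref{introthm} leaves non-reduced diagrams uncovered. The paper instead uses Proposition~\ref{circleprop}(2) directly, which carries no reducedness assumption; Theorem~\ref{introthm} is a downstream corollary of that proposition, not an input to Theorem~\ref{mainthm1}.

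Second, your description of the two Tutte factors and their vanishing criteria is off. By Definition~\ref{phi}, $\phi_D^\sigma(t)=\chi_{G|E_\sigma}(0,t)\cdot\chi_{G/E_\sigma}(t,0)$; these are not obtained by ``contracting one part and deleting the other with roles swapped,'' and they do not share a uniform loop criterion. The factor $\chi_{G/E_\sigma}(t,0)$ vanishes iff $G/E_\sigma$ has a loop (loops contribute a factor of $y$), matching your $H_1^\sigma$ analysis. But $\chi_{G|E_\sigma}(0,t)$ vanishes iff $G|E_\sigma$ has a \emph{bridge} (bridges contribute a factor of $x$). Your claim that ``a loop in $H_2^\sigma$ arises exactly when some edge of $E_\sigma$ is a bridge of $G|E_\sigma$'' fails for $H_2^\sigma=G/\overline{E_\sigma}$: take $G$ a $4$-cycle with $E_\sigma$ two adjacent edges; both are bridges of $G|E_\sigma$, yet $G/\overline{E_\sigma}$ is a pair of parallel edges with no loop. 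Once you use the correct bridge/loop pair, the equivalence with conditions (1) and (2) of Theorem~\ref{introthm} is immediate, and the nontrivial direction (no bridge and no loop $\Rightarrow$ nonvanishing) is exactly Proposition~\ref{phiprop}(2), which the paper proves by appealing to Proposition~2(ii) of \cite{Thistlethwaite1}.
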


Given the Tutte polynomial $\chi_{G}(x,y)$ of the Tait graph $G$ associated to a checkerboard-colored link diagram $D$, call $\chi_{G}(t,t)$ the \textit{symmetrized Tutte polynomial} of $G$. The third main result of this paper, stated below, gives a $\sigma$-adequate state sum expansion for $\chi_{G}(t,t)$. 

\begin{theorem} \label{mainthm2}
Let $D$ be a connected checkerboard-colored link diagram with at least one crossing and with associated Tait graph $G$. Then the symmetrized Tutte polynomial of $G$ can be expanded as $$\chi_{G}(t,t)=\sum_{\sigma\text{-adequate\ states}\ \sigma}\phi_{D}^{\sigma}(t).$$ 
\end{theorem}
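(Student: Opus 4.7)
The plan is to deduce Theorem \ref{mainthm2} from the Kook--Reiner--Stanton convolution identity for the Tutte polynomial together with Theorem \ref{mainthm1}. First, observe that the assignment $\sigma \mapsto E_\sigma$ gives a bijection between the states of $D$ and the subsets of $E(G)$: each edge of $G$ carries a fixed sign, and the A- versus B-resolution at the corresponding crossing is precisely the choice of whether or not that edge lies in $E_\sigma$. Sums over states of $D$ may therefore be reindexed as sums over subsets $S \subseteq E(G)$ via $S = E_\sigma$.

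Next, by construction $\phi_D^\sigma(t)$ is a product of two Tutte polynomials, one built from the restriction $G|E_\sigma$ and the other from the contraction $G/E_\sigma$; it therefore has the shape $\chi_{G|E_\sigma}(0,t)\,\chi_{G/E_\sigma}(t,0)$, possibly up to a sign or variable substitution inherited from the Kauffman bracket normalization. The Tutte polynomial satisfies the convolution identity
\[
\chi_G(x,y) \;=\; \sum_{S\subseteq E(G)} \chi_{G|S}(0,y)\,\chi_{G/S}(x,0),
\]
which can be verified directly from the Whitney rank expansion by reindexing each $A\subseteq E(G)$ as $A = B\sqcup C$ with $B\subseteq S$ and $C\subseteq \overline{S}$ and recognizing the two inner sums as restriction and contraction evaluations. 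Specializing $x = y = t$ and using the bijection above turns the right-hand side into $\sum_{\sigma}\phi_D^\sigma(t)$, where the sum runs over all states of $D$. Finally, Theorem \ref{mainthm1} tells us that $\phi_D^\sigma(t) = 0$ whenever $\sigma$ is not $\sigma$-adequate, so this sum collapses to the sum over $\sigma$-adequate states only, yielding the claimed identity.

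The \emph{main obstacle} is lining up the precise definition of $\phi_D^\sigma(t)$ given in the paper with the summand $\chi_{G|E_\sigma}(0,t)\,\chi_{G/E_\sigma}(t,0)$ in the convolution identity. Any signs, prefactors, or substitutions of the form $t \mapsto -t$ or $t \mapsto -t^{-2}$ arising from the Kauffman bracket conventions must be tracked carefully so that the specialization $x = y = t$ on both sides lands on the desired symmetrized Tutte polynomial; if the paper's $\phi_D^\sigma(t)$ uses a nontrivial reparameterization, one instead applies the corresponding reparameterized form of the convolution identity. Once this matching is in place, the remainder is a clean bijective reindexing combined with the vanishing supplied by Theorem \ref{mainthm1}.
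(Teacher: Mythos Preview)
Your proposal is correct and follows essentially the same route as the paper: invoke the Kook--Reiner--Stanton convolution identity (the paper's Theorem~\ref{Tuttesum}), reindex via the bijection $\sigma \leftrightarrow E_\sigma$ between states and edge subsets, and then drop the non-$\sigma$-adequate terms using Theorem~\ref{mainthm1}. The only unnecessary hedging is your worry about signs, prefactors, or substitutions like $t\mapsto -t$: in this paper $\phi_D^\sigma(t)$ is \emph{defined} (Definition~\ref{phi}) to be exactly $\chi_{G|E_\sigma}(0,t)\cdot\chi_{G/E_\sigma}(t,0)$, so the matching with the convolution summand is immediate and no Kauffman-bracket renormalization enters.
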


Using this state sum, we are able to bound the number of $\sigma$-adequate states of a link diagram. 

\begin{theorem} \label{maincor1}
Let $D$ be a connected, reduced, checkerboard-colored link diagram with at least one crossing and with associated Tait graph $G$. Then the number of $\sigma$-adequate states of $D$ is bounded below by two and above by the number of spanning trees in $G$.
\end{theorem}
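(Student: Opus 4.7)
The plan is to prove the upper bound by evaluating the state sum of Theorem~\ref{mainthm2} at $t=1$, and to prove the lower bound by exhibiting two specific $\sigma$-adequate states via the criterion of Theorem~\ref{introthm}.

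For the upper bound, I would substitute $t=1$ into
\[
\chi_{G}(t,t)=\sum_{\sigma\text{-adequate\ states}\ \sigma}\phi_{D}^{\sigma}(t),
\]
yielding $\chi_{G}(1,1)=\sum_{\sigma}\phi_{D}^{\sigma}(1)$. The left-hand side equals the number of spanning trees of the connected graph $G$ by the standard Tutte-polynomial evaluation. On the right, each summand $\phi_{D}^{\sigma}(t)$ is a product of two Tutte polynomials and so has nonnegative integer coefficients; thus $\phi_{D}^{\sigma}(1)$ is simply the sum of those coefficients. By Theorem~\ref{mainthm1} such a polynomial is nonzero precisely when $\sigma$ is $\sigma$-adequate, in which case $\phi_{D}^{\sigma}(1)\geq 1$. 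Hence the number of $\sigma$-adequate states is at most $\chi_{G}(1,1)$, the number of spanning trees of $G$.

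For the lower bound, I would exhibit two distinct subsets of $E(G)$ that realize $\sigma$-adequate states via Theorem~\ref{introthm}, using that states of $D$ correspond bijectively to subsets of $E(G)$ via $\sigma\mapsto E_{\sigma}$. For $E_{\sigma_{1}}=E(G)$, condition~(2) is vacuous and condition~(1) asserts that every edge of $G$ lies in a cycle of $G$, i.e., that $G$ is bridgeless. For $E_{\sigma_{2}}=\emptyset$, condition~(1) is vacuous and condition~(2) reduces to $G$ having no loops. Since $D$ is reduced and nugatory crossings correspond, with respect to the fixed checkerboard coloring, to loops or to bridges of $G$, the Tait graph $G$ is simultaneously bridgeless and loopless. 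Both $\sigma_{1}$ and $\sigma_{2}$ are therefore $\sigma$-adequate, and they are distinct because $E(G)\neq\emptyset$. The one piece of the argument requiring care is this translation of reducedness into the loopless-and-bridgeless property of $G$; once that is in hand, both bounds follow immediately from the theorems established earlier.
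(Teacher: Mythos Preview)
Your argument is correct and follows essentially the same route as the paper. For the upper bound the paper also evaluates the state sum at $t=1$, invoking the nonnegativity of the coefficients of $\phi_{D}^{\sigma}(t)$ (Property~(1) of Proposition~\ref{phiprop}) and the identification $\chi_{G}(1,1)=\#\{\text{spanning trees}\}$; your version is in fact a bit more streamlined since you bypass the explicit spanning-tree expansion. For the lower bound the paper cites Proposition~\ref{checkerrmk} (the checkerboard states are $\sigma$-adequate), whereas you use Theorem~\ref{introthm} with $E_{\sigma}=E(G)$ and $E_{\sigma}=\emptyset$; by Proposition~\ref{signlemma} these are exactly the white and black checkerboard states, so the two arguments coincide, and your ``piece requiring care'' (that reduced $D$ forces $G$ bridgeless and loopless) is precisely Remark~\ref{nugatoryrmk}(4).
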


By combining the results of Theorem~\ref{introthm}, Theorem~\ref{mainthm1}, and Theorem~\ref{mainthm2}, we present a method for finding all of the $\sigma$-adequate states of a connected, reduced, checkerboard-colored link diagram. We highlight this method by using it to find all 20 $\sigma$-adequate states of the standard diagram (\cite{KnotInfo}) of the nonalternating inadequate knot $11n_{95}$.  

Next, we apply the notion of partial duality, as defined by Chmutov in \cite{Chmutov}, to the ribbon graph associated to the black checkerboard state of a checkerboard-colored link diagram in order to give necessary and sufficient conditions for a link diagram to be $\sigma$-adequate and $\sigma$-homogeneous (also called \textit{homogeneously adequate}) with respect to a given state. The family of links with homogeneously adequate diagrams has been studied by a number of authors (\cite{Homogeneous}, \cite{Guts}, \cite{Survey}, \cite{Ozawa}).     

\begin{theorem} \label{maincor2}
Let $D$ be a connected, reduced, checkerboard-colored link diagram with at least one crossing and with associated Tait graph $G$. Then $D$ is homogeneously adequate with respect to a state $\sigma$ if and only if there exists a partition $E(G)=E_{\sigma} \sqcup \overline{E_{\sigma}}$ of the edges of $G$ such that the following conditions hold.
\begin{enumerate}
	\item[(1)] Every edge of $E_{\sigma}$ is contained in a cycle of $G|E_{\sigma}$. 
  \item[(2)] No edge of $\overline{E_{\sigma}}$ has both endpoints on a connected component of $G|E_{\sigma}$. 
	\item[(3)] Each connected component of $G|E_{\sigma}$ either consists entirely of $+$ edges or consists entirely of $-$ edges. 
	\item[(4)] The edges of $\overline{E_{\sigma}}$ inside a fundamental cycle of $G|E_{\sigma}$ either consist entirely of $+$ edges or consist entirely of $-$ edges. 
 	\item[(5)] The edges of $\overline{E_{\sigma}}$ in the unbounded region outside all of the fundamental cycles of $G|E_{\sigma}$ either consist entirely of $+$ edges or consist entirely of $-$ edges. 
\end{enumerate}
\end{theorem}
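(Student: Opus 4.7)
The plan is to reduce the theorem to conditions (3)--(5) via Theorem~\ref{introthm}, and then to interpret those three conditions as the block-monochromaticity requirement defining $\sigma$-homogeneity. Recall that $D$ is $\sigma$-homogeneous when each block of the state graph $\mathbb{G}_{\sigma}$ (the graph whose vertices are the state circles of $\sigma$ and whose edges are the crossings, labeled by their resolution type) consists entirely of edges of a single sign. Since Theorem~\ref{introthm} already characterizes $\sigma$-adequacy by conditions (1) and (2), the task reduces to showing that, under the hypothesis of $\sigma$-adequacy, conditions (3), (4), and (5) capture exactly the block-monochromaticity of $\mathbb{G}_{\sigma}$.

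To realize this, I would transfer the combinatorics of $\mathbb{G}_{\sigma}$ into the Tait graph picture using Chmutov's partial duality. Starting with the ribbon graph $\mathbb{D}_{\mathrm{black}}$ associated to the black checkerboard surface of $D$, whose underlying abstract graph is $G$, partial dualizing $\mathbb{D}_{\mathrm{black}}$ along the edge subset $E_{\sigma}$ produces the ribbon graph of the state $\sigma$, whose underlying graph is $\mathbb{G}_{\sigma}$. Under this operation, the canonical bijection between crossings of $D$, edges of $G$, and edges of $\mathbb{G}_{\sigma}$ is preserved, so the edge signs $\pm$ carry over faithfully; homogeneity can then be checked on $\mathbb{G}_{\sigma}$ by looking at the sign pattern within each of its blocks.

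The key step is to read off the block decomposition of $\mathbb{G}_{\sigma}$ directly from the planar geometry of $G$. Using the planar embedding inherited by $G|E_{\sigma}$, together with the fact that condition (2) forbids any $\overline{E_{\sigma}}$-edge from having both endpoints in a single $G|E_{\sigma}$-component, a careful analysis of the partial dual shows that each block of $\mathbb{G}_{\sigma}$ falls into exactly one of three types: (a) a block consisting of the edges of a connected component of $G|E_{\sigma}$ (here condition (1) is exactly what ensures that such a component contributes a genuine $2$-edge-connected block rather than a bridge), (b) a block consisting of those $\overline{E_{\sigma}}$-edges lying inside a given fundamental cycle of $G|E_{\sigma}$, or (c) the block consisting of the $\overline{E_{\sigma}}$-edges in the unbounded outer region. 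Block-monochromaticity for these three types then translates, respectively, into conditions (3), (4), and (5).

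The main obstacle I anticipate is the rigorous identification of the blocks of $\mathbb{G}_{\sigma}$ under partial duality. Concretely, one must track how the boundary circles of $\mathbb{D}_{\mathrm{black}}$ are re-glued by partial duality to form the vertices of $\mathbb{G}_{\sigma}$, and then verify that the cut vertices of $\mathbb{G}_{\sigma}$ arise precisely at the interfaces between the three region types above. This is a topological/planar bookkeeping argument, driven by conditions (1) and (2); once it is in hand, the equivalence of homogeneity with (3)--(5) follows by directly matching sign-uniformity on each block with the three edge-set colorings prescribed by the theorem.
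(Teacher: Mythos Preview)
Your proposal is correct and follows essentially the same route as the paper: the paper carries out precisely this partial-duality analysis of $\mathbb{G}_{\sigma_{bl}}$ (in Remark~\ref{PDstagesrmk}, Remark~\ref{complregremark}, and Propositions~\ref{bigthm}--\ref{bigthm2}), classifying the pieces of $\mathbb{G}_{\sigma}$ into exactly your three types (a), (b), (c) and matching them to conditions (3), (4), (5). The only cosmetic difference is that the paper phrases $\sigma$-homogeneity in terms of the complementary regions of the state circles (Definition~\ref{homogeneous}) rather than the blocks of $\mathbb{G}_{\sigma}$, but the resulting trichotomy and the translation to edge-sign conditions on $G$ are identical.
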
 

Finally, we conclude the paper by modifying the method mentioned above to give a method for finding all of the homogeneously adequate states of a given link diagram. As an example, we show that none of the 20 $\sigma$-adequate states of the standard diagram of the knot $11n_{95}$ are $\sigma$-homogeneous. Thus, we show that this knot diagram has no homogeneously adequate states. 

This paper is organized as follows. In Section~\ref{background}, we provide background on the Tait graph associated to a checkerboard-colored link diagram, the Tutte polynomial of a graph, the $\sigma$-adequacy (as well as A-, B-, and semi-adequacy) of a link diagram, and the unnormalized Kauffman polynomial and boundary term polynomials associated to a link diagram. In Section~\ref{extending}, we generalize the boundary term polynomials to the polynomials $\phi_{D}^{\sigma}(t)$ associated to any state $\sigma$ of a link diagram; we provide a characterization of the $\sigma$-adequacy of a link diagram that involves edge-contractions and edge-deletions of the Tait graph; we prove Theorem~\ref{introthm}, Theorem~\ref{mainthm1}, Theorem~\ref{mainthm2}, and Theorem~\ref{maincor1}; and we provide a method for finding all of the $\sigma$-adequate states of a link diagram. In Section~\ref{duality}, we define the state ribbon graphs of a link diagram, define the operation of partial duality for these ribbon graphs, and build connections between the state ribbon graph perspective and the Tait graph perspective for link diagrams. In Section~\ref{homogmethod}, we introduce the families of $\sigma$-homogeneous and homogeneously adequate link diagrams, we prove Theorem~\ref{maincor2}, and we present a method for finding all of the homogeneously adequate states of a link diagram.


\section{Background} \label{background}

In this section, we present the background from graph theory and knot theory that will be used in the rest of this paper.

\subsection{The Tait Graph Associated to a Link Diagram}

Let $D \subseteq \mathbb{R}^{2}$ denote a diagram of a link $K$. By replacing the crossings of $D$ with 4-valent vertices, we get the \textit{projection graph} $\Gamma_{D}$ associated to $D$. We call $D$ \textit{connected} if $\Gamma_{D}$ is connected. From $D$ we can form its \textit{mirror reflection}, denoted $\overline{D}$, by changing the crossing types of all of the crossings of $D$. 

A link diagram $D$ is called \textit{checkerboard-colored} if each complementary region of $\Gamma_{D}$, which we call a \textit{region} of $D$, is shaded either black or white so that, in a small neighborhood of each crossing, two opposite regions are shaded black and two opposite regions are shaded white. See the left side of Figure~\ref{Taitgraph} for an example of a checkerboard-colored link diagram. It is a well-known result that every link diagram can be checkerboard-colored. 

\begin{figure} 
\centering
\def\svgwidth{1.25in}
	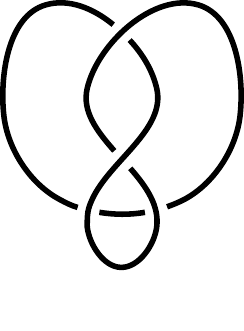 
	\hspace{.5in}
\def\svgwidth{1in}
	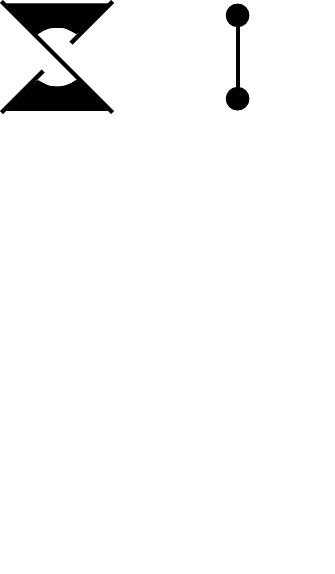 	
	\hspace{.5in}
\def\svgwidth{1.25in}
	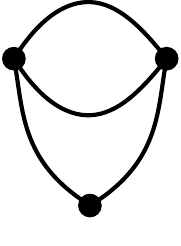 
	\caption{An example of the bijection between a checkerboard-colored link diagram $D$ and its corresponding Tait graph $G$.} 
	\label{Taitgraph}
\end{figure}

\begin{definition} \label{Tait}
To a checkerboard-colored link diagram $D$, we construct the associated \textit{Tait graph} $G$ as follows. First, we associate a vertex to each black region of $D$. Next, we join a pair of distinct vertices by an edge if the two corresponding regions share a crossing of $D$ and we join a vertex to itself if the corresponding region meets a crossing of $D$ twice. Finally, we label the edges with either $+$ or $-$ according to the convention displayed in the center of Figure~\ref{Taitgraph}. What results is an edge-signed planar graph $G$ called the \textit{Tait graph} associated to $D$. See Figure~\ref{Taitgraph} for an example of the construction of a Tait graph. Note that the white regions of $D$ correspond to the faces (complementary regions) of $G$. Also note that changing the crossing type of a single crossing of $D$ corresponds to changing the sign of the corresponding edge of $G$ and forming the mirror reflection $\overline{D}$ of $D$ corresponds to changing all of the edge signs of $G$.   
\end{definition}

\begin{remark} \label{Taitrmk}
As was first observed by Tait in the late 1800s (for the case of alternating knots), there is a bijection between the family of checkerboard-colored link diagrams and the family of edge-signed planar graphs. The construction defined above gives one direction of this bijection and a medial graph construction can be used to give the reverse direction. Refer to \cite{Kauffman} for more details. As a special case of this bijection, alternating checkerboard-colored link diagrams correspond to edge-signed planar graphs where the edges are either all $+$ edges or all $-$ edges. See the top half of Figure~\ref{checkerstate}.  
\end{remark}

\begin{figure}
\centering
\def\svgwidth{5in}
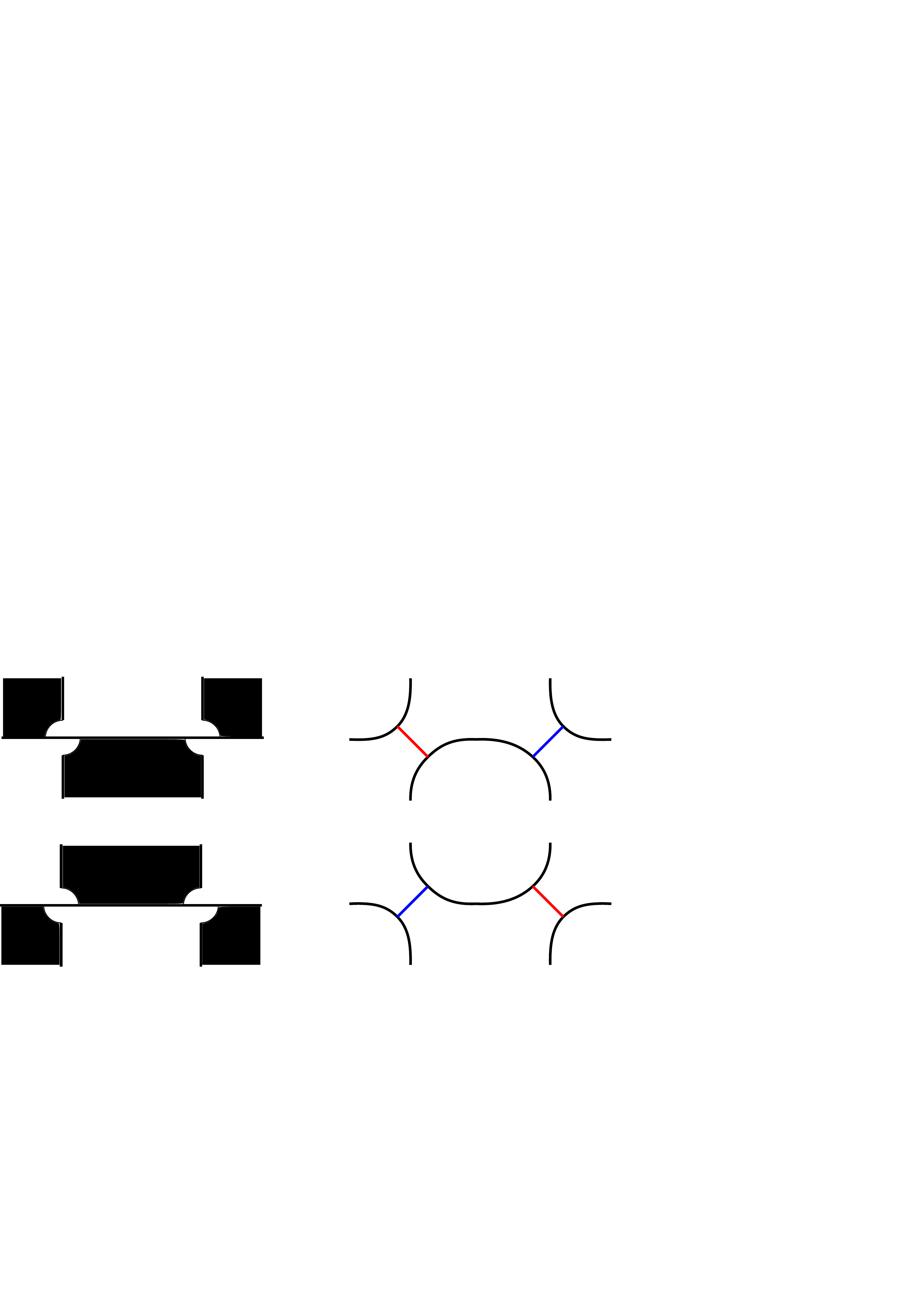
\caption{The local structure for two adjacent crossings of a checkerboard-colored link diagram $D$ (left), the corresponding local behavior of the black checkerboard state graph $H_{\sigma_{bl}}$ (center), and the corresponding local behavior of the associated Tait graph $G$ (right).}
\label{checkerstate}
\end{figure}




Throughout this paper, we will often work with \textit{reduced} link diagrams.  

\begin{definition}
A link diagram $D$ is called \textit{reduced} if it contains no \textit{nugatory crossings}, that is, crossings that meet a region of $D$ more than once. 
\end{definition}


We now introduce some terminology from graph theory. 

\begin{definition} \label{loopdef}
Let $G$ be a graph. An edge of $G$ is called a \textit{loop} if it joins a vertex to itself. An edge $e=\left\{v,w\right\}$ of $G$ is called a \textit{multiedge} if there are multiple \textit{parallel edges} of $G$ that join vertex $v$ to vertex $w$. An edge of $G$ is called a \textit{bridge} if deleting this edge from $G$ increases the number of connected components. A vertex of $G$ is called a \textit{cut vertex} if deleting this vertex and its incident edges from $G$ increases the number of connected components. A graph $G$ is called \textit{nonseparable} if it is connected and contains no cut vertices. Every graph $G$ can be decomposed into \textit{blocks}, where a \textit{block} is either an isolated vertex of $G$, a bridge of $G$ with its incident vertices, a loop of $G$ with its incident vertex, or a maximally connected subgraph of $G$ that contains no cut vertices and is neither a bridge nor a loop of $G$. The blocks and cut vertices of a connected graph $G$ give rise to a \textit{block decomposition} of $G$ as $G=G_{1} \vee G_{2} \vee \cdots \vee G_{k}$, where the $G_{i}$ are the blocks of $G$ and where the $G_{i}$ meet at shared cut vertices of $G$. 
\end{definition}


The remark below, which relates properties of a checkerboard-colored link diagram to properties of its associated Tait graph, will be used multiple times throughout this paper. 

\begin{remark} \label{nugatoryrmk}
Let $D$ be a checkerboard-colored link diagram with associated Tait graph $G$. By Definition~\ref{Tait} and Remark~\ref{Taitrmk}, the following statements can be shown to be true. 
\begin{enumerate}
	\item[(1)] The connected components of $D$ correspond to the connected components of $G$. 
	\item[(2)] $D$ is connected if and only if $G$ is connected. 
	\item[(3)] Nugatory crossings of $D$ correspond to either bridges or loops of $G$. 
	\item[(4)] $D$ is reduced (contains no nugatory crossings) if and only if $G$ contains no bridges and no loops. 
\end{enumerate}
\end{remark}

\subsection{Graph Operations and the Tutte Polynomial} \label{Tuttestuff}

The graph operations of \textit{deletion} and \textit{contraction} with respect to both a single edge and a collection of edges will play an important role in the remainder of this paper. 

\begin{definition} \label{deletedef}
Let $G$ be a graph. Then, for $e$ an edge of $G$, we use $G-e$ to denote the graph obtained from $G$ by \textit{deleting the edge} $e$ and $G/e$ to denote the graph obtained from $G$ by \textit{contracting the edge} $e$. Our convention will be that contracting a loop is the same as deleting the loop. Let $H \subseteq E(G)$ denote a subset of the edges of $G$. Then we use $G|{H}$ to denote $G$ \textit{restricted to the edges of} $H$, that is, $G|H$ is the (spanning) subgraph of $G$ whose vertices are the same as those of $G$ and whose edges are the edges of $H$. Alternatively, $G|H$ is the graph obtained from $G$ by \textit{deleting the edges in the complement} $\overline{H}=E(G)-H$ of the edge set $H$. Finally, let $G/H$ denote the graph obtained from $G$ by \textit{contracting the edges of} $H$. 
\end{definition}

The Tutte polynomial of a graph is defined below. 

\begin{definition} \label{Tuttepoly}
The \textit{Tutte polynomial} of a graph $G$, denoted $\chi_{G}(x, y)$, is a two-variable polynomial invariant defined recursively as follows. 
\begin{enumerate}
	\item[(1)] If $G=G_{1} \sqcup G_{2} \sqcup \cdots \sqcup G_{k}$ is a disjoint union of $k$ graphs, then $$\chi_{G}(x,y)=\chi_{G_{1}}(x,y)\cdot\chi_{G_{2}}(x,y)\cdots\chi_{G_{k}}(x,y).$$  
	\item[(2)] If $G=G_{1} \vee G_{2} \vee \cdots \vee G_{k}$ is a block decomposition of a connected graph $G$ into $k$ blocks, then $$\chi_{G}(x,y)=\chi_{G_{1}}(x,y)\cdot\chi_{G_{2}}(x,y)\cdots\chi_{G_{k}}(x,y).$$ 
  \item[(3)] If $G$ is nonseparable, then $$\chi_{G}(x, y)=\left\{
\begin{array}{ll}
1 & \hspace{.25in} \mathrm{if}\ G\ \mathrm{contains\ no\ edges}\\
x \cdot \chi_{G/e}(x, y) & \hspace{.25in} \mathrm{if}\ e\ \mathrm{is\ a\ bridge\ of}\ G\\
y \cdot \chi_{G-e}(x, y) & \hspace{.25in} \mathrm{if}\ e\ \mathrm{is\ a\ loop\ of}\ G\\
\chi_{G/e}(x, y)+\chi_{G-e}(x, y) & \hspace{.25in} \mathrm{if}\ e\ \mathrm{is\ neither\ a\ bridge\ nor\ a\ loop\ of}\ G\\
\end{array}
\right.
$$
\end{enumerate}
\end{definition}

By repeatedly applying Definition~\ref{Tuttepoly}, we are able to compute the Tutte polynomial of the cycle graph $C_{n}$ with $n$ edges. 

\begin{proposition} \label{Tuttecycleprop}
$\chi_{C_{n}}(x,y)=x^{n-1}+x^{n-2}+\cdots+x^{2}+x+y$. \qed
\end{proposition}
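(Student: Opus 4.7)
The plan is to prove this by induction on $n \geq 2$, using the recursive definition of the Tutte polynomial given in Definition~\ref{Tuttepoly}. The base case $n=2$ can be checked directly: $C_2$ consists of two vertices joined by two parallel edges, and picking either edge $e$ (which is neither a loop nor a bridge) gives $\chi_{C_{2}/e}(x,y)=y$ (a single loop) and $\chi_{C_{2}-e}(x,y)=x$ (a single bridge, whose contraction yields the edgeless graph on one vertex with Tutte polynomial $1$), so $\chi_{C_{2}}(x,y)=x+y$, matching the formula.

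For the inductive step, assume the formula holds for $C_{n-1}$ with $n\geq 3$, and pick any edge $e$ of $C_{n}$. Since $C_{n}$ is nonseparable and $e$ is neither a bridge nor a loop, part (3) of Definition~\ref{Tuttepoly} gives
\[
\chi_{C_{n}}(x,y)=\chi_{C_{n}/e}(x,y)+\chi_{C_{n}-e}(x,y).
\]
The key structural observation is that contracting any edge of a cycle of length $n$ yields a cycle of length $n-1$, so $C_{n}/e=C_{n-1}$, and the inductive hypothesis gives $\chi_{C_{n}/e}(x,y)=x^{n-2}+x^{n-3}+\cdots+x+y$. On the other hand, $C_{n}-e$ is a path on $n$ vertices with $n-1$ edges, and its block decomposition consists of $n-1$ bridges meeting at shared cut vertices.

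To finish, I would apply part (2) of Definition~\ref{Tuttepoly} to this block decomposition and then repeatedly apply the bridge rule from part (3), together with the fact that an edgeless graph on one vertex has Tutte polynomial $1$, to conclude that $\chi_{C_{n}-e}(x,y)=x^{n-1}$. Adding the two contributions gives exactly $x^{n-1}+x^{n-2}+\cdots+x+y$, completing the induction. There is no substantive obstacle here; the only subtlety worth being careful about is that the bridge rule in part (3) is stated for nonseparable graphs, so one must first invoke the block decomposition in part (2) before iterating the bridge rule on the individual single-edge blocks of the path $C_{n}-e$.
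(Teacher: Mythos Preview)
Your proposal is correct and follows essentially the same approach the paper indicates: the paper simply asserts that the result follows ``by repeatedly applying Definition~\ref{Tuttepoly}'' and omits the details, and your inductive argument is precisely a careful execution of that repeated application of the contraction--deletion recursion. Your remark about invoking the block decomposition of part~(2) before applying the bridge rule to the path $C_n-e$ is a nice point of rigor that the paper's terse statement glosses over.
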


Recall that a graph is called \textit{planar} if it can be drawn in the plane $\mathbb{R}^{2}$ in such a way that no two edges cross each other. For a planar graph, the faces (complementary regions) are well-defined. We now define the planar dual of a planar graph. 

\begin{definition} \label{planardualdef}
The \textit{planar dual} of a planar graph $G$, denoted $G^{*}$, is the planar graph where the vertices of $G^{*}$ correspond to the faces of $G$, the faces of $G^{*}$ correspond to the vertices of $G$, and the edges of $G^{*}$ correspond to the edges of $G$ so that an edge and its planar dual intersect transversely when we overlay $G$ with its planar dual $G^{*}$. 
\end{definition}

\begin{remark} \label{planardualrmk}
Given a planar graph $G$ with planar dual $G^{*}$, it can be shown that the bridges (resp. loops) of $G^{*}$ correspond to the loops (resp. bridges) of $G$. It can also be shown that contraction and deletion reverse roles under the operation of planar duality. Thus, for $e$ an edge of $G$ and $e^{*}$ the corresponding dual edge of $G^{*}$, we have that $(G/e)^{*} \cong G^{*}-e^{*}$ and $(G-e)^{*} \cong G^{*}/e^{*}$. Furthermore, in the case that $G$ is the Tait graph associated to a checkerboard-colored link diagram $D$, it can be seen that changing the checkerboard coloring of $D$, which reverses the black and white regions of $D$, corresponds to changing from $G$ to its planar dual $G^{*}$.    
\end{remark}

The following proposition, which can be proved by applying Remark~\ref{planardualrmk} to Definition~\ref{Tuttepoly}, shows how the Tutte polynomial behaves under the operation of planar duality. 

\begin{proposition} \label{planardualprop}
If $G$ is a planar graph with planar dual $G^{*}$, then $\chi_{G^{*}}(x,y)=\chi_{G}(y,x)$. \qed
\end{proposition}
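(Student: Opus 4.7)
The plan is to induct on the number of edges of $G$, leveraging the recursive structure in Definition~\ref{Tuttepoly} together with the planar duality correspondences recorded in Remark~\ref{planardualrmk}. The base case $|E(G)|=0$ is immediate: for $G$ connected (a single vertex), $G^{*}$ is also a single vertex, so both Tutte polynomials equal $1$ and the identity $\chi_{G^{*}}(x,y) = 1 = \chi_{G}(y,x)$ holds trivially.

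For the inductive step, I would first reduce to the nonseparable case. If $G$ has a nontrivial block decomposition $G = G_{1} \vee \cdots \vee G_{k}$, then $G^{*}$ inherits a parallel block decomposition $G_{1}^{*} \vee \cdots \vee G_{k}^{*}$ (cut vertices where blocks meet in $G$ correspond, under duality, to shared face-dual vertices in $G^{*}$), and a similar statement holds for disjoint unions via part (1) of Definition~\ref{Tuttepoly}. Factoring both sides using part (2) of Definition~\ref{Tuttepoly} and applying the inductive hypothesis to each block (each having strictly fewer edges than $G$) then yields $\chi_{G^{*}}(x,y) = \prod_{i} \chi_{G_{i}^{*}}(x,y) = \prod_{i} \chi_{G_{i}}(y,x) = \chi_{G}(y,x)$.

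For a nonseparable graph $G$ with at least one edge, I would fix any edge $e$ and run through the three subcases of part (3) of Definition~\ref{Tuttepoly}, using Remark~\ref{planardualrmk} to translate bridges/loops and contraction/deletion into their duals. For instance, when $e$ is a bridge of $G$, the dual $e^{*}$ is a loop of $G^{*}$, and the bridge recursion together with the inductive hypothesis applied to $G/e$ gives $\chi_{G}(y,x) = y\cdot\chi_{G/e}(y,x) = y\cdot\chi_{(G/e)^{*}}(x,y) = y\cdot\chi_{G^{*}-e^{*}}(x,y) = \chi_{G^{*}}(x,y)$, where the last equality is the loop rule applied to $e^{*}$ in $G^{*}$. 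The loop subcase is symmetric under $(x,y)\leftrightarrow (y,x)$, and the neither-bridge-nor-loop subcase follows in the same spirit from the additive recursion $\chi_{G} = \chi_{G/e} + \chi_{G-e}$ combined with the identifications $(G/e)^{*} = G^{*}-e^{*}$ and $(G-e)^{*} = G^{*}/e^{*}$. The main obstacle I anticipate is the bookkeeping required to check that planar duality is compatible with the block and disjoint-union decompositions (in particular, keeping track of the unbounded face and how cut vertices are recovered as face-dual vertices on the dual side); once this compatibility is pinned down, the rest is a routine unwinding of Definition~\ref{Tuttepoly} and Remark~\ref{planardualrmk}.
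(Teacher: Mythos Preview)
Your approach is correct and is exactly what the paper intends: the paper states (with a \qed and no written proof) that the result ``can be proved by applying Remark~\ref{planardualrmk} to Definition~\ref{Tuttepoly},'' and your induction on $|E(G)|$ using the bridge/loop/ordinary recursion together with the duality identifications $(G/e)^{*}\cong G^{*}-e^{*}$ and $(G-e)^{*}\cong G^{*}/e^{*}$ is precisely that. Your anticipated bookkeeping about compatibility of planar duality with block decompositions is the only nontrivial point, and it is indeed routine once one tracks the unbounded face correctly.
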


\subsection{Sigma-Adequacy for Link Diagrams} \label{sigmaadeqsec}

To define the family of $\sigma$-adequate link diagrams, we first need a precise definition of a state $\sigma$ of a link diagram. 

\begin{definition} \label{statedef}
Let $D$ be a link diagram. We may either \textit{A-resolve} or \textit{B-resolve} a crossing of $D$ according to Figure~\ref{res}. A \textit{state} $\sigma$ of $D$ is a choice of A- or B-resolution at each crossing. Given a state $\sigma$ of $D$, the corresponding \textit{dual state}, denoted $\sigma^{*}$, is the state where the choices of A- and B-resolution from the state $\sigma$ are all reversed. The \textit{all-A (resp. all-B) state} of $D$, denoted $\sigma_{A}$ (resp. $\sigma_{B}$), is the state that results from choosing the A-resolution (resp. B-resolution) at each crossing of $D$. Note that $\sigma_{A}$ and $\sigma_{B}$ are dual states.   
\end{definition}

Given a state of a checkerboard-colored link diagram we define, in the remark below, four types of edges of the associated Tait graph. 

\begin{remark} \label{edgelabel}
Recall, from Definition~\ref{Tait}, that the edges of the Tait graph $G$ associated to a checkerboard-colored link digram $D$ are labeled with either $+$ or $-$. Fix a state $\sigma$ of $D$. Since either the A- or B-resolution has been chosen at each crossing of $D$ and since the crossings of $D$ correspond to the edges of $G$, then we can label the edges of $G$ with either $A$ or $B$. Therefore, given a state of a checkerboard-colored link digram, the associated Tait graph has edges that can be labeled $(+,A)$, $(+,B)$, $(-,A)$, or $(-,B)$. 
\end{remark}

\begin{figure}
\centering
\def\svgwidth{3in}
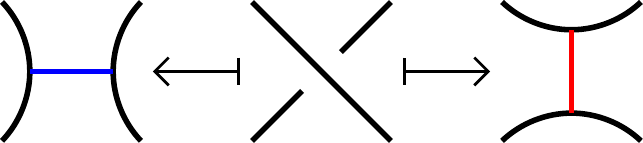
\caption{The A- and B-resolutions at a crossing of a link diagram $D$. The A-segment is colored red and the B-segment is colored blue. For grayscale versions of this paper, the A-segments will appear light gray and the B-segments will appear dark gray.}
\label{res}
\end{figure}
 
\begin{figure}
\centering
\def\svgwidth{3in}
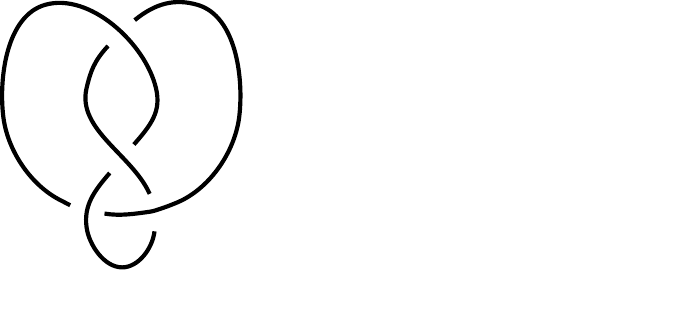
\caption{The standard diagram $D$ of the figure-8 knot (left) and its all-A state graph $H_{A}$ (right).}
\label{stategraph}
\end{figure}

\begin{definition} \label{trivalent}
By resolving the crossings of a link diagram $D$ according to a state $\sigma$, we form the trivalent \textit{state graph} $H_{\sigma}$ that consists of a disjoint collection of \textit{state circles} and a disjoint collection of \textit{state segments} (either \textit{A-segments} or \textit{B-segments}) that are used to record the locations and types of the crossing resolutions. Let $H_{A}$ (resp. $H_{B}$) denote the state graph that arises from the all-A (resp. all-B) state. For an example of a link diagram and its all-A state graph, see Figure~\ref{stategraph}. Note that we may sometimes suppress the labels of A and B on the state segments of $H_{\sigma}$.
\end{definition}

We now define what it means for a link diagram to be $\sigma$-, A-, B-, and semi-adequate. 

\begin{definition} \label{adeq}
A link diagram $D$ is called \textit{$\sigma$-adequate with respect to a state $\sigma$} if its state graph $H_{\sigma}$ contains no state segments that connect a state circle to itself. We call such a state a \textit{$\sigma$-adequate state} of $D$. As a special case, $D$ is called \textit{A-adequate} (resp. \textit{B-adequate}) if $H_{A}$ (resp. $H_{B}$) contains no state state segments that connect a state circle to itself and $D$ is called \textit{semi-adequate} if it is either A- or B-adequate. For an example of an A-adequate link diagram, see Figure~\ref{stategraph}. Note that $D$ is A-adequate if and only if its mirror reflection $\overline{D}$ is B-adequate.  
\end{definition}

Two important states of a link diagram are its \textit{checkerboard states}. 

\begin{definition} \label{checkerboardstatesdef}
Given a checkerboard-colored link diagram $D$, the \textit{black (resp. white) checkerboard state} of $D$, denoted $\sigma_{bl}$ (resp. $\sigma_{wh}$), is the state of $D$ where the crossings of $D$ are resolved so that the state circles of $H_{\sigma_{bl}}$ (resp.  $H_{\sigma_{wh}}$) are, up to minor perturbations, the boundaries of the black (resp. white) regions of $D$. Note that $\sigma_{bl}$ and $\sigma_{wh}$ are dual states. 
\end{definition}

As will be shown below, the black and white checkerboard states of a reduced link diagram are always $\sigma$-adequate states. 

\begin{proposition} \label{checkerrmk}
Every reduced link diagram $D$ has at least two $\sigma$-adequate states, namely the black and white checkerboard states coming from a checkerboard coloring of $D$. Furthermore, $D$ is alternating if and only if the two checkerboard states, $\sigma_{bl}$ and $\sigma_{wh}$, of $D$ coincide with the all-A and all-B states, $\sigma_{A}$ and $\sigma_{B}$, of $D$.
\end{proposition}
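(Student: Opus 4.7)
The plan is to reduce both claims to structural properties of the Tait graph $G$, using the bijective correspondence between state segments of the checkerboard states of $D$ and edges of $G$, together with the sign convention of Definition~\ref{Tait}.

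For the first claim, I would argue that a state segment of $H_{\sigma_{bl}}$ connects a state circle to itself if and only if the corresponding edge of $G$ is a loop. The forward direction follows because, by Definition~\ref{checkerboardstatesdef}, the state circles of $H_{\sigma_{bl}}$ are (up to perturbation) the boundaries of the black regions of $D$; a segment at a crossing $c$ thus connects a single state circle to itself exactly when one black region meets $c$ twice, which by Definition~\ref{Tait} is the condition for the edge of $G$ corresponding to $c$ to be a loop. Since $D$ is reduced, Remark~\ref{nugatoryrmk}(4) guarantees that $G$ has no loops, so no state segment of $H_{\sigma_{bl}}$ connects a state circle to itself, and $\sigma_{bl}$ is $\sigma$-adequate. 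The argument for $\sigma_{wh}$ is symmetric: one either repeats it with the roles of black and white reversed, or invokes Remark~\ref{planardualrmk} to identify $\sigma_{wh}$ as the black checkerboard state for the reverse coloring (whose Tait graph is $G^{*}$), and then uses that $G$ has no bridges to conclude $G^{*}$ has no loops. Since $\sigma_{bl}$ and $\sigma_{wh}$ are dual states and there is at least one crossing, they are distinct, giving at least two $\sigma$-adequate states.

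For the second claim, I would use the characterization from Remark~\ref{Taitrmk} that $D$ is alternating if and only if every edge of $G$ carries the same sign. The key auxiliary fact I need to isolate is: under the convention of Figure~\ref{Taitgraph}, a crossing of $D$ corresponds to a $+$ edge of $G$ if and only if its A-resolution (as depicted in Figure~\ref{res}) coincides with its black checkerboard resolution, and to a $-$ edge if and only if its B-resolution is the black one. Given this fact, the forward direction is immediate: if all edges of $G$ are $+$, then at every crossing the A-resolution is the black resolution, so $\sigma_{bl}=\sigma_{A}$ and, dually, $\sigma_{wh}=\sigma_{B}$; the all-$-$ case yields the opposite assignment. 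For the converse, if $\sigma_{bl}\in\{\sigma_{A},\sigma_{B}\}$, then the type of the black resolution is uniform across all crossings of $D$, which forces every edge sign of $G$ to agree, so $D$ is alternating.

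The main obstacle is purely the bookkeeping needed to pin down the sign convention in Figure~\ref{Taitgraph} precisely enough to verify the key auxiliary fact in the second paragraph. Once that convention is unambiguously tied to the A- and B-resolution pictures of Figure~\ref{res} and to the black/white checkerboard resolutions of Definition~\ref{checkerboardstatesdef}, both directions of each equivalence reduce to a single-crossing check, and the global statements follow since every crossing (equivalently every edge of $G$) is handled independently.
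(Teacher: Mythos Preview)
Your proposal is correct and follows essentially the same approach as the paper: for the first claim you identify bad state segments in $H_{\sigma_{bl}}$ (resp.\ $H_{\sigma_{wh}}$) with loops of $G$ (resp.\ $G^{*}$) and invoke reducedness, exactly as the paper does; for the second claim the paper simply points to Figure~\ref{checkerstate}, which encodes precisely the local crossing-by-crossing sign/resolution check you spell out in more detail. One small caveat: under the paper's conventions (see Proposition~\ref{signlemma}) the black checkerboard state makes every edge a $(+,B)$ or $(-,A)$ edge, so a $+$ edge has its $B$-resolution, not its $A$-resolution, as the black one---the reverse of what you wrote---but you rightly flag this as the bookkeeping to be verified, and since the proposition does not specify which of $\sigma_{bl},\sigma_{wh}$ matches which of $\sigma_{A},\sigma_{B}$, the swap is harmless.
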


\begin{proof} Since $D$ is reduced, then, by Remark~\ref{nugatoryrmk} and Remark~\ref{planardualrmk}, neither its Tait graph $G$ nor its planar dual $G^{*}$ contain loops. By Definition~\ref{Tait}, Definition~\ref{planardualdef}, and Definition~\ref{checkerboardstatesdef}, the vertices of $G$ (resp. $G^{*}$) correspond to the black (resp. white) regions of $D$, which correspond to the state circles of the black (resp. white) checkerboard state graph $H_{\sigma_{bl}}$ (resp. $H_{\sigma_{wh}}$). Moreover, the edges of $G$ (resp. $G^{*}$) correspond to the crossings of $D$, which correspond to the state segments of $H_{\sigma_{bl}}$ (resp. $H_{\sigma_{wh}}$). Therefore, loops of $G$ (resp. $G^{*}$) correspond to state segments of $H_{\sigma_{bl}}$ (resp. $H_{\sigma_{wh}}$) that connect a state circle to itself. Thus, by Definition~\ref{adeq}, we have the first desired result. The second desired result follows from considering Figure~\ref{checkerstate}.  
\end{proof}

\subsection{The Kauffman and Boundary Term Polynomials}

Let $K$ be an oriented link and let $D$ be a diagram of $K$ with writhe $w(D)$. The \textit{Kauffman polynomial}, $F_{K}(a, z)$, of $K$ is an invariant defined by $F_{K}(a, z)=a^{-w(D)}\cdot \Lambda_{D}(a, z)$, where $\Lambda_{D}=\Lambda_{D}(a, z)$ is the \textit{unnormalized Kauffman polynomial} defined recursively as follows. 
\begin{enumerate}
\item[(1)] $\Lambda_{D}=1$ if $D$ is the trivial diagram of the unknot. 
\item[(2)] $\Lambda_{D_{+1}}=a\cdot\Lambda_{D}$ and $\Lambda_{D_{-1}}=a^{-1}\cdot\Lambda_{D}$, where $D_{+1}$ (resp. $D_{-1}$) denotes the result of applying Reidermeister move R1 to $D$ to increase the number of crossings of $D$ by one and increase (resp. decrease) $w(D)$ by one. 
\item[(3)] $\Lambda_{D}$ is invariant under Reidermeister moves R2 and R3. 
\item[(4)] $\Lambda_{D}$ satisfies the skein relation $\Lambda_{D_{+}}+\Lambda_{D_{-}}=z\cdot\left(\Lambda_{D_{\infty}}+\Lambda_{D_{0}}\right)$, where $D_{+}$, $D_{-}$, $D_{\infty}$, and $D_{0}$ denote the diagrams defined locally in Figure~\ref{skein}. 
\end{enumerate}


\begin{figure} 
\centering
\def\svgwidth{4in}
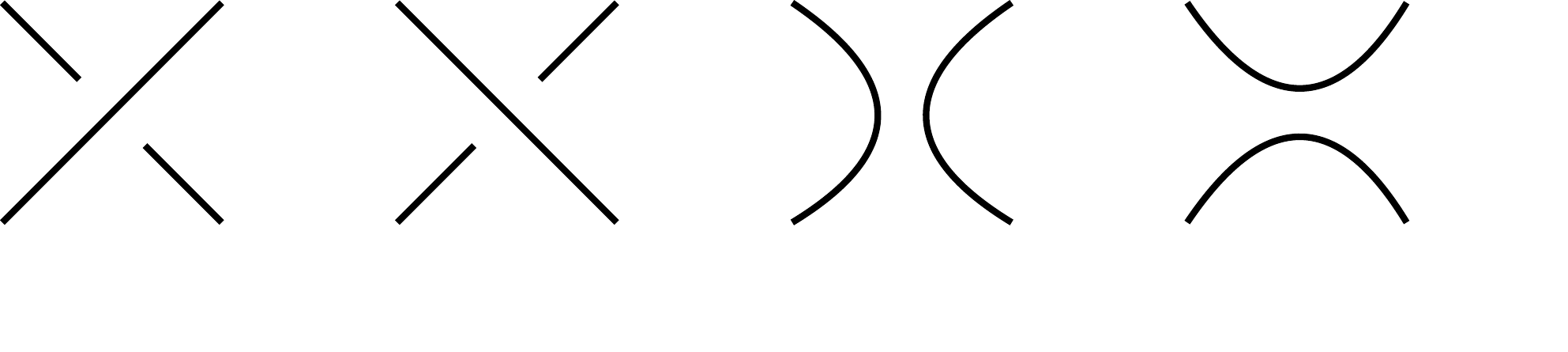
\caption{The diagrams $D_{+}$, $D_{-}$, $D_{\infty}$, and $D_{0}$ involved in the skein relation for the unnormalized Kauffman polynomial. These four diagrams agree outside of the local regions depicted above.}
\label{skein}
\end{figure}

Let $D$ be a link diagram with $n$ crossings. Expanding $\Lambda_{D}(a, z)$ as $\displaystyle \Lambda_{D}(a, z)=\sum_{r, s}\ u_{rs}\cdot a^{r}z^{s}$, Thistlethwaite shows that $u_{rs}\neq 0$ only if $\left|r\right|+s\leq n$ (Theorem 4 of \cite{Thistlethwaite3}). 

\begin{definition} \label{KP}
Call a coefficient $u_{rs}$ of $\Lambda_{D}(a, z)$ \textit{outermost} if either $r+s=n$ or $-r+s=n$. From the outermost coefficients of $\Lambda_{D}(a, z)$, we can construct the \textit{boundary term polynomials} $$\phi_{D}^{+}(t)=\sum_{i\geq 0}\ u_{i, n-i}\cdot t^{i}\ \ \ \ \ \text{and}\ \ \ \ \ \phi_{D}^{-}(t)=\sum_{i\geq 0}\ u_{-i, n-i}\cdot t^{i}.$$ 
\end{definition}



In \cite{Thistlethwaite2}, Thistlethwaite expresses each of the two boundary term polynomials $\phi_{D}^{+}(t)$ and $\phi_{D}^{-}(t)$ as a product of two Tutte polynomials and shows that $\phi_{D}^{+}(t)$ (resp. $\phi_{D}^{-}(t)$) is nonvanishing if and only if $D$ is A-adequate (resp. B-adequate).  

\begin{proposition}{(Theorem 1 and Theorem 3 of \cite{Thistlethwaite2})} \label{nonvanishing}
Let $D$ be a connected checkerboard-colored link diagram with at least one crossing and with associated Tait graph $G$. Let $E_{+}$ denote the set of positive edges of $G$ and let $E_{-}$ denote the set of negative edges of $G$. Then 
\begin{enumerate}
\item[(1)] $\phi_{D}^{+}(t)=\chi_{G|E_{+}}(0, t)\cdot\chi_{G/E_{+}}(t, 0)$ and $\phi_{D}^{-}(t)=\chi_{G|E_{-}}(0, t)\cdot\chi_{G/E_{-}}(t, 0)$.
\item[(2)] $\displaystyle \phi_{D}^{+}(t) \neq 0$ if and only if $D$ is A-adequate and $\displaystyle \phi_{D}^{-}(t) \neq 0$ if and only if $D$ is B-adequate. \qed 
\end{enumerate}
\end{proposition}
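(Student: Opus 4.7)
My plan is to derive both parts of the proposition from a single state-sum expansion of the unnormalized Kauffman polynomial, extracting the outermost coefficients, recognizing them as products of Tutte polynomial evaluations, and translating the nonvanishing of each factor into the A-adequacy condition via the Tait graph dictionary.

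First, I would expand $\Lambda_D(a,z)$ as a state sum. Rewriting the skein relation (4) as $\Lambda_{D_+}=z\Lambda_{D_\infty}+z\Lambda_{D_0}-\Lambda_{D_-}$ and applying it together with the R1-rule (2) iteratively at each of the $n$ crossings produces an expression for $\Lambda_D$ as a sum over states $\sigma$ of $D$, where each state contributes a monomial in $a$ and $z$ times a factor recording the number of state circles in $H_\sigma$ (this factor comes from the standard $(-a-a^{-1})$-type weight of each disjoint circle after R1 reductions). The outermost terms contributing to $\phi_D^+(t)$ are those with $r+s=n$; tracking $a$-degrees through the R1 rule applied to each resolved diagram shows that they arise only from extremal states, specifically from the all-A state together with local perturbations in which a subset $S\subseteq E_+$ of the positive edges is switched from its A- to its B-resolution, each such switch costing exactly one unit of $a$-degree and contributing one power of $z$ (equivalently, of $t$ after passing to $\phi_D^+$).

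Second, I would recognize the resulting subset sum as a product of Tutte evaluations. Since the all-A state circles of $H_A$ are parameterized by the Tait-graph data through the sign convention of Definition~\ref{Tait}, and since switching resolutions at $+$ edges only modifies the circle count coming from $G|E_+$ while leaving the contribution associated to $E_-$ unchanged, the subset sum $\sum_{S\subseteq E_+}t^{|S|}w(S)$ splits as a product of a sum over $S\subseteq E_+$ and an independent contribution controlled by the contracted graph $G/E_+$. By induction on $|E_+|$ using the contraction-deletion recursion of Definition~\ref{Tuttepoly}, the first factor matches $\chi_{G|E_+}(0,t)$ and the second matches $\chi_{G/E_+}(t,0)$. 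The formula for $\phi_D^-(t)$ follows by applying the same argument to the mirror diagram $\overline{D}$, whose Tait graph has all edge signs reversed, so that $E_+$ and $E_-$ exchange roles.

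Finally, for part (2), I would use the standard facts, provable by induction from Definition~\ref{Tuttepoly}, that $\chi_H(0,t)\neq 0$ iff $H$ is bridgeless and $\chi_H(t,0)\neq 0$ iff $H$ is loopless. Under the Tait-graph dictionary of Remark~\ref{nugatoryrmk} applied to the subgraphs $G|E_+$ and $G/E_+$, bridgelessness of $G|E_+$ says that every $+$ edge lies on a cycle of $G|E_+$, which corresponds to no A-segment of $H_A$ joining a state circle to itself; and looplessness of $G/E_+$ says that no $-$ edge has both endpoints on a single component of $G|E_+$, which corresponds to no B-segment of $H_A$ joining a state circle to itself. Together these are exactly the A-adequacy condition of Definition~\ref{adeq}, and the B-adequacy case is dual. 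The main obstacle I anticipate is the sign-and-weight bookkeeping in the state-sum step, particularly verifying the cancellations that restrict the outermost coefficients to the extremal states described above; once that is in hand, the remaining steps reduce to routine applications of Tutte polynomial subset-sum identities.
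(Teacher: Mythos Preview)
The paper does not prove this proposition: it is quoted verbatim from Thistlethwaite's work and marked with \qed, so there is no in-paper argument to compare your proposal against. Your outline is in the spirit of Thistlethwaite's original approach, and your identification of the main obstacle (the bookkeeping in the state-sum step) is accurate---that is indeed where essentially all of the work lies.

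That said, two points in the proposal need repair. First, the expansion you write, $\Lambda_{D_+}=z\Lambda_{D_\infty}+z\Lambda_{D_0}-\Lambda_{D_-}$, does not by itself produce a sum over $A/B$-resolution states: the term $-\Lambda_{D_-}$ is a crossing change, not a resolution, so iterating this relation yields a sum over much more complicated configurations (mixtures of resolutions and crossing switches) rather than the Kauffman-bracket style states you then invoke. Getting from here to a clean subset sum over $S\subseteq E_+$ requires the additional combinatorial analysis that constitutes the real content of Thistlethwaite's argument, and your sketch does not indicate how that reduction would go.

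Second, in your argument for part (2) you write that looplessness of $G/E_+$ ``corresponds to no B-segment of $H_A$ joining a state circle to itself.'' There are no B-segments in $H_A$; every segment is an A-segment. What you mean is that the A-segments coming from the $(-,A)$ edges (those in $\overline{E_{\sigma_A}}=E_-$) do not join a state circle to itself. With that correction your translation matches exactly the characterization the paper later proves in Proposition~\ref{circleprop}(2) and Theorem~\ref{introthm}, so this part of your plan is sound once the language is fixed.
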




\section{Extending to the Sigma-Adequate Case} \label{extending}

In this section, we define the polynomials $\phi_{D}^{\sigma}(t)$ that generalize Thistlethwaite's boundary term polynomials, we prove Theorem~\ref{introthm}, Theorem~\ref{mainthm1}, Theorem~\ref{mainthm2}, and Theorem~\ref{maincor1}, and we provide a method for finding all of the $\sigma$-adequate states of a link diagram. 

\subsection{Generalizing the Boundary Term Polynomials} 

To begin, we define the polynomials that extend Thistlethwaite's boundary term polynomials to every state of a link diagram. 

\begin{definition} \label{phi}
Let $D$ be a checkerboard-colored link diagram with associated Tait graph $G$ and let $\sigma$ be a state of $D$. Using Remark~\ref{edgelabel}, let $E_{+}^{A}$ denote the $(+,A)$ edges of $G$, let $E_{-}^{B}$ denote the $(-,B)$ edges of $G$, and let $E_{\sigma} \subseteq E(G)$ denote the edge set $E_{\sigma}=E_{+}^{A} \cup E_{-}^{B}$. Given the edge set $E_{\sigma}$, define the polynomial $\phi_{D}^{\sigma}(t)$ to be $$\phi_{D}^{\sigma}(t)=\chi_{G|{E_{\sigma}}}(0,t) \cdot \chi_{G/{E_{\sigma}}}(t,0).$$ Furthermore, if we let $E_{+}^{B}$ denote the $(+,B)$ edges of $G$ and let $E_{-}^{A}$ denote the $(-,A)$ edges of $G$, then the complement $\overline{E_{\sigma}}=E(G)-E_{\sigma}$ of the edge set $E_{\sigma}$ can be defined as $\overline{E_{\sigma}}=E_{+}^{B} \cup E_{-}^{A}$. Note that $\overline{E_{\sigma}}=E_{\sigma^{*}}$, where $\sigma^{*}$ is the state dual to the state $\sigma$. 
\end{definition}

If we apply the definition of $\phi_{D}^{\sigma}(t)$ above to the all-A and all-B states of $D$, then we get the boundary term polynomials introduced by Thistlethwaite in \cite{Thistlethwaite2}. 

\begin{remark} \label{phisignrmk}
By Definition~\ref{phi}, if $\sigma_{A}$ is the all-A state of a link diagram $D$, then $$\phi_{D}^{\sigma_{A}}(t)=\chi_{G|{E_{\sigma_{A}}}}(0,t) \cdot \chi_{G/{E_{\sigma_{A}}}}(t,0)=\chi_{G|{E_{+}}}(0,t) \cdot \chi_{G/{E_{+}}}(t,0)=\phi_{D}^{+}(t).$$ Similarly, if $\sigma_{B}$ is the all-B state of $D$, then $\phi_{D}^{\sigma_{B}}(t)=\phi_{D}^{-}(t)$. 
\end{remark}


In the following proposition, we apply Definition~\ref{phi} to find the edge subsets $E_{\sigma}$ corresponding the two checkerboard states of a link diagram. 
 
\begin{proposition} \label{signlemma}
Let $D$ be a connected, reduced, checkerboard-colored link diagram with at least one crossing and with associated Tait graph $G$. Then, given the black checkerboard state $\sigma_{bl}$ of $D$, every edge of $G$ is either a $(+,B)$ edge or a $(-,A)$ edge of $G$ and, given the white checkerboard state $\sigma_{wh}$ of $D$, every edge of $G$ is either a $(+,A)$ edge or a $(-,B)$ edge of $G$. Thus, we have that $E_{\sigma_{bl}} = \emptyset$ and $E_{\sigma_{wh}} = E(G)$. Consequently, we get that $\phi_{D}^{\sigma_{bl}}(t)=\chi_{G}(t,0)$ and $\phi_{D}^{\sigma_{wh}}(t)=\chi_{G}(0,t)$.    
\end{proposition}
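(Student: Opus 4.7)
The plan is to first verify the edge classifications directly from the local geometric conventions, and then to deduce the two polynomial identities as formal consequences of Definition~\ref{phi} applied to degenerate edge subsets.

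For the classification under $\sigma_{bl}$: at each crossing of $D$, the edge-sign convention from Definition~\ref{Tait} (the center of Figure~\ref{Taitgraph}) and the resolution convention from Definition~\ref{checkerboardstatesdef} together determine a single local picture. By Definition~\ref{checkerboardstatesdef}, the black checkerboard resolution at a crossing is the unique A- or B-resolution whose state circles follow the boundary of the black regions. A direct inspection at one crossing shows that at a $+$ crossing this is the B-resolution, while at a $-$ crossing this is the A-resolution; this pairing is exactly what is displayed in Figure~\ref{checkerstate} (top right), where every $+$ edge is labeled $B$ and every $-$ edge is labeled $A$ under $\sigma_{bl}$. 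Hence every edge of $G$ is either $(+,B)$ or $(-,A)$. Since $\sigma_{wh}=\sigma_{bl}^{*}$ by Definition~\ref{checkerboardstatesdef}, applying Definition~\ref{statedef} reverses the A/B label of every edge, so under $\sigma_{wh}$ every edge is either $(+,A)$ or $(-,B)$. Applying Definition~\ref{phi} now gives $E_{\sigma_{bl}}=E_{+}^{A}\cup E_{-}^{B}=\emptyset$ and $E_{\sigma_{wh}}=E_{+}^{A}\cup E_{-}^{B}=E(G)$.

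For the polynomial consequences, substitute these edge subsets into $\phi_{D}^{\sigma}(t)=\chi_{G|E_{\sigma}}(0,t)\cdot\chi_{G/E_{\sigma}}(t,0)$. In the black case, $G|\emptyset$ is the edgeless spanning subgraph of $G$, so by parts~(1) and~(3) of Definition~\ref{Tuttepoly} its Tutte polynomial equals $1$; and $G/\emptyset=G$, so $\phi_{D}^{\sigma_{bl}}(t)=1\cdot\chi_{G}(t,0)=\chi_{G}(t,0)$. In the white case, $G|E(G)=G$, and since $G$ is connected (Remark~\ref{nugatoryrmk}) the contraction $G/E(G)$ is a single vertex whose Tutte polynomial is $1$, giving $\phi_{D}^{\sigma_{wh}}(t)=\chi_{G}(0,t)\cdot 1=\chi_{G}(0,t)$.

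The only real obstacle is the local verification in the first paragraph: one must unambiguously read off the sign/resolution pairing from the conventions of Figure~\ref{Taitgraph} and Figure~\ref{checkerstate} in order to be certain that the black checkerboard state produces $(+,B)$ and $(-,A)$ edges (as opposed to the reversed pairing). Once that single local check is made, everything else is bookkeeping: passing from the edge-type classification to $E_{\sigma_{bl}}$ and $E_{\sigma_{wh}}$ is immediate from Definition~\ref{phi}, and the two Tutte polynomial identities reduce to the trivial base cases of the edgeless graph and the single vertex.
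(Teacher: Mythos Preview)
Your proposal is correct and follows essentially the same approach as the paper: both arguments read off the $(+,B)/(-,A)$ pairing for $\sigma_{bl}$ from the local picture in Figure~\ref{checkerstate}, deduce $E_{\sigma_{bl}}=\emptyset$, and then evaluate $\phi_{D}^{\sigma_{bl}}$ using the trivial Tutte polynomial of the edgeless graph. The only cosmetic difference is that for $\sigma_{wh}$ the paper says ``similar'' (i.e., redo the local check), whereas you invoke the duality $\sigma_{wh}=\sigma_{bl}^{*}$ to flip the A/B labels; both are valid and amount to the same verification.
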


\begin{proof}
Given the Tait graph $G$ and the black checkerboard state $\sigma_{bl}$, Remark~\ref{edgelabel} allows us to classify the edges of $G$ as $(+,A)$, $(+,B)$, $(-,A)$, or $(-,B)$ edges. By considering the local structure of the link diagram $D$, the black checkerboard state graph $H_{\sigma_{bl}}$, and the associated Tait graph $G$, we get that the edges of $G$ are either $(+,B)$ or $(-,A)$ edges. See Figure~\ref{checkerstate}. Therefore, by Definition~\ref{phi}, we have that $E_{\sigma_{bl}} = \emptyset$. Since $G|{E_{\sigma_{bl}}}$ contains no edges and since $G/{E_{\sigma_{bl}}}=G$, then, by Definition~\ref{phi} and Definition~\ref{Tuttepoly}, we get that $\phi_{D}^{\sigma_{bl}}(t)=\chi_{G|{E_{\sigma_{bl}}}}(0,t) \cdot \chi_{G/{E_{\sigma_{bl}}}}(t,0)=1 \cdot \chi_{G}(t,0) = \chi_{G}(t,0)$. The proof for the white checkerboard state $\sigma_{wh}$ is similar.  
\end{proof}

We now establish two properties of the polynomial $\phi_{D}^{\sigma}(t)$ that will be used later in this paper. The proposition below generalizes Corollary~1.1 of \cite{Thistlethwaite2}. It should be noted that the three additional properties that appear in Corollary~1.1 of \cite{Thistlethwaite2} can also be generalized to apply to the polynomial $\phi_{D}^{\sigma}(t)$. 

\begin{proposition} \label{phiprop}
Let $D$ be a connected checkerboard-colored link diagram with $n \geq 1$ crossings and with associated Tait graph $G$. Then the polynomial $\phi_{D}^{\sigma}(t)$ corresponding to a state $\sigma$ of $D$ has the following properties. 
\begin{enumerate}
	\item[(1)] The coefficients of $\phi_{D}^{\sigma}(t)$ are nonnegative. 
	\item[(2)] $\phi_{D}^{\sigma}(t) \neq 0$ if and only if $G|E_{\sigma}$ contains no bridges and $G/E_{\sigma}$ contains no loops.
\end{enumerate}
\end{proposition}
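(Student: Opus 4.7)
The plan is to deduce both parts from the rank-nullity (subgraph) expansion of the Tutte polynomial,
$$
\chi_{H}(x,y)=\sum_{A\subseteq E(H)}(x-1)^{r(E)-r(A)}(y-1)^{|A|-r(A)},
$$
together with the product structure of $\phi_{D}^{\sigma}(t)$. For part (1), the plan is to invoke nonnegativity of the Tutte polynomial's coefficients: an easy induction on $|E(H)|$ using Definition~\ref{Tuttepoly} shows $\chi_{H}(x,y)\in\mathbb{Z}_{\geq 0}[x,y]$ for every graph $H$, since each recursive step either multiplies by $x$ or by $y$ or adds two nonnegative polynomials, and the base case equals $1$. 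Specializing at $x=0$ (resp.\ $y=0$) yields nonnegative coefficients in $t$, and the product of the two resulting polynomials inherits this nonnegativity, which proves (1).

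For part (2), the nonnegativity just established implies that $\phi_{D}^{\sigma}(t)=\chi_{G|E_{\sigma}}(0,t)\cdot\chi_{G/E_{\sigma}}(t,0)$ vanishes if and only if at least one of its two factors vanishes, since no cancellation is possible in $\mathbb{Z}_{\geq 0}[t]$. The problem therefore reduces to the following two sublemmas about an arbitrary graph $H$: (A) $\chi_{H}(0,t)\neq 0$ if and only if $H$ has no bridges, and (B) $\chi_{H}(t,0)\neq 0$ if and only if $H$ has no loops. The ``only if'' directions of both follow immediately from block decomposition via Definition~\ref{Tuttepoly}(2): a bridge $e$ is a block with Tutte polynomial $x$, so $\chi_{H}(x,y)$ acquires an $x$-factor and $\chi_{H}(0,t)=0$; dually, a loop $e$ is a block with Tutte polynomial $y$, forcing $\chi_{H}(t,0)=0$.

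For the ``if'' directions, the plan is to exhibit an explicit nonvanishing top-degree coefficient in the subgraph expansion above. For (B), when $H$ is loopless the only $A\subseteq E(H)$ with $r(A)=0$ is $A=\emptyset$, so this term alone contributes $(t-1)^{r(E)}$ to $\chi_{H}(t,0)$ at top degree, making $\chi_{H}(t,0)$ monic of degree $r(E)$ in $t$ and hence nonvanishing. For (A), the subsets $A$ maximizing the nullity $|A|-r(A)$ are precisely those for which $E\setminus A$ forms a spanning tree of the contracted graph $H/A$; since contraction preserves the non-bridge status of the remaining edges, when $H$ is bridgeless the only such $A$ is $A=E$, forcing the coefficient of $t^{|E|-r(E)}$ in $\chi_{H}(0,t)$ to equal $1$. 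Applying (A) to $H=G|E_{\sigma}$ and (B) to $H=G/E_{\sigma}$ then yields part (2). The main obstacle will be the combinatorial bookkeeping for (A), in particular verifying that the extremal subsets are parametrized as claimed and that bridgelessness is preserved under contraction of the remaining edges; part (B) is much more direct because the extremal-subset analysis collapses immediately in the loopless case.
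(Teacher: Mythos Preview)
Your argument is correct. Part (1) is handled identically to the paper. For part (2), the forward direction (``only if'') matches the paper's contrapositive, but for the reverse direction (``if'') you take a genuinely different route: you work directly with the rank--nullity expansion of $\chi_H$ and isolate the top-degree term in each factor, showing it has coefficient $1$ whenever the relevant obstruction (bridge, resp.\ loop) is absent. The paper instead argues by contradiction, citing Proposition~2(ii) of \cite{Thistlethwaite1} as a black box for the bridge-free, loop-free case and then reducing to that case by stripping loops from $G|E_\sigma$ (resp.\ bridges from $G/E_\sigma$) and pulling out the resulting powers of $t$.

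Your approach is more self-contained, since it does not require the external reference, and it actually yields sharper information: the leading coefficients of $\chi_{G|E_\sigma}(0,t)$ and $\chi_{G/E_\sigma}(t,0)$ are both $1$ in the $\sigma$-adequate case. One small phrasing issue: in your sublemma (A) you describe the extremal subsets as those for which ``$E\setminus A$ forms a spanning tree of $H/A$''; since $G|E_\sigma$ need not be connected, the accurate statement is that $H/A$ is a \emph{forest} (equivalently, $n_{H/A}(E\setminus A)=0$). Your subsequent use of ``contraction preserves the non-bridge status of the remaining edges'' is exactly right and already forces $A=E$ in the bridgeless case, so the conclusion is unaffected.
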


\begin{proof}[Proof of Property (1)]
By Definition~\ref{Tuttepoly}, it can be seen that the coefficients of any Tutte polynomial are nonnegative. Hence, by Definition~\ref{phi}, the coefficients of $\phi_{D}^{\sigma}(t)$ are nonnegative.  
\end{proof}

\begin{proof}[Proof of Property (2)] 
($\Rightarrow$) Proceed by contraposition. If $G|{E_{\sigma}}$ contains a bridge, then, by Definition~\ref{Tuttepoly}, we have that $\chi_{G|{E_{\sigma}}}(0,t)=0$. If $G/{E_{\sigma}}$ contains a loop, then, by Definition~\ref{Tuttepoly}, we have that $\chi_{G/{E_{\sigma}}}(t,0)=0$. In either case, by Definition~\ref{phi}, we get that $\phi_{D}^{\sigma}(t)=0$.

($\Leftarrow$) Proceed by contradiction. Assume that $G|{E_{\sigma}}$ contains no bridges, that $G/{E_{\sigma}}$ contains no loops, and that $\phi_{D}^{\sigma}(t)=0$. Therefore, by Definition~\ref{phi}, either $\chi_{G|{E_{\sigma}}}(0,t)=0$ or $\chi_{G/{E_{\sigma}}}(t,0)=0$. Suppose $\chi_{G|{E_{\sigma}}}(0,t)=0$. Then, by the contrapositive of Proposition~2(ii) of \cite{Thistlethwaite1}, we can conclude that either $G|{E_{\sigma}}$ contains a bridge or $G|E_{\sigma}$ contains a loop. Since $G|E_{\sigma}$ contains no bridges by assumption, then $G|{E_{\sigma}}$ contains a loop. If $E_{L}$ denotes the set of loops of $G|{E_{\sigma}}$, then, by Definition~\ref{Tuttepoly}, we have that $\chi_{G|{E_{\sigma}}}(0,t)=t^{|E_{L}|} \cdot \chi_{\left(G|E_{\sigma}\right)-E_{L}}(0,t)$. But then $\left(G|E_{\sigma}\right)-E_{L}$ contains neither bridges nor loops. Therefore, by Proposition~2(ii) of \cite{Thistlethwaite1}, we have that $\chi_{\left(G|E_{\sigma}\right)-E_{L}}(0,t)~\neq~0$, which implies that $\chi_{G|{E_{\sigma}}}(0,t) \neq 0$, a contradiction. The proof of the case when $\chi_{G/E_{\sigma}}(t,0)=0$ is similar. 
\end{proof}

\subsection{A Tait Graph Perspective on Sigma-Adequacy}

Our goal is now to show that the $\sigma$-adequacy of a connected checkerboard-colored link diagram $D$ can be detected from properties of the edge-restricted Tait graph $G|E_{\sigma}$ and the edge-contracted Tait graph $G/E_{\sigma}$. 

\begin{notation}
For $\sigma$ a state of a link diagram $D$, let $\left|s_{\sigma}(D)\right|$ denote the number of state circles in the state graph $H_{\sigma}$. 
\end{notation}

In the proposition below, we provide an alternate definition of $\sigma$-adequacy for a link diagram. 

\begin{proposition} \label{adeqaltprop}
A link diagram $D$ is $\sigma$-adequate with respect to a state $\sigma$ if and only if we have $\left|s_{\sigma}(D_{c}')\right|<\left|s_{\sigma}(D)\right|$, where $D_{c}'$ is the link diagram obtained from $D$ by changing the crossing type of a single crossing $c$ of $D$. 
\end{proposition}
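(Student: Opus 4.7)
The plan is to reduce the comparison of $|s_\sigma(D_c')|$ and $|s_\sigma(D)|$ to a purely local analysis at the crossing $c$. First I would observe that the geometric meaning of the labels A and B at a crossing depends on which strand passes over: swapping the over- and under-strands at $c$ swaps the A- and B-smoothings at that crossing. Consequently, the state graph $H_\sigma(D_c')$ is obtained from $H_\sigma(D)$ by replacing the smoothing at $c$ with its opposite, while the rest of the diagram and all other resolutions are left unchanged. In particular, the two state graphs agree outside a small disk around $c$, and only the two local arcs at $c$ are rerouted.

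Next I would analyze how flipping a single smoothing affects the number of state circles. Temporarily delete all state segments and view the state circles of $H_\sigma(D)$ as a disjoint family of simple closed curves in the plane. Inside a small disk around $c$, these curves consist of two arcs pairing up the four boundary points of the disk; flipping the smoothing replaces these two arcs by the unique complementary non-crossing pairing of the same four points. A standard planar surgery argument then shows that this operation either merges two distinct state circles into a single one or splits a single state circle into two, and the two possibilities are distinguished precisely by whether the two original local arcs belong to two different state circles or to the same state circle. Since the state segment at $c$ is a short transverse arc joining the two local arcs, it follows that this state segment joins a state circle to itself exactly in the split case. Hence $|s_\sigma(D_c')| = |s_\sigma(D)| - 1$ when the state segment at $c$ connects two distinct state circles, and $|s_\sigma(D_c')| = |s_\sigma(D)| + 1$ when the state segment at $c$ joins a state circle to itself.

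The equivalence then follows directly from Definition~\ref{adeq}: $D$ is $\sigma$-adequate with respect to $\sigma$ if and only if no state segment of $H_\sigma(D)$ joins a state circle to itself, which by the previous paragraph is exactly the condition that $|s_\sigma(D_c')| < |s_\sigma(D)|$ for every crossing $c$ of $D$. The main obstacle in the plan is justifying the local dichotomy (merge versus split) in the middle step; once this is carefully derived from the planar embedding of the state circles together with the observation that everything outside the small disk at $c$ is unchanged, the remaining implications amount to bookkeeping with the definitions.
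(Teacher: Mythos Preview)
Your proposal is correct and follows essentially the same approach as the paper: both observe that changing the crossing type at $c$ swaps the smoothing there, then argue locally that this either merges two distinct state circles into one (when the segment at $c$ joins distinct circles) or splits one circle into two (when it joins a circle to itself), and finally read off the equivalence from Definition~\ref{adeq}. If anything, your treatment of the merge/split dichotomy via the local planar surgery is slightly more explicit than the paper's, which simply asserts the trichotomy $|s_\sigma(D_c')|\in\{|s_\sigma(D)|-1,\,|s_\sigma(D)|,\,|s_\sigma(D)|+1\}$ and then notes that the desired equality $|s_\sigma(D_c')|=|s_\sigma(D)|-1$ is equivalent to the strict inequality.
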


\begin{proof}
By Definition~\ref{adeq}, $D$ is $\sigma$-adequate with respect to a state $\sigma$ if and only if its state graph $H_{\sigma}$ contains no state segments that connect a state circle to itself. This is equivalent to the condition that every state segment $s$ of $H_{\sigma}$ connects two distinct state circles of $H_{\sigma}$, call them $C_{1}$ and $C_{2}$. Since the state segment $s$ corresponds to a crossing, call it $c$, of $D$, then changing the crossing type of $c$ (but not changing the state $\sigma$) switches the local structure of the crossing resolution at $c$ so that the new state segment, call it $s_{c}'$, now joins a state circle, call it $C_{12}$, to itself. Therefore, the condition that every state segment $s$ of $H_{\sigma}$ connects two distinct state circles $C_{1}$ and $C_{2}$ of $H_{\sigma}$ is equivalent to the condition that $\left|s_{\sigma}(D_{c}')\right|=\left|s_{\sigma}(D)\right|-1$, where $D_{c}'$ is the link diagram obtained from $D$ by changing the crossing type of a single crossing $c$. Since changing the type of a single crossing of $D$ can either increase $\left|s_{\sigma}(D)\right|$ by one, leave $\left|s_{\sigma}(D)\right|$ unchanged, or decrease $\left|s_{\sigma}(D)\right|$ by one, then the equation $\left|s_{\sigma}(D_{c}')\right|=\left|s_{\sigma}(D)\right|-1$ is equivalent to the inequality $\left|s_{\sigma}(D_{c}')\right|<\left|s_{\sigma}(D)\right|$, which gives the desired result.  
\end{proof}



We now give a state circle count for a link diagram and use it to provide a characterization of $\sigma$-adequacy for a link diagram that uses the edge-restricted and edge-contracted Tait graphs. 

\begin{proposition} \label{circleprop}
Let $D$ be a connected checkerboard-colored link diagram with $n \geq 0$ crossings and with associated Tait graph $G$. Furthermore, let $e(G)$ denote the number of edges of $G$, let $v(G)$ denote the number of vertices of $G$, and let $k(G)$ denote the number of connected components of $G$. Then, for $\sigma$ a state of $D$, we have the following.
\begin{enumerate}
	\item[(1)] $\left|s_{\sigma}(D)\right|=e(G|E_{\sigma})-v(G|E_{\sigma})+2k(G|E_{\sigma})$. 
	\item[(2)] $D$ is $\sigma$-adequate with respect to the state $\sigma$ if and only if $G|E_{\sigma}$ contains no bridges and $G/E_{\sigma}$ contains no loops. 
\end{enumerate}
\end{proposition}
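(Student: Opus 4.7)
The plan for part (1) is to exhibit the state circles of $H_{\sigma}$ as the boundary components of a planar surface built from $G|E_\sigma$. I would thicken each vertex of $G$ to a small disk (representing the corresponding black region) and, for each edge $e \in E_\sigma$, attach a thin band along $e$ joining the two adjacent disks; call the resulting compact planar surface $N_\sigma$. The key claim is that $\partial N_\sigma$, viewed as a collection of disjoint simple closed curves in the plane, coincides with the state circles of $H_\sigma$. To justify this I would check the local picture at each crossing: at an $E_\sigma$-edge (white-type resolution) the two resolved arcs are precisely the two sides of the corresponding band, while at an $\overline{E_\sigma}$-edge (black-type resolution) the two resolved arcs lie along the adjacent disk boundaries with the state segment bridging the gap between them. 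Since $G|E_\sigma$ is planar, $N_\sigma$ decomposes as the disjoint union of planar thickenings of its connected components $G_1, \dots, G_k$ (with $k = k(G|E_\sigma)$), and each $N(G_i)$ has $f(G_i) = e(G_i) - v(G_i) + 2$ boundary components by Euler's formula. Summing over $i$ yields
\[
|s_\sigma(D)| \;=\; \sum_{i=1}^{k}\bigl(e(G_i) - v(G_i) + 2\bigr) \;=\; e(G|E_\sigma) - v(G|E_\sigma) + 2k(G|E_\sigma),
\]
which is the desired formula.

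For part (2), the plan is to combine part (1) with Proposition~\ref{adeqaltprop}. By that proposition, $D$ is $\sigma$-adequate if and only if, for every crossing $c$ of $D$, changing the crossing type at $c$ strictly decreases the number of state circles; equivalently, writing $\sigma^c$ for the state of $D$ that agrees with $\sigma$ except at $c$, we have $|s_{\sigma^c}(D)|<|s_\sigma(D)|$. Writing $e$ for the edge of $G$ corresponding to $c$, we have $E_{\sigma^c} = E_\sigma \triangle \{e\}$, so part (1) lets me compute $|s_{\sigma^c}(D)| - |s_\sigma(D)|$ as the change in $e(G|E) - v(G|E) + 2k(G|E)$ under toggling $e$. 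A quick case analysis (noting that $v$ is fixed and that $e(G|E)$ changes by $\pm 1$ while $k(G|E)$ changes by $0$ or $\mp 1$) shows that this difference is negative precisely when either $e \in E_\sigma$ and $e$ is not a bridge of $G|E_\sigma$, or $e \in \overline{E_\sigma}$ and the endpoints of $e$ lie in distinct components of $G|E_\sigma$. Requiring strict decrease for every crossing $c$ is therefore equivalent to requiring that no edge of $E_\sigma$ is a bridge of $G|E_\sigma$ and that no edge of $\overline{E_\sigma}$ has both endpoints in the same component of $G|E_\sigma$. These two conditions are precisely that $G|E_\sigma$ has no bridges and that $G/E_\sigma$ has no loops, since a loop of $G/E_\sigma$ arises exactly from an edge of $\overline{E_\sigma}$ whose endpoints are identified under contraction of $E_\sigma$.

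The main obstacle will be the rigorous justification of the bijection between state circles and boundary components of $N_\sigma$ in part (1): one must verify, at each of the four possible local crossing configurations $(\pm,A)$ and $(\pm,B)$, that the band-and-disk thickening really does trace out the state circles as its boundary and that the state segments appear in the expected locations. If that direct geometric verification feels awkward, a fallback is to prove part (1) by induction on $|E_\sigma|$, using $\sigma_{bl}$ (where $E_\sigma = \emptyset$ and both sides of the formula equal $v(G)$) as the base case and tracking how a single toggle changes both the state circle count and the quantity $e(G|E_\sigma) - v(G|E_\sigma) + 2k(G|E_\sigma)$; the inductive step then reduces to the single local lemma that the two arcs at a crossing $c$ lie on the same state circle of $H_\sigma$ if and only if the endpoints of $e$ lie in the same connected component of $G|E_\sigma$.
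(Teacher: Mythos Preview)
Your proposal is correct, and for Part~(2) it is essentially identical to the paper's argument: both invoke Proposition~\ref{adeqaltprop}, apply Part~(1) to compute the change in the state-circle count under a single toggle, and do the two-case analysis on whether the relevant edge lies in $E_\sigma$ or in $\overline{E_\sigma}$. (Your reformulation in terms of $\sigma^c$ rather than $D'_c$ is harmless, since flipping the resolution at $c$ and flipping the crossing type at $c$ yield the same resolved picture and hence the same toggle $E_\sigma \mapsto E_\sigma \triangle \{e\}$.)

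For Part~(1), however, your argument is genuinely different from the paper's. The paper proves the formula by induction on the number of crossings $n$: it picks a crossing $c$, resolves it to obtain a diagram $D'$ with $n-1$ crossings and Tait graph $G' \cong G/e$ or $G' \cong G-e$ (depending on whether $e \in E_\sigma$), and checks in each of the four local subcases that the inductive hypothesis yields the formula for $D$. Your approach instead gives a direct topological count: you realise the state circles as the boundary components of the planar regular neighbourhood $N_\sigma$ of $G|E_\sigma$ and read off the number of components from Euler's formula, one connected piece at a time. This avoids the crossing-by-crossing case analysis entirely and is more conceptual; in fact it anticipates the ribbon-graph viewpoint developed later in Section~\ref{duality}, where your surface $N_\sigma$ is essentially the partial dual $(\mathbb{G}_{\sigma_{bl}})^{E_{PD}^\sigma}$ restricted to the dualised edges. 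The paper's induction, on the other hand, stays closer to the combinatorics of the diagram and needs no appeal to Euler's formula or surfaces. Your fallback induction on $|E_\sigma|$ from the base case $\sigma_{bl}$ is a third variant, intermediate between the two.
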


\begin{proof}[Proof of Conclusion (1)]
Proceed by induction on $n$ (which is both the number of crossings of $D$ and, by Definition~\ref{Tait}, the number of edges of $G$). In the case when $n=0$, we have that $D$ is the trivial diagram of the unknot and, therefore, $\left|s_{\sigma}(D)\right|=1$. Furthermore, since $E_{\sigma}=\emptyset$, then $G|E_{\sigma}=G$ is a single isolated vertex and $$e(G|E_{\sigma})-v(G|E_{\sigma})+2k(G|E_{\sigma})=0-1+2(1)=1.$$ Now assume that $\left|s_{\sigma}(D)\right|=e(G|E_{\sigma})-v(G|E_{\sigma})+2k(G|E_{\sigma})$ for $D$ a connected checkerboard-colored link diagram with $n = j \geq 0$ crossings. Let $n = j+1 \geq 1$ and choose an arbitrary crossing $c$ of $D$, which corresponds to an edge $e$ of $G$. 

\bigskip

\noindent \underline{Case 1}: Suppose $c$ is a non-nugatory crossing of $D$. By Remark~\ref{nugatoryrmk}, this implies that $e$ is neither a bridge nor a loop of $G$. Since there are two crossing types and two state resolutions, then there are four subcases to consider. By rotation, we do not need to consider two separate cases for the two checkerboard colorings of $D$.  

\bigskip

\begin{figure} 
\centering
\def\svgwidth{4in}
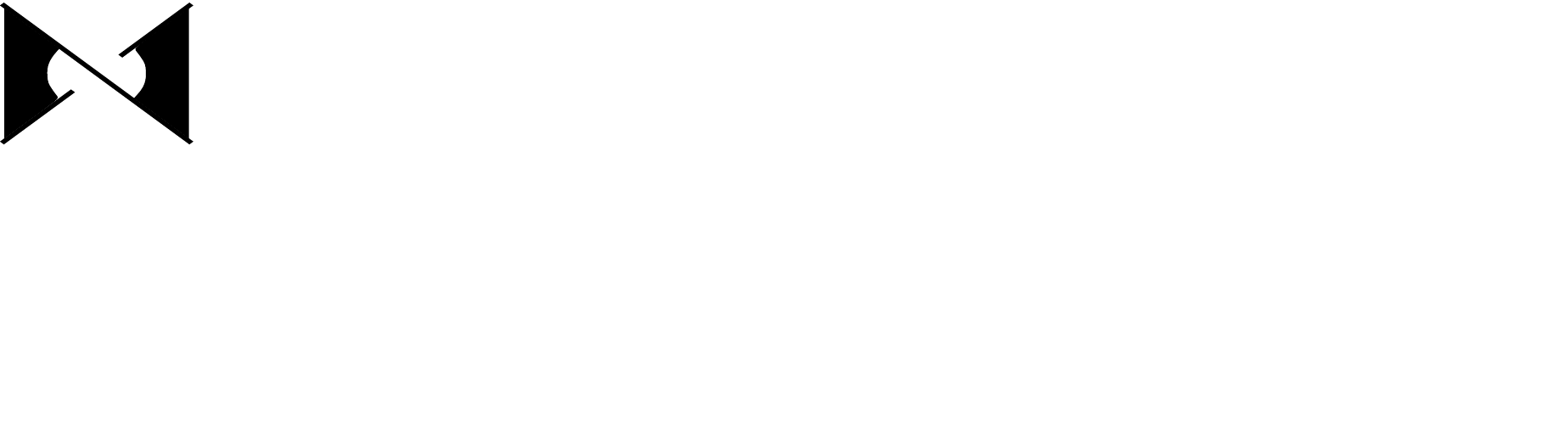
\caption{Schematic for non-nugatory crossings of checkerboard-colored link diagrams and the associated edges in the Tait graph (left) and the two resolutions of these non-nugatory crossings and the associated local behavior of their Tait graphs (right).}
\label{nug4}
\end{figure}

\noindent \underline{Subcase 1}: Suppose a local neighborhood of $c$ has the checkerboard coloring represented by the top left of Figure~\ref{nug4} and suppose $c$ is A-resolved by the state $\sigma$. Then, by Remark~\ref{edgelabel} and Definition~\ref{phi}, we have that $e$ is a $(+,A)$ edge of $G$ and, therefore, $e \in E_{\sigma}$. Let $D'$ be the link diagram formed by A-resolving the crossing $c$ and let $\sigma'$ be the state $\sigma$ restricted to the crossings of $D$ except $c$. Then $D'$ is a connected checkerboard-colored link diagram with $n-1$ crossings and with associated Tait graph $G' \cong G/e$. See the top right of Figure~\ref{nug4}. Hence, by the inductive hypothesis and the observation that $G'|E_{\sigma'} \cong (G|E_{\sigma})/e$, we get that
\begin{eqnarray*}
\left|s_{\sigma}(D)\right| & = & \left|s_{\sigma'}(D')\right|\\
& = & e(G'|E_{\sigma'})-v(G'|E_{\sigma'})+2k(G'|E_{\sigma'})\\
& = & (e(G|E_{\sigma})-1)-(v(G|E_{\sigma})-1)+2k(G|E_{\sigma})\\
& = & e(G|E_{\sigma})-v(G|E_{\sigma})+2k(G|E_{\sigma}).
\end{eqnarray*} 

\noindent \underline{Subcase 2}: Suppose a local neighborhood of $c$ has the checkerboard coloring represented by the top left of Figure~\ref{nug4} and suppose $c$ is B-resolved by the state $\sigma$. Then, by Remark~\ref{edgelabel} and Definition~\ref{phi}, we have that $e$ is a $(+,B)$ edge of $G$ and, therefore, $e \notin E_{\sigma}$. Let $D'$ be the link diagram formed by B-resolving the crossing $c$ and let $\sigma'$ be the state $\sigma$ restricted to the crossings of $D$ except $c$. Then $D'$ is a connected checkerboard-colored link diagram with $n-1$ crossings and with associated Tait graph $G' \cong G-e$. See the bottom right of Figure~\ref{nug4}. Hence, by the inductive hypothesis and the observation that $G'|E_{\sigma'} \cong G|E_{\sigma}$, we get that
\begin{eqnarray*}
\left|s_{\sigma}(D)\right| & = & \left|s_{\sigma'}(D')\right|\\
& = & e(G'|E_{\sigma'})-v(G'|E_{\sigma'})+2k(G'|E_{\sigma'})\\
& = & e(G|E_{\sigma})-v(G|E_{\sigma})+2k(G|E_{\sigma}).
\end{eqnarray*}   

\noindent \underline{Subcase 3}: Suppose a local neighborhood of $c$ has the checkerboard coloring represented by the bottom left of Figure~\ref{nug4} and suppose $c$ is A-resolved by the state $\sigma$. This subcase is very similar to Subcase~2.


\bigskip

\noindent \underline{Subcase 4}: Suppose a local neighborhood of $c$ has the checkerboard coloring represented by the bottom left of Figure~\ref{nug4} and suppose $c$ is B-resolved by the state $\sigma$. This subcase is very similar to Subcase~1.


\bigskip

\noindent \underline{Case 2}: Suppose $c$ is a nugatory crossing of $D$. This case is similar to Case 1. 
\end{proof}

\begin{proof}[Proof of Conclusion (2)]
By Proposition~\ref{adeqaltprop}, $D$ is $\sigma$-adequate with respect to a state $\sigma$ if and only if $\left|s_{\sigma}(D_{c}')\right|<\left|s_{\sigma}(D)\right|$, where $D_{c}'$ is the link diagram obtained from $D$ by changing the crossing type of a single crossing $c$. By Definition~\ref{Tait}, the crossing $c$ of $D$ corresponds to an edge $e$ of the Tait graph $G$ and changing the crossing type of $c$ to form $D_{c}'$ corresponds to changing the edge sign of $e$. Let $G'$ denote the Tait graph associated to $D_{c}'$ and let $e'$ denote the edge of $G'$ corresponding to the edge $e$ of $G$ with its sign changed. Since the resolution type of $c$ is not changed when forming $D_{c}'$ from $D$, then the state $\sigma$ of $D$ can be identified with a state, call it $\sigma$, of $D_{c}'$. By Conclusion~(1), the inequality $\left|s_{\sigma}(D_{c}')\right|<\left|s_{\sigma}(D)\right|$ is equivalent to the inequality
\begin{equation}
e(G'|E_{\sigma})-v(G'|E_{\sigma})+2k(G'|E_{\sigma}) < e(G|E_{\sigma})-v(G|E_{\sigma})+2k(G|E_{\sigma}).
\label{stateinequal}
\end{equation}

\noindent \underline{Case 1}: Suppose $e$ is a $(+,A)$ edge of $G$. Then $e \in E_{\sigma}$ is an edge of $G|E_{\sigma}$ while $e' \notin E_{\sigma}$ is not an edge of $G'|E_{\sigma}$. Therefore, since $G'|E_{\sigma} \cong (G|E_{\sigma})-e$, then $$e(G'|E_{\sigma})-v(G'|E_{\sigma})+2k(G'|E_{\sigma})=(e(G|E_{\sigma})-1)-v(G|E_{\sigma})+2k((G|E_{\sigma})-e).$$ By substituting this information into Inequality~\ref{stateinequal} and canceling and rearranging terms, we get that $k((G|E_{\sigma})-e)-k(G|E_{\sigma}) < \dfrac{1}{2}$. Since we are working with integers, this inequality is equivalent to $k((G|E_{\sigma})-e)-k(G|E_{\sigma}) \leq 0$, which is equivalent to $k((G|E_{\sigma})-e) \leq k(G|E_{\sigma})$. Since deleting an edge either leaves the number of connected components unchanged or increases the number of connected components by one, then the previous inequality is equivalent to the condition that $k((G|E_{\sigma})-e) \neq k(G|E_{\sigma})+1$. This says that deleting $e$ from $G|E_{\sigma}$ does not increase the number of connected components, which means that $e \in E_{\sigma}$ does not correspond a bridge of $G|E_{\sigma}$. 

\bigskip

\noindent \underline{Case 2}: Suppose $e$ is a $(+,B)$ edge of $G$. Then $e \notin E_{\sigma}$ is not an edge of $G|E_{\sigma}$ while $e' \in E_{\sigma}$ is an edge of $G'|E_{\sigma}$. Therefore, since $G'|E_{\sigma} \cong (G|E_{\sigma}) \cup e$, then $$e(G'|E_{\sigma})-v(G'|E_{\sigma})+2k(G'|E_{\sigma})=(e(G|E_{\sigma})+1)-v(G|E_{\sigma})+2k((G|E_{\sigma}) \cup e).$$ By substituting this information into Inequality~\ref{stateinequal} and canceling and rearranging terms, we get that $k((G|E_{\sigma}) \cup e)-k(G|E_{\sigma}) < -\dfrac{1}{2}$. Since we are working with integers, this inequality is equivalent to $k((G|E_{\sigma}) \cup e)-k(G|E_{\sigma}) \leq -1$, which is equivalent to $k((G|E_{\sigma}) \cup e) \leq k(G|E_{\sigma})-1$. Since adding an edge either leaves the number of connected components unchanged or decreases the number of connected components by one, then the previous inequality is equivalent to the condition that $k((G|E_{\sigma}) \cup e) = k(G|E_{\sigma})-1$. This says that adding $e$ to $G|E_{\sigma}$ decreases the number of connected components, which says that $e$ is a bridge of $(G|E_{\sigma}) \cup e$. This means that $e$ is not contained in a cycle of $(G|E_{\sigma}) \cup e$, which means that $e \notin E_{\sigma}$ does not correspond to a loop of $G/E_{\sigma}$. 

\bigskip

\noindent \underline{Case 3}: Suppose $e$ is a $(-,A)$ edge of $G$. This case is very similar to Case~2. 

\bigskip

\noindent \underline{Case 4}: Suppose $e$ is a $(-,B)$ edge of $G$. This case is very similar to Case~1.

\bigskip

Since Case~1 and Case~4 show that $G|E_{\sigma}$ contains no bridges and since Case~2 and Case~3 show that $G/E_{\sigma}$ contains no loops, then we have the desired result.  
\end{proof}

From Proposition~\ref{circleprop}, we are able to prove Theorem~\ref{introthm} from the introduction, which gives a characterization of $\sigma$-adequacy for a link diagram that uses only the edge-restricted Tait graph $G|E_{\sigma}$. This theorem is the first main component of a method, presented in Section~\ref{methodsec}, to find all of the $\sigma$-adequate states of a link diagram. 

\begin{proof}[Proof of Theorem~\ref{introthm}]
By Conclusion~(2) of Proposition~\ref{circleprop}, $D$ is $\sigma$-adequate with respect to a state $\sigma$ if and only if $G|E_{\sigma}$ contains no bridges and $G/E_{\sigma}$ contains no loops. Since an edge of a graph is not a bridge if and only if that edge is contained in a cycle of the given graph, then the condition that $G|E_{\sigma}$ contains no bridges is equivalent to the condition that every edge of $G|E_{\sigma}$, which can be identified with an edge in the set $E_{\sigma} \subseteq E(G)$, is contained in a cycle of $G|E_{\sigma}$. 

Since $D$ is reduced, then, by Remark~\ref{nugatoryrmk}, $G$ contains no loops. This means that loops of $G/E_{\sigma}$ must correspond to certain non-loop edges of $G$. Since a connected component of the subgraph $G|E_{\sigma}$ of $G$ becomes a single vertex of $G/E_{\sigma}$ when we contract the edges of $E_{\sigma}$ to form $G/E_{\sigma}$ from $G$, then an edge of $\overline{E_{\sigma}}=E(G)-E_{\sigma}$ with both endpoints on a connected component of $G|E_{\sigma}$ becomes a loop in $G/E_{\sigma}$. Conversely, since a loop of $G/E_{\sigma}$ must come from an edge $e \in \overline{E_{\sigma}}$ in a cycle of the subgraph $(G|E_{\sigma}) \cup e$ of $G$, then $e$ must have both endpoints on a connected component of $G|E_{\sigma}$. Therefore, the condition that $G/E_{\sigma}$ contains no loops is equivalent to the condition that no edge of $\overline{E_{\sigma}}$ has both endpoints on a connected component of $G|E_{\sigma}$. 
\end{proof}

\subsection{Generalizing Thistlethwaite's Nonvanishing Theorem}

We are now ready to prove Theorem~\ref{mainthm1} from the introduction, which shows that a connected checkerboard-colored link diagram $D$ is $\sigma$-adequate with respect to a state $\sigma$ if and only if $\phi_{D}^{\sigma}(t) \neq 0$. 

\begin{proof}[Proof of Theorem~\ref{mainthm1}]
By Conclusion~(2) of Proposition~\ref{circleprop}, $D$ is $\sigma$-adequate with respect to a state $\sigma$ if and only if $G|E_{\sigma}$ contains no bridges and $G/E_{\sigma}$ contains no loops. By Property~(2) of Proposition~\ref{phiprop}, we have that $G|E_{\sigma}$ contains no bridges and $G/E_{\sigma}$ contains no loops if and only if $\phi_{D}^{\sigma}(t) \neq 0$. 
\end{proof}

\subsection{A Sigma-Adequate State Sum for the Symmetrized Tutte Polynomial}

We now define the Tutte polynomial that will be expressed as a $\sigma$-adequate state sum when applied to the Tait graph associated to a link diagram. 

\begin{definition} 
If $\chi_{G}(t,t)$ denotes the evaluation of the Tutte polynomial $\chi_{G}(x,y)$ of a graph $G$ at $x=y=t$, then we call $\chi_{G}(t,t)$ the \textit{symmetrized Tutte polynomial} of $G$. 
\end{definition}

\begin{remark} \label{anychecker}
By Remark~\ref{planardualrmk}, changing the checkerboard coloring of $D$ corresponds to changing from the associated Tait graph $G$ to its planar dual $G^{*}$. Moreover, by Proposition~\ref{planardualprop}, the variables $x$ and $y$ of the Tutte polynomial reverse roles under the operation of planar duality, that is, $\chi_{G^{*}}(x,y)=\chi_{G}(y,x)$. By combining these two ideas, we get that the initial choice of checkerboard coloring of $D$ has no effect on the symmetrized Tutte polynomial $\chi_{G}(t,t)$ of $G$.
\end{remark}

By restricting Theorem~1 of \cite{Kook} to graphs (a special type of matroid) and making the substitution $x=y=t$, we get the following result. 

\begin{theorem}{(Special Case of Theorem 1 of \cite{Kook})} \label{Tuttesum}
The symmetrized Tutte polynomial $\chi_{G}(t, t)$ of a graph $G$ can be written as the summation $\displaystyle \chi_{G}(t, t)=\sum_{H\subseteq E(G)}\ \chi_{G|H}(0, t)\cdot\chi_{G/H}(t, 0)$. \qed
\end{theorem}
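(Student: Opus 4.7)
The plan is to prove Theorem~\ref{Tuttesum} by induction on $|E(G)|$, mirroring the deletion-contraction recursion of Definition~\ref{Tuttepoly}. The base case $|E(G)|=0$ is immediate: both sides equal $1$, since $G$ is a union of isolated vertices (each contributing $\chi=1$) and $H=\emptyset$ is the only subset.

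For the inductive step, fix $e\in E(G)$ and split the sum on whether $e\in H$. Writing $H=H'$ or $H=H'\sqcup\{e\}$ with $H'\subseteq E(G-e)$, the argument will repeatedly invoke the graph identifications $G/(H'\sqcup\{e\})=(G/e)/H'$ and $G|H'=(G-e)|H'$, together with the Tutte-polynomial identity $\chi_{(G-e)|H'}=\chi_{(G/e)|H'}$ whenever the endpoints $u,v$ of $e$ lie in different components of $G|H'$ (a consequence of Definition~\ref{Tuttepoly}(1)-(2), as wedging components at a shared vertex gives the same Tutte polynomial as the disjoint union). There are three cases, matching the three subcases of Definition~\ref{Tuttepoly}(3).

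If $e$ is a bridge of $G$, then $u,v$ lie in different components of $G|H'$ for every $H'$, making $e$ a bridge of $G|(H'\sqcup\{e\})$ and killing every $e\in H$ term via $\chi(0,t)=0$. Meanwhile $e$ remains a coloop of each $G/H'$, giving $\chi_{G/H'}(t,0)=t\cdot\chi_{(G/e)/H'}(t,0)$, so the surviving $e\notin H$ sum equals $t\cdot\chi_{G/e}(t,t)=\chi_G(t,t)$ by the inductive hypothesis. The loop case is dual: $e\notin H$ terms vanish since $e$ remains a loop of $G/H$ (forcing $\chi_{G/H}(t,0)=0$), and the $e\in H$ terms collapse to $t\cdot\chi_{G-e}(t,t)=\chi_G(t,t)$ using $G/e=G-e$ for a loop.

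The main obstacle is the case where $e$ is neither a bridge nor a loop, so $\chi_G(t,t)=\chi_{G-e}(t,t)+\chi_{G/e}(t,t)$. Here the strategy is to establish, for each $H'\subseteq E(G-e)$, the pairwise identity
\[
\chi_{G|(H'\sqcup\{e\})}(0,t)\chi_{G/(H'\sqcup\{e\})}(t,0)+\chi_{G|H'}(0,t)\chi_{G/H'}(t,0)=\chi_{(G-e)|H'}(0,t)\chi_{(G-e)/H'}(t,0)+\chi_{(G/e)|H'}(0,t)\chi_{(G/e)/H'}(t,0),
\]
after which summing on $H'$ and applying the inductive hypothesis to $G-e$ and $G/e$ closes the argument. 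The verification splits according to whether $u,v$ share a component of $G|H'$. In the same-component subcase, $e$ is a loop of $G/H'$ so $\chi_{G/H'}(t,0)=0$ while $(G/e)/H'=(G-e)/H'$ as graphs; applying deletion-contraction to the surviving $\chi_{G|(H'\sqcup\{e\})}(0,t)$ factor then matches the two right-hand terms. In the different-component subcase, $e$ is a bridge of $G|(H'\sqcup\{e\})$ so $\chi_{G|(H'\sqcup\{e\})}(0,t)=0$, and the identity reduces to applying ordinary deletion-contraction to $\chi_{G/H'}(t,0)$. The crux is justifying this last step by confirming that $e$ is neither a bridge nor a loop of $G/H'$: non-loopness follows immediately from $u,v$ sitting in distinct components of $G|H'$, and non-bridgeness from the observation that $e$ lies on some cycle $C$ of $G$ (as $e$ is not a bridge of $G$) and $C-e$ cannot lie entirely in $H'$ (else $u,v$ would already be connected in $G|H'$), so at least one edge of $C-e$ survives in $(G-e)/H'$ and $C$ projects to a cycle through $e$ in $G/H'$.
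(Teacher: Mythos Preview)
Your inductive proof is correct. The paper, however, does not give any proof of this theorem: it is stated with a \qed\ and explicitly introduced as the specialization $x=y=t$ of the Kook--Reiner--Stanton convolution formula (Theorem~1 of \cite{Kook}), which is quoted from the literature without argument.

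So your route is genuinely different from what the paper does. The paper simply imports the general matroid identity
\[
\chi_{G}(x,y)=\sum_{H\subseteq E(G)}\chi_{G|H}(0,y)\,\chi_{G/H}(x,0)
\]
and sets $x=y=t$, whereas you give a direct, self-contained deletion--contraction induction tailored to this specialization. What your approach buys is that the reader need not consult \cite{Kook} or know any matroid theory: everything reduces to Definition~\ref{Tuttepoly} and elementary graph manipulations. What the citation buys is brevity and the stronger two-variable identity (which you are not proving, though essentially the same induction would work there too). One minor remark on your Subcase~3b: the non-bridgeness of $e$ in $G/H'$ can be argued more simply than via the surviving edge of $C-e$---since $e$ is not a bridge of $G$, the endpoints $u,v$ are connected in $G-e$, and contraction of $H'$ preserves connectivity, so $[u],[v]$ remain connected in $(G-e)/H'=(G/H')-e$; combined with $[u]\neq[v]$ this already shows $e$ is neither a loop nor a bridge of $G/H'$. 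Your cycle argument is not wrong, just slightly more elaborate than necessary.
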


We now prove Theorem~\ref{mainthm2} from the introduction, which shows that the symmetrized Tutte polynomial of the Tait graph $G$ associated to a checkerboard-colored link diagram $D$ can be expanded as a sum of the $\phi_{D}^{\sigma}(t)$ polynomials over the $\sigma$-adequate states of $D$. 

\begin{proof}[Proof of Theorem~\ref{mainthm2}]
Let $e$ denote an arbitrary edge of $G$ and let $c$ denote the corresponding crossing of $D$. Note that the $+$ and $-$ signs on the edges of $G$ have already been determined by the choice of checkerboard coloring for $D$. Thus, for each choice of edge subset $H \subseteq E(G)$ (which also forces a choice of edge subset $\overline{H}=E(G)-H \subseteq E(G)$), we have both a corresponding choice of state $\sigma$ of $D$ and a corresponding choice of edge subset $E_{\sigma}\subseteq E(G)$. Specifically, we make the $+$ edges of $H$ correspond to A-resolved crossings of $D$, the $-$ edges of $H$ correspond to B-resolved crossings of $D$, the $+$ edges of $\overline{H}$ correspond to B-resolved crossings of $D$, and the $-$ edges of $\overline{H}$ correspond to A-resolved crossings of $D$. See Table~\ref{edgetable} for a depiction of this correspondence.    

By Theorem~\ref{Tuttesum}, Table~\ref{edgetable}, and Definition~\ref{phi}, we get that 
\begin{eqnarray*}
\displaystyle \chi_{G}(t, t) & = & \sum_{H\subseteq E(G)}\ \chi_{G|H}(0, t)\cdot\chi_{G/H}(t, 0)\\
& = & \sum_{E_{\sigma}\subseteq E(G)}\ \chi_{G|E_{\sigma}}(0, t)\cdot\chi_{G/E_{\sigma}}(t, 0)\\
& = & \sum_{E_{\sigma}\subseteq E(G)}\ \phi_{D}^{\sigma}(t)\\
& = & \sum_{\text{states}\ \sigma}\ \phi_{D}^{\sigma}(t).
\end{eqnarray*} 
By Theorem~\ref{mainthm1}, since $D$ is $\sigma$-adequate with respect to a state $\sigma$ if and only if $\phi_{D}^{\sigma}(t) \neq 0$, then the summation above can be written as $$\displaystyle \chi_{G}(t, t)=\sum_{\sigma\text{-adequate\ states}\ \sigma}\ \phi_{D}^{\sigma}(t).$$ \end{proof}

\begin{table}
\begin{center}
\begin{tabular}{c|c|c|c} 
sign of edge $e$ & $e \in H \subseteq E(G)$? & resolution type of crossing $c$ & $e \in E_{\sigma}\subseteq E(G)$?\\
\hline
$+$ & Yes & A & Yes\\
\hline
$-$ & Yes & B & Yes\\
\hline
$+$ & No & B & No\\
\hline
$-$ & No & A & No\\
\end{tabular}\end{center}
\caption{A table relating subsets $H \subseteq E(G)$ to both states $\sigma$ of $D$ and subsets $E_{\sigma}\subseteq E(G)$.}
\label{edgetable}
\end{table}

From Theorem~\ref{mainthm2}, we are able to prove Theorem~\ref{maincor1} from the introduction, which provides upper and lower bounds on the number of $\sigma$-adequate states of a given link diagram. 

\begin{proof}[Proof of Theorem~\ref{maincor1}]
Let $N$ denote the number of $\sigma$-adequate states of $D$. By Proposition~\ref{checkerrmk}, since the checkerboard states of $D$ are always $\sigma$-adequate states, then we have that $N \geq 2$. By the spanning tree definition of the Tutte polynomial $\chi_{G}(x,y)$, we have that $$\displaystyle \chi_{G}(t,t)=\sum_{\text{spanning\ trees}\ T\ \text{of}\ G} t^{IA(T)+EA(T)},$$ where $IA(T)$ denotes the internal activity of $T$ and $EA(T)$ denotes the external activity of $T$. (For more information on the spanning tree expansion of the Tutte polynomial, refer to \cite{Thistlethwaite1}.) By Theorem~\ref{mainthm2}, we have that $$\displaystyle \chi_{G}(t,t)=\sum_{\sigma\text{-adequate\ states}\ \sigma\ \text{of}\ D}\phi_{D}^{\sigma}(t).$$ Expand $\chi_{G}(t,t)$ as a polynomial $\chi_{G}(t,t)=a_{m}t^{m}+a_{m-1}t^{m-1}+\cdots+a_{1}t+a_{0}$. Since, by Property~(1) of Proposition~\ref{phiprop}, the coefficients of $\phi_{D}^{\sigma}(t)$ are nonnegative, then $a_{i} \geq 0$ for all $0 \leq i \leq m$ and no cancellation can occur in the $\sigma$-adequate state sum expansion above. Since $\phi_{D}^{\sigma}(t)$ is a polynomial in $t$, then it is potentially possible that each of the polynomials $\phi_{D}^{\sigma}(t)$ corresponding to a $\sigma$-adequate state $\sigma$ of $D$ is a single monomial $t^{IA(T)+EA(T)}$ for some spanning tree $T$ of $G$. Otherwise, some of the polynomials $\phi_{D}^{\sigma}(t)$ are sums of monomials from the spanning tree expansion above. Therefore, we have that $N \leq a_{m}+a_{m-1}+\cdots+a_{1}+a_{0}=\chi_{G}(1,1)$, where $\chi_{G}(1,1)$ is the number of spanning trees of $G$. 
\end{proof}

To show that the lower bound of Theorem~\ref{maincor1} is sharp for diagrams of an infinite family of links, we prove that the standard diagrams of the $(2,n)$-torus links, for $n \geq 2$, have exactly two $\sigma$-adequate states.  

\begin{proposition} \label{torusprop}
Let $D$ be the standard diagram of the $(2,n)$-torus link for $n \geq 2$. Then $D$ has exactly two $\sigma$-adequate states, namely the black and white checkerboard states $\sigma_{bl}$ and $\sigma_{wh}$ of $D$.  
\end{proposition}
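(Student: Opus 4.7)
The plan is to identify the Tait graph of the standard $(2,n)$-torus link diagram and then apply Theorem~\ref{introthm} directly. The standard diagram $D$ is alternating, connected, reduced, and has $n \geq 2$ crossings, so its projection graph has $n$ vertices, $2n$ edges, and (by Euler) $n+2$ regions. Choose the checkerboard coloring for which the Tait graph $G$ has $n$ vertices (and hence $2$ white regions). Then $G$ has $n$ vertices and $n$ edges, and a quick Euler-characteristic count on the planar embedding of $G$ shows it has exactly one bounded face, forcing $G \cong C_n$. Since $D$ is alternating, Remark~\ref{Taitrmk} guarantees that all edges of $G$ carry the same sign.

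Next I would apply Theorem~\ref{introthm} to enumerate the valid partitions $E(G) = E_\sigma \sqcup \overline{E_\sigma}$. The key observation is that the unique cycle of $C_n$ is $C_n$ itself, so if $E_\sigma$ is a proper nonempty subset of $E(G)$, then $G|E_\sigma$ is a disjoint union of paths, hence acyclic. Condition~(1) of Theorem~\ref{introthm} would then fail, forcing $E_\sigma \in \{\emptyset, E(G)\}$. I would then check condition~(2) for each of these two choices: for $E_\sigma = \emptyset$, condition~(2) reduces to the assertion that $G$ has no loops, which follows from Remark~\ref{nugatoryrmk} since $D$ is reduced; for $E_\sigma = E(G)$, condition~(2) is vacuous because $\overline{E_\sigma} = \emptyset$.

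Finally, I would identify these two admissible edge subsets with states. By Proposition~\ref{signlemma}, $E_{\sigma_{bl}} = \emptyset$ and $E_{\sigma_{wh}} = E(G)$, and by Definition~\ref{phi} together with Table~\ref{edgetable} the assignment $\sigma \mapsto E_\sigma$ is a bijection between states of $D$ and subsets of $E(G)$. Hence the two admissible choices of $E_\sigma$ correspond to exactly two $\sigma$-adequate states, namely $\sigma_{bl}$ and $\sigma_{wh}$. The proof presents no real obstacle once the Tait graph is recognized as $C_n$: the severe restriction imposed by condition~(1) on subgraphs of a graph containing a single cycle immediately collapses the analysis to these two cases, both of which are trivial to verify.
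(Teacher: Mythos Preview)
Your proof is correct, but it takes a genuinely different route from the paper's. The paper argues via the Tutte polynomial state sum: it computes $\phi_{D}^{\sigma_{bl}}(t)=\chi_{C_n}(t,0)$ and $\phi_{D}^{\sigma_{wh}}(t)=\chi_{C_n}(0,t)$ using Proposition~\ref{signlemma} and Proposition~\ref{Tuttecycleprop}, observes that these already sum to $\chi_{C_n}(t,t)$, and then invokes Theorem~\ref{mainthm1} and Theorem~\ref{mainthm2} to conclude that no further $\sigma$-adequate states can exist. You instead apply Theorem~\ref{introthm} directly, exploiting the fact that $C_n$ has a unique cycle so that any proper nonempty $E_\sigma$ makes $G|E_\sigma$ acyclic and kills Condition~(1). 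Your argument is more elementary and entirely graph-theoretic, avoiding any polynomial computation; the paper's argument, on the other hand, is presented precisely to illustrate how the state sum of Theorem~\ref{mainthm2} can certify that a list of $\sigma$-adequate states is complete, which is the technique later promoted in Section~\ref{methodsec}.
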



\begin{proof}
Without loss of generality, give $D$ the checkerboard coloring where the unbounded region is shaded white. By Definition~\ref{Tait}, the corresponding Tait graph $G$ is the cycle graph $C_{n}$ with $n$ edges. By Proposition~\ref{signlemma} and Proposition~\ref{Tuttecycleprop}, for the black and white checkerboard states $\sigma_{bl}$ and $\sigma_{wh}$ of $D$, we have that $\phi_{D}^{\sigma_{bl}}(t)=\chi_{C_{n}}(t,0)=t^{n-1}+t^{n-2}+\cdots+t^{2}+t$ and $\phi_{D}^{\sigma_{wh}}(t)=\chi_{C_{n}}(0,t)=t$. Since these polynomials sum to give $\chi_{C_{n}}(t,t)=t^{n-1}+t^{n-2}+\cdots+t^{2}+2t$, then, by Theorem~\ref{mainthm1} and Theorem~\ref{mainthm2}, there are exactly two $\sigma$-adequate states of $D$. 
\end{proof}

To show that the upper bound of Theorem~\ref{maincor1} is sharp for diagrams of an infinite family of links, we prove that the number of $\sigma$-adequate states of the connect sum of $n \geq 1$ standard diagrams of the $(2,2)$-torus link (the \textit{Hopf link}) is exactly the number of spanning trees in either of its Tait graphs. 

\begin{proposition} \label{Hopfprop}
Let $D$ be the connect sum of $n \geq 1$ standard diagrams of the $(2,2)$-torus link (the Hopf link). Then the number of $\sigma$-adequate states of $D$ is exactly the number of spanning trees in a Tait graph $G$ of $D$.  
\end{proposition}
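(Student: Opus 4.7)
The plan is to reduce the problem to the block structure of the Tait graph. The Tait graph of a single Hopf diagram, in either checkerboard coloring, is the bigon $C_{2}$ consisting of two parallel edges of a common sign joining two vertices, since both crossings of the standard Hopf link carry the same sign. Because connect sum of link diagrams corresponds to a one-vertex sum of Tait graphs, the graph $G$ of $D$ has a block decomposition $G = C_{2} \vee C_{2} \vee \cdots \vee C_{2}$ into $n$ bigons, and within each bigon the two edges share a common sign (though this sign may vary from block to block). In particular, $G$ has no loops and no bridges, so $D$ is reduced and Theorem~\ref{introthm} applies.

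Given this structure, I would first count the spanning trees of $G$ by combining the multiplicativity of the Tutte polynomial under block decomposition from Definition~\ref{Tuttepoly}(2) with Proposition~\ref{Tuttecycleprop}: since $\chi_{C_{2}}(x,y) = x+y$, we obtain $\chi_{G}(1,1) = \chi_{C_{2}}(1,1)^{n} = 2^{n}$.

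Next, I would count the $\sigma$-adequate states via Theorem~\ref{introthm}. For each state $\sigma$ and each bigon $B_{i}$, let $a_{i}$ denote the number of edges of $B_{i}$ lying in $E_{\sigma}$. Since every cycle of $G$ lies in a single block (as blocks are maximal biconnected subgraphs) and the only cycle in each $C_{2}$ block uses both of its parallel edges, Condition~(1) of Theorem~\ref{introthm} forces $a_{i} \in \{0,2\}$ for every $i$. For Condition~(2), I would observe via the block-cut-vertex tree of $G$ that removing both edges of any single block $B_{j}$ already separates the two endpoints of $B_{j}$ in $G$; hence whenever $a_{j} = 0$ the endpoints of $B_{j}$ lie in distinct components of $G|E_{\sigma}$, so no edge of $\overline{E_{\sigma}}$ can have both endpoints in the same component of $G|E_{\sigma}$, making Condition~(2) automatic.

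Since the two edges of each bigon share a common sign, the condition $a_{i} \in \{0,2\}$ is equivalent to resolving the two crossings of the corresponding Hopf summand identically (both A-resolved or both B-resolved). This yields exactly two valid choices per bigon and $2^{n}$ total $\sigma$-adequate states, matching the $2^{n}$ spanning trees of $G$. The main obstacle will be the verification of Condition~(2) via the block-cut-vertex tree; once one sees that no cycle of $G$ spans multiple blocks and that deleting a single block's edges separates its two endpoints, the rest is immediate bookkeeping.
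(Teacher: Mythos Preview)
Your proof is correct and takes a somewhat different route from the paper's. Both arguments compute the spanning-tree count identically, via the block multiplicativity of the Tutte polynomial for the chain of $n$ bigons, obtaining $\chi_{G}(1,1)=2^{n}$. The difference lies in how the $\sigma$-adequate states are counted. The paper argues directly from the state-graph picture: in each two-crossing twist region, resolving the two crossings oppositely visibly produces a segment joining a state circle to itself, while resolving them identically does not; the converse is asserted without further detail. You instead invoke Theorem~\ref{introthm}, observing that every cycle of $G$ is confined to a single $C_{2}$ block, so Condition~(1) forces $E_{\sigma}$ to contain all or none of each bigon, and then checking Condition~(2) via the block--cut-vertex structure (deleting a bigon's edges separates its endpoints in $G$, hence also in $G|E_{\sigma}$). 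Your approach has the advantage of being entirely internal to the paper's Tait-graph machinery and of making the verification of the ``converse'' direction fully explicit, at the cost of a slightly longer argument; the paper's version is more pictorial and self-contained but leaves that converse to the reader.
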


\begin{proof}
Without loss of generality, give $D$ the checkerboard coloring where the unbounded region is shaded white. By Definition~\ref{Tait}, the corresponding Tait graph $G$ is a path of $n$ double edges. Therefore, by Definition~\ref{Tuttepoly}, we get that $\chi_{G}(t,t)=(2t)^{n}$, which gives that the number of spanning trees of $G$ is $\chi_{G}(1,1)=2^{n}$. By Figure~\ref{res} it can be seen that, for $D$ to be $\sigma$-adequate, the pair of crossings in a $T(2,2)$ connect summand of $D$ (in a \textit{twist region} of $D$) must either be both A-resolved or both B-resolved. The converse of this statement can also be shown to be true. Therefore, since there are $n$ connect summands and a choice of either both A-resolutions or both B-resolutions for each connect summand, then there are exactly $2^{n}$ $\sigma$-adequate states of $D$.  
\end{proof}



\subsection{A Method for Finding All Sigma-Adequate States of a Link Diagram} \label{methodsec}

We now combine results to provide a method for finding all of the $\sigma$-adequate states of a connected, reduced, checkerboard-colored link diagram $D$. 

\bigskip

\noindent \textbf{\underline{Step 1}:} Construct the Tait graph $G$ associated to the checkerboard-colored link diagram $D$. 

\bigskip

\noindent \textbf{\underline{Step 2}:} Look for partitions $E(G)=E_{\sigma} \sqcup \overline{E_{\sigma}}$ of the edges of $G$ that satisfy Condition~(1) and Condition~(2) of Theorem~\ref{introthm}. 

\bigskip

\noindent \textbf{\underline{Step 3}:} Given the collection of edge partitions $E(G)=E_{\sigma} \sqcup \overline{E_{\sigma}}$ from Step~2, use Table~\ref{edgetable} to find the corresponding $\sigma$-adequate states $\sigma$. 

\bigskip

\noindent \textbf{\underline{Step 4}:} Use Definition~\ref{phi} to compute the $\phi_{D}^{\sigma}(t)$ polynomials for the $\sigma$-adequate states found in Step 3. Note that, by Theorem~\ref{mainthm1}, getting that $\phi_{D}^{\sigma}(t) \neq 0$ will confirm that $D$ is $\sigma$-adequate with respect to the state $\sigma$.  

\bigskip

\noindent \textbf{\underline{Step 5}:} Use Definition~\ref{Tuttepoly} to compute the symmetrized Tutte polynomial $\chi_{G}(t,t)$. 

\bigskip

\noindent \textbf{\underline{Step 6}:} Find the sum of the $\phi_{D}^{\sigma}(t)$ polynomials from Step 4 and compare this to the symmetrized Tutte polynomial $\chi_{G}(t,t)$. If equality is achieved, then, by Theorem~\ref{mainthm1} and Theorem~\ref{mainthm2}, all of the $\sigma$-adequate states of $D$ have been found (and confirmed). If equality is not achieved, return to Step~2. 

\bigskip

\noindent \textbf{\underline{An Example of the Method}:} Consider the connected reduced diagram, call it $D$, of the nonalternating and \textit{inadequate} (neither A- nor B-adequate) knot $11n_{95}$ from the KnotInfo website (\cite{KnotInfo}). Without loss of generality, checkerboard color $D$ so that the unbounded region of $D$ is shaded white. This is sometimes called the \textit{canonical checkerboard coloring}. 

\newpage

\noindent \underline{Step 1}: From our checkerboard coloring of $D$, we get the Tait graph $G$ depicted on the left side of Figure~\ref{11n95}. 

\bigskip

We now break Step~2 into two cases. This division depends on whether or not $G|E_{\sigma}$ consists entirely of \textit{fundamental cycles} of $G$, were a \textit{fundamental cycle} of the planar graph $G$ is the boundary of a face (complementary region) of $G$. 

\bigskip

\noindent \underline{Step 2A}: First, consider the case of Theorem~\ref{introthm} where $G|E_{\sigma}$ is a union of isolated vertices of $G$ and fundamental cycles of $G$ and where no edge of $\overline{E_{\sigma}}$ has both endpoints on a connected component of $G|E_{\sigma}$. For our labeling of the fundamental cycles of $G$, see the center of Figure~\ref{11n95}. See Table~\ref{statetable} for a list of the 19 collections of fundamental cycles of $G$ with edge sets $E_{\sigma}$. 

\bigskip

\noindent \underline{Step 2B}: Second, consider the case of Theorem~\ref{introthm} where $G|E_{\sigma}$ is a union of isolated vertices of $G$ and cycles of $G$ that are not all fundamental cycles and where no edge of $\overline{E_{\sigma}}$ has both endpoints on a connected component of $G|E_{\sigma}$. In the case of the standard diagram $D$ of the knot $11n_{95}$, there is only one such union of cycles. See the right side of Figure~\ref{11n95}. 

\bigskip

\noindent \underline{Step 3A}: Using Table~\ref{edgetable}, the edge sets $E_{\sigma}$ from Step~2A correspond to the black checkerboard state $\sigma_{bl}$, the white checkerboard state $\sigma_{wh}$, and 17 other $\sigma$-adequate states of $D$, which we label as $\sigma_{1}, \sigma_{2}, \ldots, \sigma_{17}$. As an aside, the state $\sigma_{2}$ is the Seifert state of $D$. See Table~\ref{statetable} for the list of the 19 $\sigma$-adequate states $\sigma$ corresponding to the edge sets $E_{\sigma}$ found in Step~2A. 

\bigskip

\noindent \underline{Step 3B}: Using Table~\ref{edgetable}, the edge set $E_{\sigma}$ from Step~2B corresponds to a $\sigma$-adequate state, call it $\widetilde{\sigma}$, of $D$. 

\bigskip

\noindent \underline{Step 4A}: Using Definition~\ref{phi}, we compute the polynomials $\phi_{D}^{\sigma}(t)$ for the 19 $\sigma$-adequate states found in Step~3A. See Table~\ref{statetable}. Since $\phi_{D}^{\sigma}(t) \neq 0$ for each such state, then we have confirmation that $D$ is $\sigma$-adequate with respect to each state in this collection of states.  

\bigskip

\noindent \underline{Step 4B}: Using Definition~\ref{phi}, we get that $\phi_{D}^{\widetilde{\sigma}}(t) = t^{4}+t^{3}$. Since $\phi_{D}^{\widetilde{\sigma}}(t) \neq 0$, then we have confirmation that $D$ is $\sigma$-adequate with respect to the state $\widetilde{\sigma}$.

\bigskip

\noindent \underline{Step 5}: Using Definition~\ref{Tuttepoly}, we compute the symmetrized Tutte polynomial of $G$ to be
\begin{equation}
\chi_{G}(t,t)=2t^{6}+16t^{5}+48t^{4}+62t^{3}+33t^{2}+6t.
\label{chieqn}
\end{equation}

\noindent \underline{Step 6}: By summing the polynomials $\phi_{D}^{\sigma}(t)$ for the states $\sigma_{bl}$, $\sigma_{wh}$, and $\sigma_{1}, \sigma_{2}, \ldots, \sigma_{17}$ from Step~4A (see See Table~\ref{statetable}) and adding this to $\phi_{D}^{\widetilde{\sigma}}(t) = t^{4}+t^{3}$ from Step~4B, we get the symmetrized Tutte polynomial $\chi_{G}(t,t)$, as given in Equation~\ref{chieqn}. 

\bigskip

To summarize, we have found (and confirmed) all 20 $\sigma$-adequate states of the standard diagram $D$ of the knot $11n_{95}$. The advantage of this method is that it only requires an investigation of the Tait graph $G$ to find $\sigma$-adequate states of the link diagram $D$ and utilizes computations of Tutte polynomials for the Tait graph $G$, the edge-restricted Tait graphs $G|E_{\sigma}$, and the edge-contracted Tait graphs $G/E_{\sigma}$ to confirm that all of the $\sigma$-adequate states of $D$ have been found. 

\begin{figure}
\centering
\def\svgwidth{5in}
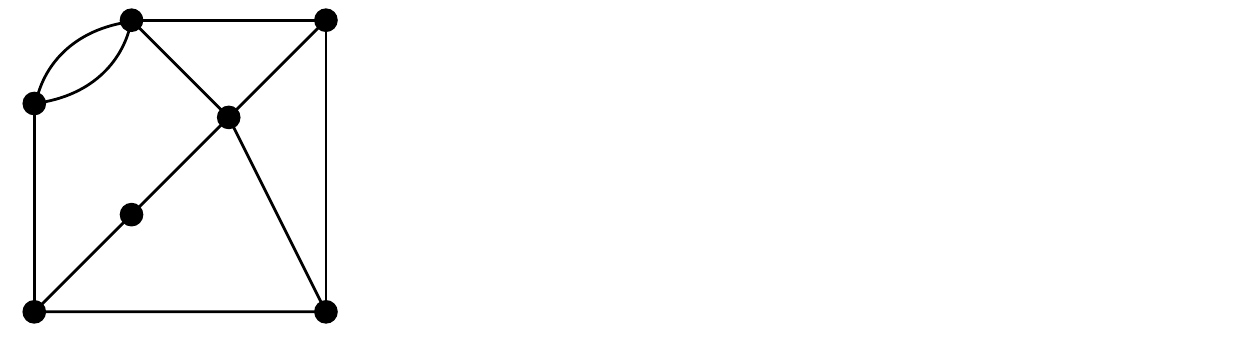
\caption{The Tait graph $G$ associated to the canonical checkerboard coloring of the standard diagram $D$ of the knot $11n_{95}$ (left), a labeling of the fundamental cycles of $G$ (center), and the cycles of $G$ whose edges form the edge set $E_{\widetilde{\sigma}}$ corresponding to the $20^{\text{th}}$ $\sigma$-adequate state $\widetilde{\sigma}$ of $D$. }
\label{11n95}
\end{figure}
\begin{table}
\begin{center}
\begin{tabular}{c|c|c} 
$\sigma$-adequate states $\sigma$ & fundamental cycle(s) of $G$ & polynomials $\phi_{D}^{\sigma}(t)$\\
 & with edge set $E_{\sigma}$ & \\
\hline
$\sigma_{bl}$ & NONE & $t^{6}+4t^{5}+8t^{4}+11t^{3}+9t^{2}+3t$\\
\hline
$\sigma_{1}$ & $1$ & $t^{6}+4t^{5}+8t^{4}+8t^{3}+3t^{2}$\\
\hline
$\sigma_{2}$ & $3$ & $t^{5}+2t^{4}+3t^{3}+t^{2}$\\
\hline
$\sigma_{3}$ & $4$ & $t^{5}+2t^{4}+2t^{3}+t^{2}$\\
\hline
$\sigma_{4}$ & $5$ & $t^{4}+2t^{3}+t^{2}$\\
\hline
$\sigma_{5}$ & $1,2$ & $t^{4}+2t^{3}+t^{2}$\\
\hline
$\sigma_{6}$ & $1,3$ & $t^{5}+2t^{4}+t^{3}$\\
\hline
$\sigma_{7}$ & $1,4$ & $t^{5}+2t^{4}+t^{3}$\\
\hline
$\sigma_{8}$ & $1,5$ & $t^{4}+t^{3}$\\
\hline
$\sigma_{9}$ & $1,6$ & $t^{4}+2t^{3}+t^{2}$\\
\hline
$\sigma_{10}$ & $3,4$ & $t^{5}+3t^{4}+3t^{3}+t^{2}$\\
\hline
$\sigma_{11}$ & $4,5$ & $t^{4}+2t^{3}+t^{2}$\\
\hline
$\sigma_{12}$ & $1,2,3$ & $t^{4}+2t^{3}+t^{2}$\\
\hline
$\sigma_{13}$ & $1,2,5$ & $t^{4}+2t^{3}+t^{2}$\\
\hline
$\sigma_{14}$ & $1,3,4$ & $t^{5}+2t^{4}+t^{3}$\\
\hline
$\sigma_{15}$ & $1,4,5$ & $t^{4}+t^{3}$\\
\hline
$\sigma_{16}$ & $3,4,5$ & $t^{4}+2t^{3}+t^{2}$\\
\hline
$\sigma_{17}$ & $1,3,4,6$ & $t^{5}+4t^{4}+5t^{3}+2t^{2}$\\
\hline
$\sigma_{wh}$ & $1,2,3,4,5,6$ & $t^{5}+5t^{4}+10t^{3}+9t^{2}+3t$\\
\end{tabular}
\caption{A table depicting 19 of the $\sigma$-adequate states for the standard diagram of the knot $11n_{95}$ (left), the fundamental cycles of $G$ whose edges form the 19 corresponding edge subsets $E_{\sigma} \subseteq E(G)$ (center), and the 19 corresponding polynomials $\phi_{D}^{\sigma}(t)$ (right).}
\label{statetable}
\end{center}
\end{table}

\section{Applying the Perspective of Ribbon Graphs and Partial Duality} \label{duality}
\label{PDsec}

In this section, we use work of Chmutov on ribbon graphs and partial duality (\cite{Chmutov}) to provide an alternate perspective on $\sigma$-adequacy for link diagrams.

\subsection{Ribbon Graphs, States and Partial Duality}

We begin by introducing the notions of ribbon graphs and partial duality, as defined by Chmutov in \cite{Chmutov}.

\begin{definition} \label{ribbon}
A \textit{ribbon graph} $\mathbb{G}$ is a surface with boundary that consists of a vertex set $V(\mathbb{G})$ and an edge set $E(\mathbb{G})$, where the \textit{vertices} are a collection of disks, where the \textit{edges} are a collection of bands, and where the following conditions hold. 
\begin{itemize}
	\item The vertices and edges intersect in a disjoint collection of line segments.
	\item Each such line segment is formed by the intersection of one vertex and one edge. 
	\item Each edge contains exactly two such line segments. 
\end{itemize}
To distinguish ribbon graphs from graphs, we will often use the term \textit{vertex disk} instead of vertex and \textit{edge ribbon} instead of edge. For examples of ribbon graphs, see Figure~\ref{PDExample1} and Figure~\ref{PDExample2}. 
\end{definition}

As shown in \cite{Chmutov}, to each state $\sigma$ of a link diagram, we can associate a \textit{state ribbon graph} $\mathbb{G}_{\sigma}$. 

\begin{definition} \label{ribbondef}
Given a state $\sigma$ of a link diagram $D$, we construct the corresponding \textit{state ribbon graph} $\mathbb{G}_{\sigma}$ as follows. Recall that, by Definition~\ref{trivalent}, we can use the state $\sigma$ to construct the state graph $H_{\sigma}$, which consists of a disjoint collection of state circles and a disjoint collection of state segments. By capping off each state circle of $H_{\sigma}$ with a disk (in such a way that disks coming from inner circles lie above disks coming from outer circles), we obtain the disjoint collection of vertex disks of $\mathbb{G}_{\sigma}$. By covering each state segment of $H_{\sigma}$ with a planar band that deformation retracts back to the state segment, we obtain the collection of edge ribbons of $\mathbb{G}_{\sigma}$. It can quickly be seen that the conditions required by Definition~\ref{ribbon} are satisfied. 
\end{definition}


Given the state ribbon graph $\mathbb{G}_{\sigma}$ of a link diagram $D$, we can reinterpret the $\sigma$-adequacy of $D$ as follows. 

\begin{remark} \label{adeqrmk}
By Definition~\ref{adeq}, we have that a link diagram $D$ is $\sigma$-adequate with respect to a state $\sigma$ if and only if the state graph $H_{\sigma}$ contains no state segments that connect a state circle to itself. Since the state circles of $H_{\sigma}$ correspond to the vertex disks of $\mathbb{G}_{\sigma}$ and since the state segments of $H_{\sigma}$ correspond to the edge ribbons of $\mathbb{G}_{\sigma}$, then we have that $D$ is $\sigma$-adequate with respect to a state $\sigma$ if and only if the state ribbon graph $\mathbb{G}_{\sigma}$ associated to the state $\sigma$ contains no loops.
\end{remark}

Recall that, by Definition~\ref{planardualdef}, we can form the planar dual $G^{*}$ of a planar graph $G$. By extending the notion of planar duality to a (not necessarily planar) ribbon graph $\mathbb{G}$, we can define the \textit{geometric dual} $\mathbb{G}^{*}$ of $\mathbb{G}$. By applying geometric duality to a subset, call it $F$, of the edge ribbons of a ribbon graph $\mathbb{G}$, we can (roughly speaking) form the \textit{partial dual} $\mathbb{G}^F$ of $\mathbb{G}$. We define one case of partial duality below. See \cite{Chmutov} for full details about geometric and partial duality. 

\begin{definition} \label{PDdef}
Let $\mathbb{G}$ be a ribbon graph and let $e$ be a non-loop edge ribbon of $\mathbb{G}$. Recall that the edge ribbon $e$ is a four-sided band and notice that two opposite sides of the edge ribbon $e$ run along boundaries of vertex disks and the remaining two opposite sides of the edge ribbon $e$ join the distinct vertex disk endpoints of $e$. The \textit{partial dual of $\mathbb{G}$ with respect to the non-loop edge ribbon $e$}, denoted $\mathbb{G}^{e}$ and formed by \textit{dualizing the non-loop edge ribbon $e$}, is the ribbon graph that results from reversing the roles of the pairs of opposite sides of the edge ribbon $e$ of $\mathbb{G}$. See Figure~\ref{PD} for a local depiction of partial duality with respect to a non-loop edge ribbon. Note that the vertex disk structure of the ribbon graph is changed by the operation of partial duality. Given a collection $F=\left\{e_{1}, e_{2}, \ldots, e_{k}\right\}$ of non-loop edge ribbons of $\mathbb{G}$, the \textit{partial dual of $\mathbb{G}$ with respect to the collection $F$ of non-loop edge ribbons}, denoted $\mathbb{G}^{F}$ and formed by \textit{dualizing the collection $F$ of non-loop edge ribbons}, is the ribbon graph that results from dualizing the edge ribbons of $F$ all at once. See Figure~\ref{PDExample1} for a depiction of the construction of the partial dual of the cycle ribbon graph $\mathbb{C}_{4}$ with respect to all four of its edge ribbons.
\end{definition}

\begin{figure} 
\centering
\def\svgwidth{4in}
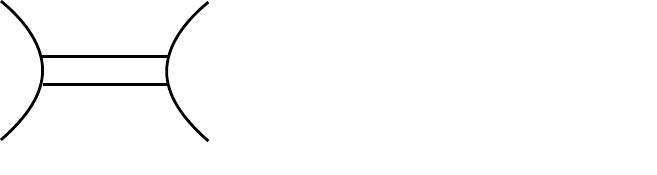
\caption{A depiction of the construction of the partial dual, $\mathbb{G}^{e}$, of $\mathbb{G}$ with respect to a non-loop edge ribbon $e$.}
\label{PD}
\end{figure}

\begin{figure} 
\centering
\def\svgwidth{4in}
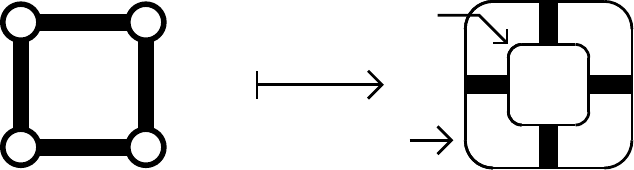
\caption{A depiction of the construction of the partial dual of the cycle ribbon graph $\mathbb{C}_{4}$ with respect to all four of its edge ribbons. The edge ribbons both before and after partial duality are shaded black while the vertex disks both before and after partial duality are shaded white. We view inner vertex disks as lying above outer vertex disks in order to avoid any self intersections in the resulting ribbon graph.}
\label{PDExample1}
\end{figure}

Using the notion of partial duality, Chmutov shows that, given two states of a link diagram, the corresponding state ribbon graphs are partial duals of each other (Lemma~6.2 of \cite{Chmutov}). By applying this result to the black checkerboard state $\sigma_{bl}$ and any other state $\sigma$ of a checkerboard-colored link diagram $D$, we get the following important result.

\begin{proposition}{(Special Case of Lemma~6.2 of \cite{Chmutov})} \label{dualcor}
Let $\sigma$ be a state of a checkerboard-colored link diagram $D$ and let $\sigma_{bl}$ denote the black checkerboard state of $D$. Then the state ribbon graph $\mathbb{G}_{\sigma}$ is a partial dual of the black checkerboard state ribbon graph $\mathbb{G}_{\sigma_{bl}}$. In particular, $\mathbb{G}_{\sigma}$ and $\mathbb{G}_{\sigma_{bl}}$ are partially dual with respect to the set of edge ribbons corresponding to the crossings of $D$ where the states $\sigma$ and $\sigma_{bl}$ differ from each other. \qed 
\end{proposition}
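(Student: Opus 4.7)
The plan is to derive this proposition as a direct specialization of Chmutov's general result (Lemma~6.2 of \cite{Chmutov}), which asserts that for any two states of a link diagram, the associated state ribbon graphs are partial duals with respect to the set of edge ribbons corresponding to crossings where the two states differ. The proposition stated here is precisely the case where one of the two states is the black checkerboard state $\sigma_{bl}$, so almost no additional work is required beyond invoking the general lemma.

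First, I would set up the identification between the edge ribbons of $\mathbb{G}_{\sigma_{bl}}$ and those of $\mathbb{G}_{\sigma}$. By Definition~\ref{ribbondef}, each state ribbon graph is built by capping off the state circles of $H_{\sigma_{bl}}$ (respectively $H_{\sigma}$) with vertex disks and fattening the state segments into edge ribbons. In both ribbon graphs, the edge ribbons are in bijection with the crossings of $D$, so there is a canonical identification of edge ribbon sets via the underlying crossings. Let $F$ denote the collection of edge ribbons of $\mathbb{G}_{\sigma_{bl}}$ corresponding to exactly those crossings of $D$ where $\sigma$ and $\sigma_{bl}$ choose opposite resolutions (one A-resolves, the other B-resolves).

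Next, I would apply Lemma~6.2 of \cite{Chmutov} with the two states $\sigma_{bl}$ and $\sigma$. The general statement gives $\mathbb{G}_{\sigma} = (\mathbb{G}_{\sigma_{bl}})^{F'}$, where $F'$ is the set of edge ribbons on which the two state resolutions disagree. By our description of $F$, we have $F = F'$, yielding the desired conclusion that $\mathbb{G}_{\sigma}$ is the partial dual of $\mathbb{G}_{\sigma_{bl}}$ with respect to $F$.

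The only possible obstacle is confirming that the indexing convention used in \cite{Chmutov} for the set of dualized edge ribbons matches the indexing used here (i.e., that "edge ribbons where the two states differ" is the correct set, not its complement). This is settled by the local picture at a single crossing: dualizing a single non-loop edge ribbon, as in Definition~\ref{PDdef} and Figure~\ref{PD}, corresponds precisely to switching that crossing's resolution between A and B, since the two pairs of opposite sides of the edge ribbon swap their roles as vertex-boundary arcs versus vertex-joining arcs, exactly mirroring the A/B swap in Figure~\ref{res}. With that local verification in hand, the global statement follows immediately from Chmutov's lemma.
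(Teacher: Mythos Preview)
Your proposal is correct and matches the paper's approach exactly: the paper does not give an independent proof but simply cites this as a special case of Lemma~6.2 of \cite{Chmutov} (hence the \qed\ with no proof block). Your additional local verification that dualizing a single edge ribbon corresponds to switching the resolution at that crossing is a helpful sanity check, but it goes slightly beyond what the paper itself supplies.
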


\subsection{Relating the Tait Graph to the State Ribbon Graphs of a Link Diagram}

We now compare the Tait graph $G$ associated to a checkerboard-colored link diagram $D$ to the state ribbon graph $\mathbb{G}_{\sigma_{bl}}$ associated to the black checkerboard state $\sigma_{bl}$ of $D$. As will be shown below, these graphs encode the same information. 

\begin{proposition} \label{Gprop}
Let $D$ be a checkerboard-colored link diagram with associated Tait graph $G$, let $\sigma_{bl}$ denote the black checkerboard state of $D$, and let $\mathbb{G}_{\sigma_{bl}}$ denote the corresponding black checkerboard state ribbon graph. Then $G$ is the spine of $\mathbb{G}_{\sigma_{bl}}$. Therefore, the cycles of $G$ correspond to the cycles of $\mathbb{G}_{\sigma_{bl}}$ and the planarity of $G$ corresponds to the planarity of $\mathbb{G}_{\sigma_{bl}}$.  
\end{proposition}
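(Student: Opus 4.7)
The plan is to exhibit an explicit embedding of $G$ into $\mathbb{G}_{\sigma_{bl}}$ and then verify that this embedding is a deformation retract, from which the two corollaries follow. First I would unpack the relevant definitions to set up natural bijections between the combinatorial data of $G$ and that of $\mathbb{G}_{\sigma_{bl}}$. By Definition~\ref{checkerboardstatesdef}, the state circles of $H_{\sigma_{bl}}$ are, up to small perturbations, the boundary circles of the black regions of $D$. By Definition~\ref{Tait}, the vertices of $G$ correspond bijectively to the black regions of $D$ and thus, by Definition~\ref{ribbondef}, to the vertex disks of $\mathbb{G}_{\sigma_{bl}}$ (which are obtained by capping off the state circles). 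Similarly, the edges of $G$ correspond bijectively to the crossings of $D$, which in turn correspond bijectively to the state segments of $H_{\sigma_{bl}}$ and hence to the edge ribbons of $\mathbb{G}_{\sigma_{bl}}$.

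Next I would realize $G$ geometrically as a subset of $\mathbb{G}_{\sigma_{bl}}$: place each vertex of $G$ at an interior point of its corresponding vertex disk, and draw each edge of $G$ as an arc that enters its corresponding edge ribbon through one of the two line-segment intersections with a vertex disk, runs along the core of the ribbon, and exits through the other line-segment intersection, thereby joining the correct pair of vertices. Since by Definition~\ref{Tait} the two black regions meeting at a crossing of $D$ are exactly the two vertices of $G$ incident to the corresponding edge, this arc construction is well defined and yields a topological embedding $G \hookrightarrow \mathbb{G}_{\sigma_{bl}}$.

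I would then argue that this embedding realizes $G$ as a spine of $\mathbb{G}_{\sigma_{bl}}$. Each vertex disk deformation retracts radially onto its single marked vertex of $G$, and each edge ribbon (a four-sided band) deformation retracts onto its core arc, the corresponding edge of $G$. By parametrizing carefully so that these local retractions are compatible along the shared line segments where the edge ribbons meet the vertex disks, the local retractions glue together to give a deformation retraction of the surface $\mathbb{G}_{\sigma_{bl}}$ onto $G$; hence $G$ is a spine of $\mathbb{G}_{\sigma_{bl}}$.

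Finally I would derive the two stated consequences. Since a deformation retract is a homotopy equivalence, the first homology groups (and hence cycle spaces) of $G$ and $\mathbb{G}_{\sigma_{bl}}$ coincide under the bijections above, giving the claimed correspondence between cycles of $G$ and cycles of $\mathbb{G}_{\sigma_{bl}}$. For planarity, $\mathbb{G}_{\sigma_{bl}}$ is a regular neighborhood of its spine $G$ in the underlying surface defined by the ribbon graph structure, so $\mathbb{G}_{\sigma_{bl}}$ embeds in $S^{2}$ (equivalently, its underlying surface has genus $0$) if and only if the spine $G$ embeds in $S^{2}$, which is precisely the condition that $G$ be planar. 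The main obstacle, though largely expository, is verifying that the local deformation retractions on vertex disks and edge ribbons can be chosen to agree on their shared line segments; this is a standard feature of spines of compact surfaces with boundary and causes no real difficulty.
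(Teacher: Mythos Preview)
Your proposal is correct and follows essentially the same approach as the paper's proof: both set up the natural bijections (vertices of $G$ $\leftrightarrow$ black regions $\leftrightarrow$ state circles $\leftrightarrow$ vertex disks; edges of $G$ $\leftrightarrow$ crossings $\leftrightarrow$ state segments $\leftrightarrow$ edge ribbons) and conclude that $G$ is the spine of $\mathbb{G}_{\sigma_{bl}}$, with the corollaries following from this. Your write-up is considerably more explicit than the paper's---you spell out the embedding, the gluing of the local retractions, and the homotopy-equivalence and regular-neighborhood arguments for the corollaries---whereas the paper simply records the bijections and declares that ``the remaining results follow immediately.''
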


\begin{proof}
By Definition~\ref{Tait}, the vertices of $G$ correspond to the black regions of $D$. These regions correspond, by considering their boundaries, to the state circles of the black checkerboard state graph $H_{\sigma_{bl}}$, which bound the vertex disks of the black checkerboard state ribbon graph $\mathbb{G}_{\sigma_{bl}}$. Again by Definition~\ref{Tait}, the edges of $G$ correspond to the crossings of $D$ between black regions of $D$. These crossings correspond to the state segments of $H_{\sigma_{bl}}$, which are deformation retracts of the edge ribbons of $\mathbb{G}_{\sigma_{bl}}$. Thus, $G$ is the spine of $\mathbb{G}_{\sigma_{bl}}$. The remaining results follow immediately.
\end{proof}

\begin{remark} \label{nolooprmk}
For a reduced checkerboard-colored link diagram $D$ with associated Tait graph $G$, since $G$ is the spine of the black checkerboard state ribbon graph $\mathbb{G}_{\sigma_{bl}}$ and since $D$ is reduced, then, by Remark~\ref{nugatoryrmk}, $\mathbb{G}_{\sigma_{bl}}$ contains no loops. Hence, by Proposition~\ref{dualcor}, we can study the state ribbon graphs $\mathbb{G}_{\sigma}$ using partial duality for collections of non-loop edge ribbons of $\mathbb{G}_{\sigma_{bl}}$. 
\end{remark}

\begin{definition} \label{EPDdef}
Given a state $\sigma$ of a reduced checkerboard-colored link diagram $D$, let $E_{PD}^{\sigma}$ denote the collection of edge ribbons of $\mathbb{G}_{\sigma_{bl}}$ that must be dualized to realize $\mathbb{G}_{\sigma}$ as a partial dual of $\mathbb{G}_{\sigma_{bl}}$ and let $E_{F}^{\sigma}$ denote the collection of edge ribbons of $\mathbb{G}_{\sigma_{bl}}$ that are not dualized (are fixed) to realize $\mathbb{G}_{\sigma}$ as a partial dual of $\mathbb{G}_{\sigma_{bl}}$. By Remark~\ref{nolooprmk}, $\mathbb{G}_{\sigma_{bl}}$ has no loops. Therefore, by Definition~\ref{PDdef}, we have that $\mathbb{G}_{\sigma}=(\mathbb{G}_{\sigma_{bl}})^{E_{PD}^{\sigma}}$. Additionally, let $(E_{PD}^{\sigma})^{*}$ denote the collection of edge ribbons of $\mathbb{G}_{\sigma}$ that result from dualizing the edge ribbons of $E_{PD}^{\sigma}$ to realize $\mathbb{G}_{\sigma}$ as a partial dual of $\mathbb{G}_{\sigma_{bl}}$ and let $(E_{F}^{\sigma})^{*}$ denote the collection of edge ribbons of $\mathbb{G}_{\sigma}$ that result from not dualizing (fixing) the edge ribbons of $E_{PD}^{\sigma}$ to realize $\mathbb{G}_{\sigma}$ as a partial dual of $\mathbb{G}_{\sigma_{bl}}$. Note that the interiors of the edge ribbons of $(E_{F}^{\sigma})^{*}$ can be identified with the interiors of the edge ribbons of $E_{F}^{\sigma}$. The endpoints of these edge ribbons, however, may have been changed by the operation of partial duality. 
\end{definition}

Let $G$ denote the Tait graph associated to a checkerboard-colored link diagram $D$ and recall, by Definition~\ref{phi}, that $E_{\sigma}=E_{+}^{A} \cup E_{-}^{B} \subseteq E(G)$. In the following proposition, we show that the subset $E_{\sigma} \subseteq E(G)$ corresponds to the subset $E_{PD}^{\sigma} \subseteq E(\mathbb{G}_{\sigma_{bl}})$.  

\begin{proposition} \label{edgeprop}
Let $D$ be a connected, reduced, checkerboard-colored link diagram with at least one crossing and with associated Tait graph $G$. Let $\sigma$ be a state of $D$ with associated state ribbon graph $\mathbb{G}_{\sigma}$ and let $\sigma_{bl}$ denote the black checkerboard state of $D$ with associated black checkerboard state ribbon graph $\mathbb{G}_{\sigma_{bl}}$. Then the edges in the subset $E_{\sigma} \subseteq E(G)$ correspond bijectively to the edge ribbons in the subset $(E_{PD}^{\sigma})^{*} \subseteq E(\mathbb{G}_{\sigma})$, which correspond bijectively to the edge ribbons in the subset $E_{PD}^{\sigma} \subseteq E(\mathbb{G}_{\sigma_{bl}})$. As a result, we also get that the edges in the subset $\overline{E_{\sigma}} \subseteq E(G)$ correspond bijectively to the edge ribbons in the subset $(E_{F}^{\sigma})^{*} \subseteq E(\mathbb{G}_{\sigma})$, which correspond bijectively to the edge ribbons in the subset $E_{F}^{\sigma} \subseteq E(\mathbb{G}_{\sigma_{bl}})$. 
\end{proposition}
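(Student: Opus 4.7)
The plan is to exploit the fact that the Tait graph $G$ is the spine of $\mathbb{G}_{\sigma_{bl}}$ (Proposition~\ref{Gprop}) to get a natural identification of edge sets, and then to trace through the definitions to match the edges classified by the $(+/-, A/B)$ labeling with the edges that must be dualized to pass from $\mathbb{G}_{\sigma_{bl}}$ to $\mathbb{G}_\sigma$. The final bijection with $(E_{PD}^\sigma)^*$ is then essentially tautological from Definition~\ref{EPDdef}.

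First, I would record the underlying bijection $E(G) \leftrightarrow E(\mathbb{G}_{\sigma_{bl}})$. By Definition~\ref{Tait} the edges of $G$ are in bijection with the crossings of $D$, and by Definition~\ref{ribbondef} the edge ribbons of $\mathbb{G}_{\sigma_{bl}}$ are in bijection with the state segments of $H_{\sigma_{bl}}$, which in turn correspond to the crossings of $D$. Proposition~\ref{Gprop} identifies these two bijections, so we may speak of a single edge/crossing throughout. Under this identification I will keep the edge-sign label ($+$ or $-$) inherited from $G$.

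Next, I would pin down how $\sigma_{bl}$ resolves each crossing in terms of this sign label. This is exactly the content of Proposition~\ref{signlemma} (and Figure~\ref{checkerstate}): under the black checkerboard state every $+$ edge is B-resolved and every $-$ edge is A-resolved, so $E_{\sigma_{bl}} = \emptyset$. Now I compare the state $\sigma$ to $\sigma_{bl}$ crossing by crossing. By Proposition~\ref{dualcor}, an edge ribbon $e$ of $\mathbb{G}_{\sigma_{bl}}$ lies in $E_{PD}^\sigma$ if and only if the resolution chosen by $\sigma$ at the corresponding crossing differs from the one chosen by $\sigma_{bl}$. Using the dictionary of the previous step, this means $e \in E_{PD}^\sigma$ precisely when $e$ is a $(+,A)$ edge or a $(-,B)$ edge of $G$. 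But by Definition~\ref{phi} this is exactly the condition $e \in E_\sigma = E_+^A \cup E_-^B$. Hence the identification of Step~1 restricts to a bijection $E_\sigma \leftrightarrow E_{PD}^\sigma$, and by complementation $\overline{E_\sigma} \leftrightarrow E_F^\sigma$.

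Finally, the bijection between $E_{PD}^\sigma$ and $(E_{PD}^\sigma)^*$ (and between $E_F^\sigma$ and $(E_F^\sigma)^*$) is built into Definition~\ref{EPDdef}: by construction $(E_{PD}^\sigma)^*$ is exactly the image of $E_{PD}^\sigma$ under the partial dualization that produces $\mathbb{G}_\sigma = (\mathbb{G}_{\sigma_{bl}})^{E_{PD}^\sigma}$, and similarly for the fixed edge ribbons. Composing the two bijections gives the claimed chain $E_\sigma \leftrightarrow (E_{PD}^\sigma)^* \leftrightarrow E_{PD}^\sigma$, with the analogous statement for the complementary edge sets.

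No single step is difficult; the only real content is the labeling comparison in the second paragraph, and even that is essentially a reading of Figure~\ref{checkerstate}. The main thing to be careful about is not confusing the two natural ``copies'' of an edge ribbon in $\mathbb{G}_{\sigma_{bl}}$ versus $\mathbb{G}_\sigma$, since partial duality reattaches the ends of a dualized ribbon to (possibly) different vertex disks while leaving its interior intact — this is exactly the distinction Definition~\ref{EPDdef} makes between $E_{PD}^\sigma$ and $(E_{PD}^\sigma)^*$, so keeping that notational bookkeeping straight is the only point that requires care.
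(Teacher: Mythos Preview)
Your proposal is correct and follows essentially the same approach as the paper's proof: both use Proposition~\ref{Gprop} to identify $E(G)$ with $E(\mathbb{G}_{\sigma_{bl}})$, invoke Proposition~\ref{signlemma} to see that every edge of $\mathbb{G}_{\sigma_{bl}}$ is $(+,B)$ or $(-,A)$, and then observe that the dualized edges are exactly those whose resolution type flips to $(+,A)$ or $(-,B)$, i.e., the edges of $E_\sigma$. Your explicit appeal to Proposition~\ref{dualcor} to characterize $E_{PD}^\sigma$ as the crossings where $\sigma$ and $\sigma_{bl}$ differ is a slightly cleaner phrasing of the same step the paper carries out by tracking how the labels change under partial duality.
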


\begin{proof}
Given the Tait graph $G$ and the black checkerboard state $\sigma_{bl}$, Remark~\ref{edgelabel} allows us to classify the edges of $G$ as $(+,A)$, $(+,B)$, $(-,A)$, or $(-,B)$ edges. Since, by Proposition~\ref{Gprop}, $G$ is the spine of $\mathbb{G}_{\sigma_{bl}}$, then the edges of $G$ and $\mathbb{G}_{\sigma_{bl}}$ must be in bijective correspondence. Hence, by Proposition~\ref{signlemma}, every edge ribbon of the black checkerboard state ribbon graph $\mathbb{G}_{\sigma_{bl}}$ is either a $(+,B)$ edge or a $(-,A)$ edge. By Definition~\ref{EPDdef}, $E_{PD}^{\sigma}$ denotes the collection of edge ribbons of $\mathbb{G}_{\sigma_{bl}}$ that must be dualized to realize $\mathbb{G}_{\sigma}$ as a partial dual of $\mathbb{G}_{\sigma_{bl}}$. After applying the operation of partial duality to $\mathbb{G}_{\sigma_{bl}}$ to form the state ribbon graph $\mathbb{G}_{\sigma}$, the edge ribbons of $E_{PD}^{\sigma}$ become either $(+,A)$ edges or $(-,B)$ edges of $\mathbb{G}_{\sigma}$. Since, by Definition~\ref{phi}, we have that $E_{\sigma}=E_{+}^{A} \cup E_{-}^{B}$ denotes the set of $(+,A)$ and $(-,B)$ edges of $G$ corresponding to the state $\sigma$ of $D$, then we can see that the edges of $E_{\sigma} \subseteq E(G)$ correspond bijectively to the edge ribbons of $(E_{PD}^{\sigma})^{*} \subseteq E(\mathbb{G}_{\sigma})$. Since partial duality modifies but does not add or remove any edge ribbons, then the edge ribbons of $(E_{PD}^{\sigma})^{*} \subseteq E(\mathbb{G}_{\sigma})$ correspond bijectively to the edge ribbons of $E_{PD}^{\sigma} \subseteq E(\mathbb{G}_{\sigma_{bl}})$. The remaining results follow immediately. 
\end{proof}

In the following corollary, we translate Theorem~\ref{introthm} from the language of the Tait graph $G$ and the edge subset $E_{\sigma} \subseteq E(G)$ to the language of the black checkerboard state ribbon graph $\mathbb{G}_{\sigma_{bl}}$ and the edge subset $E_{PD}^{\sigma} \subseteq E(\mathbb{G}_{\sigma_{bl}})$. This result follows immediately from Proposition~\ref{Gprop} and Proposition~\ref{edgeprop}.

\begin{corollary} \label{bigcoralt}
Let $D$ be a connected, reduced, checkerboard-colored link diagram with at least one crossing and let $\mathbb{G}_{\sigma_{bl}}$ denote the black checkerboard state ribbon graph of $D$. Then $D$ is $\sigma$-adequate with respect to a state $\sigma$ if and only if there exists a partition $E(\mathbb{G}_{\sigma_{bl}})=E_{PD}^{\sigma} \sqcup E_{F}^{\sigma}$ of the edge ribbons of $\mathbb{G}_{\sigma_{bl}}$ such that the following conditions hold. 
\begin{enumerate}
	\item[(1)] Every edge ribbon of $E_{PD}^{\sigma}$ is contained in a cycle of $\mathbb{G}_{\sigma_{bl}}|E_{PD}^{\sigma}$. 
  \item[(2)] No edge ribbon of $E_{F}^{\sigma}$ has both endpoints on a connected component of $\mathbb{G}_{\sigma_{bl}}|E_{PD}^{\sigma}$. 
\end{enumerate}
\end{corollary}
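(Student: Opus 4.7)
The plan is to deduce this corollary directly from Theorem~\ref{introthm} by translating the conditions across the identification between the Tait graph $G$ and the spine of the black checkerboard state ribbon graph $\mathbb{G}_{\sigma_{bl}}$. Since the hypotheses of Corollary~\ref{bigcoralt} match those of Theorem~\ref{introthm}, we already know that $D$ is $\sigma$-adequate with respect to $\sigma$ if and only if there is a partition $E(G)=E_{\sigma}\sqcup\overline{E_{\sigma}}$ of the edges of $G$ satisfying conditions~(1) and~(2) of Theorem~\ref{introthm}. So the proof reduces to showing that such a partition of $E(G)$ corresponds, under the natural bijection between the edges of $G$ and the edge ribbons of $\mathbb{G}_{\sigma_{bl}}$, to a partition $E(\mathbb{G}_{\sigma_{bl}})=E_{PD}^{\sigma}\sqcup E_{F}^{\sigma}$ satisfying the conditions~(1) and~(2) of Corollary~\ref{bigcoralt}.

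First I would invoke Proposition~\ref{Gprop} to identify $G$ with the spine of $\mathbb{G}_{\sigma_{bl}}$. This yields a canonical bijection between vertices of $G$ and vertex disks of $\mathbb{G}_{\sigma_{bl}}$ and between edges of $G$ and edge ribbons of $\mathbb{G}_{\sigma_{bl}}$, and, because the spine is a deformation retract, an isomorphism of underlying multigraphs. Consequently, cycles of $G$ correspond to cycles of $\mathbb{G}_{\sigma_{bl}}$, connected components of subgraphs of $G$ correspond to connected components of the associated sub-ribbon-graphs of $\mathbb{G}_{\sigma_{bl}}$, and the restriction $G|H$ corresponds to $\mathbb{G}_{\sigma_{bl}}|H$ for any edge subset $H\subseteq E(G)=E(\mathbb{G}_{\sigma_{bl}})$.

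Next I would apply Proposition~\ref{edgeprop} to match the two edge subsets. By that proposition, the edges in $E_{\sigma}\subseteq E(G)$ correspond bijectively to the edge ribbons in $E_{PD}^{\sigma}\subseteq E(\mathbb{G}_{\sigma_{bl}})$, and the edges in $\overline{E_{\sigma}}\subseteq E(G)$ correspond bijectively to the edge ribbons in $E_{F}^{\sigma}\subseteq E(\mathbb{G}_{\sigma_{bl}})$. Thus, a partition $E(G)=E_{\sigma}\sqcup\overline{E_{\sigma}}$ corresponds, under the Proposition~\ref{Gprop} identification, to a partition $E(\mathbb{G}_{\sigma_{bl}})=E_{PD}^{\sigma}\sqcup E_{F}^{\sigma}$, and under this correspondence $G|E_{\sigma}$ is identified with $\mathbb{G}_{\sigma_{bl}}|E_{PD}^{\sigma}$.

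Finally I would translate the two conditions. Condition~(1) of Theorem~\ref{introthm} says every edge of $E_{\sigma}$ lies in a cycle of $G|E_{\sigma}$; by the edge bijection and the cycle correspondence, this is equivalent to every edge ribbon of $E_{PD}^{\sigma}$ lying in a cycle of $\mathbb{G}_{\sigma_{bl}}|E_{PD}^{\sigma}$, which is condition~(1) of Corollary~\ref{bigcoralt}. Likewise, condition~(2) of Theorem~\ref{introthm} says no edge of $\overline{E_{\sigma}}$ has both endpoints on a connected component of $G|E_{\sigma}$; by the edge bijection and the component correspondence, this is equivalent to no edge ribbon of $E_{F}^{\sigma}$ having both endpoints on a connected component of $\mathbb{G}_{\sigma_{bl}}|E_{PD}^{\sigma}$, which is condition~(2) of Corollary~\ref{bigcoralt}. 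This completes the equivalence in both directions. There is no real obstacle here — the only subtlety worth being explicit about is that Proposition~\ref{edgeprop} really does turn partitions of $E(G)$ into partitions of $E(\mathbb{G}_{\sigma_{bl}})$ (and not merely of $E(\mathbb{G}_{\sigma})$), which is why the subgraph $\mathbb{G}_{\sigma_{bl}}|E_{PD}^{\sigma}$, rather than any subgraph of $\mathbb{G}_{\sigma}$, appears in the statement.
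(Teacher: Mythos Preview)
Your proposal is correct and follows essentially the same approach as the paper: the paper states that Corollary~\ref{bigcoralt} is a direct translation of Theorem~\ref{introthm} via Proposition~\ref{Gprop} and Proposition~\ref{edgeprop}, and that is precisely what you carry out in detail.
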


In the remark below, we study how partial duality affects the black checkerboard state ribbon graph $\mathbb{G}_{\sigma_{bl}}$.

\begin{remark} \label{PDstagesrmk}
Let $D$ be a connected, reduced, checkerboard-colored link diagram with at least one crossing, let $\sigma$ be a $\sigma$-adequate state of $D$ with associated state ribbon graph $\mathbb{G}_{\sigma}$, and let $\sigma_{bl}$ denote the black checkerboard state of $D$ with associated black checkerboard state ribbon graph $\mathbb{G}_{\sigma_{bl}}$. Then we can realize $\mathbb{G}_{\sigma}$ as a partial dual of $\mathbb{G}_{\sigma_{bl}}$ in three steps.

\bigskip

\noindent \underline{Step 1}: First, delete the edge ribbons of $E_{F}^{\sigma}$ from $\mathbb{G}_{\sigma_{bl}}$ to obtain the graph $\mathbb{G}_{\sigma_{bl}}|{E_{PD}^{\sigma}}$. Since Corollary~\ref{bigcoralt} requires every edge ribbon of $E_{PD}^{\sigma}$ to be contained in a cycle of $\mathbb{G}_{\sigma_{bl}}|E_{PD}^{\sigma}$ for $D$ to be $\sigma$-adequate with respect to the state $\sigma$, then $\mathbb{G}_{\sigma_{bl}}|{E_{PD}^{\sigma}}$ consists of a union of isolated vertex disks and cycles of $\mathbb{G}_{\sigma_{bl}}$. 

\bigskip

\noindent \underline{Step 2}: Second, we apply the operation of partial duality to dualize the (non-loop) edge ribbons of $E_{PD}^{\sigma}$ and obtain the graph $\mathbb{G}_{\sigma}|{(E_{PD}^{\sigma})^{*}}$. 

\bigskip

\noindent \underline{Step 3}: Finally, we add the edge ribbons of $E_{F}^{\sigma}$ back to their original locations, relabeling this collection of edge ribbons as $(E_{F}^{\sigma})^{*}$ and noting that the structure of the vertex disks may have been changed by the operation of partial duality. 

\bigskip

\noindent See Figure~\ref{PDExample2} for a depiction of the construction of the state ribbon graph $\mathbb{G}_{\sigma}$, as realized as a partial dual of the black checkerboard state ribbon graph $\mathbb{G}_{\sigma_{bl}}$. Note that the corresponding connected, reduced, checkerboard-colored link diagram $D$ with $\sigma$-adequate state $\sigma$ has been suppressed. Upon closer investigation, the following conditions can be shown to occur during the three-step partial duality process described above. Figure~\ref{PDExample1} and Figure~\ref{PDExample2} provide useful examples.  
\begin{enumerate}
	\item[(1)] Each isolated vertex disk of $\mathbb{G}_{\sigma_{bl}}|{E_{PD}^{\sigma}}$ becomes an isolated vertex disk of $\mathbb{G}_{\sigma}|{(E_{PD}^{\sigma})^{*}}$, which becomes a non-isolated vertex disk of $\mathbb{G}_{\sigma}$.  
	\item[(2)] The inner boundary of each fundamental cycle in a connected component of $\mathbb{G}_{\sigma_{bl}}|{E_{PD}^{\sigma}}$ becomes an \textit{inner vertex disk} of $\mathbb{G}_{\sigma}|{(E_{PD}^{\sigma})^{*}}$, which becomes a vertex disk of $\mathbb{G}_{\sigma}$. 
  \item[(3)] The outer boundary of each connected component of $\mathbb{G}_{\sigma_{bl}}|{E_{PD}^{\sigma}}$ becomes an \textit{outer vertex disk} of $\mathbb{G}_{\sigma}|{(E_{PD}^{\sigma})^{*}}$, which becomes a vertex disk of $\mathbb{G}_{\sigma}$.
	\item[(4)] The edge ribbons of a fundamental cycle of $\mathbb{G}_{\sigma_{bl}}|{E_{PD}^{\sigma}}$ become edge ribbons of $\mathbb{G}_{\sigma}|{(E_{PD}^{\sigma})^{*}}$ that emanate from an inner vertex disk.
	\item[(5)] If two fundamental cycles of $\mathbb{G}_{\sigma_{bl}}|{E_{PD}^{\sigma}}$ share an edge ribbon, then the resulting ribbon subgraphs of $\mathbb{G}_{\sigma}|{(E_{PD}^{\sigma})^{*}}$ share a corresponding dual edge ribbon.   
	\item[(6)] The fixed edge ribbons of $E_{F}^{\sigma}$ become the fixed edge ribbons of $(E_{F}^{\sigma})^{*}$ where
	\begin{itemize}
		\item the edge ribbons of $E_{F}^{\sigma}$ that are inside a fundamental cycle of $\mathbb{G}_{\sigma_{bl}}|{E_{PD}^{\sigma}}$ and are incident to vertex disks of a fundamental cycle of $\mathbb{G}_{\sigma_{bl}}|{E_{PD}^{\sigma}}$ become edge ribbons of $(E_{F}^{\sigma})^{*}$ that are incident to inner vertex disks of $\mathbb{G}_{\sigma}|{(E_{PD}^{\sigma})^{*}}$. 
		\item the edge ribbons of $E_{F}^{\sigma}$ that are in the unbounded region $\mathbb{G}_{\sigma_{bl}}|{E_{PD}^{\sigma}}$ and are incident to vertex disks of $\mathbb{G}_{\sigma_{bl}}|{E_{PD}^{\sigma}}$ become edge ribbons of $(E_{F}^{\sigma})^{*}$ that are incident to outer vertex disks of $\mathbb{G}_{\sigma}|{(E_{PD}^{\sigma})^{*}}$. 
	\end{itemize}
\end{enumerate}
\end{remark}

\begin{figure} 
\centering
\def\svgwidth{6in}
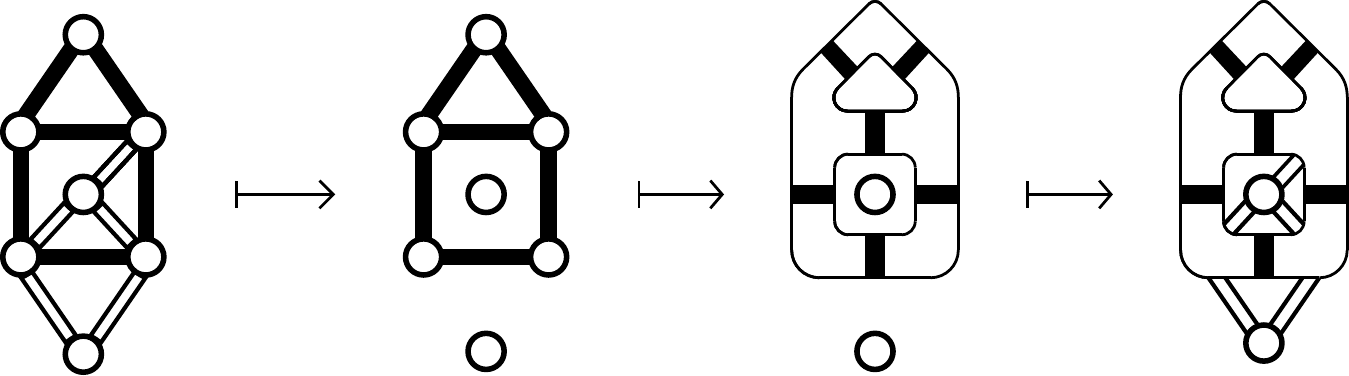
\caption{A depiction of the construction of the state ribbon graph $\mathbb{G}_{\sigma}$, as realized as a partial dual of the black checkerboard state ribbon graph $\mathbb{G}_{\sigma_{bl}}$. The edge ribbons of $E_{PD}^{\sigma}$ and $(E_{PD}^{\sigma})^{*}$ are shaded black while the edge ribbons of $E_{F}^{\sigma}$ and $(E_{F}^{\sigma})^{*}$, as well as all of the vertex disks, are shaded white. Recall that we view disks coming from inner state circles as lying above disks coming from outer state circles in order to avoid any self intersections in the resulting ribbon graph. The ribbon graphs, in order from left to right, are $\mathbb{G}_{\sigma_{bl}}$, $\mathbb{G}_{\sigma_{bl}}|{E_{PD}^{\sigma}}$, $\mathbb{G}_{\sigma}|{(E_{PD}^{\sigma})^{*}}$, and $\mathbb{G}_{\sigma}$.}
\label{PDExample2}
\end{figure}

\section{Sigma-Homogeneous and Homogeneously Adequate States of a Link Diagram} \label{homogmethod}

In this section, we define the families of \textit{$\sigma$-homogeneous} and \textit{homogeneously adequate} link diagrams, we prove Theorem~\ref{maincor2}, and we provide a method for finding all of the homogeneously adequate states of a link diagram.

\subsection{Sigma-Homogeneous States of a Link Diagram}

To begin, we define what it means for a link diagram to be \textit{$\sigma$-homogeneous} with respect to a state $\sigma$. 

\begin{definition} \label{homogeneous}
Let $D$ be a link diagram and let $\sigma$ be a state of $D$. Given that the collection of state circles of the state graph $H_{\sigma}$, which we will denote by $s_{\sigma}(D)$, divide the plane $\mathbb{R}^{2}$ into \textit{complementary regions}, we say that $D$ is \textit{$\sigma$-homogeneous with respect to a state $\sigma$} if no complementary region of $s_{\sigma}(D)$ contains both A-segments and B-segments. We call such a state a \textit{$\sigma$-homogeneous state} of $D$. Note that the all-A and all-B states of $D$ are always $\sigma$-homogeneous states. 
\end{definition}

\begin{remark}
It can be shown, by using Figure~\ref{res}, that every state $\sigma$ of an alternating link diagram $D$ is a $\sigma$-homogeneous state. It can also be shown that, given any link diagram $D$, all of the state segments of the black (resp. white) checkerboard state graph $H_{\sigma_{bl}}$ (resp. $H_{\sigma_{wh}}$) lie in a single complementary region of $s_{\sigma_{bl}}(D)$ (resp. $s_{\sigma_{wh}}(D)$). Therefore, by Proposition~\ref{checkerrmk}, we can show that $D$ is alternating if and only if both of the checkerboard states $\sigma_{bl}$ and $\sigma_{wh}$ are $\sigma$-homogeneous states of $D$.   
\end{remark}

\begin{remark} \label{complregremark}
Let $D$ be a connected, reduced, checkerboard-colored link diagram with at least one crossing, let $\sigma$ be a $\sigma$-adequate state of $D$ with associated state ribbon graph $\mathbb{G}_{\sigma}$, and let $\sigma_{bl}$ denote the black checkerboard state of $D$ with associated black checkerboard state ribbon graph $\mathbb{G}_{\sigma_{bl}}$. Given Remark~\ref{PDstagesrmk} (and using Figure~\ref{PDExample2} as a motivating example), we can classify the nonempty complementary regions of $s_{\sigma}(D)$ as follows.  
\begin{enumerate}
	\item[(1)] The ribbon subgraph of $\mathbb{G}_{\sigma}$ coming from a connected component of $\mathbb{G}_{\sigma_{bl}}|{E_{PD}^{\sigma}}$ corresponds to a single nonempty complementary region of $s_{\sigma}(D)$, namely the single nonempty complementary region between the boundary of an outer vertex disk and a collection of boundaries of inner vertex disks.  
	\item[(2)] The ribbon subgraph of $\mathbb{G}_{\sigma}$ coming from the portion of $\mathbb{G}_{\sigma_{bl}}|E_{F}^{\sigma}$ contained inside a fundamental cycle of $\mathbb{G}_{\sigma_{bl}}|{E_{PD}^{\sigma}}$ corresponds to a single nonempty complementary region of $s_{\sigma}(D)$, namely the single nonempty bounded complementary region inside the boundary of an inner vertex disk. 
  \item[(3)] The ribbon subgraph of $\mathbb{G}_{\sigma}$ coming from the portion of $\mathbb{G}_{\sigma_{bl}}|E_{F}^{\sigma}$ contained in the unbounded region of $\mathbb{G}_{\sigma_{bl}}|{E_{PD}^{\sigma}}$ corresponds to a single nonempty complementary region of $s_{\sigma}(D)$, namely the single nonempty unbounded complementary region outside all of boundaries of the outer vertex disks.
\end{enumerate}  
\end{remark}

\subsection{Homogeneously Adequate States of a Link Diagram}

We now move on to study link diagrams that are both $\sigma$-adequate and $\sigma$-homogeneous with respect to a state $\sigma$. These link diagrams, called \textit{homogeneously adequate} link diagrams, have been explored by a number of authors (\cite{Homogeneous}, \cite{Guts}, \cite{Survey}, \cite{Ozawa}). 

\begin{definition} \label{adeqhomogeneous}
Let $D$ be a link diagram and let $\sigma$ be a state of $D$. If $D$ is $\sigma$-adequate and $\sigma$-homogeneous with respect to the state $\sigma$, then we call $D$ \textit{homogeneously adequate with respect to the state $\sigma$}. Note that an A-adequate (resp. B-adequate) link diagram is homogeneously adequate with respect to the all-A (resp. all-B) state $\sigma_{A}$ (resp. $\sigma_{B}$). 
\end{definition}

We now give a characterization of when a link diagram is homogeneously adequate with respect to a state $\sigma$. 

\begin{proposition} \label{bigthm}
Let $D$ be a connected, reduced, checkerboard-colored link diagram with at least one crossing and let $\mathbb{G}_{\sigma_{bl}}$ denote the black checkerboard state ribbon graph of $D$. Then $D$ is homogeneously adequate with respect to a state $\sigma$ if and only if there exists a partition $E(\mathbb{G}_{\sigma_{bl}})=E_{PD}^{\sigma} \sqcup E_{F}^{\sigma}$ of the edge ribbons of $\mathbb{G}_{\sigma_{bl}}$ such that the following conditions hold. 
\begin{enumerate}
	\item[(1)] Every edge ribbon of $E_{PD}^{\sigma}$ is contained in a cycle of $\mathbb{G}_{\sigma_{bl}}|E_{PD}^{\sigma}$. 
  \item[(2)] No edge ribbon of $E_{F}^{\sigma}$ has both endpoints on a connected component of $\mathbb{G}_{\sigma_{bl}}|E_{PD}^{\sigma}$. 
	\item[(3)] Each connected component of $\mathbb{G}_{\sigma_{bl}}|E_{PD}^{\sigma}$ has edge ribbons corresponding to crossings of $D$ that are either all A-resolved or all B-resolved according to the state $\sigma$. 
	\item[(4)] The edge ribbons of $E_{F}^{\sigma}$ inside a fundamental cycle of $\mathbb{G}_{\sigma_{bl}}|E_{PD}^{\sigma}$ correspond to crossings of $D$ that are either all A-resolved or all B-resolved according to the state $\sigma$.  
 	\item[(5)] The edge ribbons of $E_{F}^{\sigma}$ in the unbounded region outside all of the fundamental cycles of $\mathbb{G}_{\sigma_{bl}}|E_{PD}^{\sigma}$ correspond to crossings of $D$ that are either all A-resolved or all B-resolved according to the state $\sigma$. 
\end{enumerate}
\end{proposition}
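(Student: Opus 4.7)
The plan is to show that the five listed conditions decompose naturally into two groups: conditions (1) and (2) together characterize $\sigma$-adequacy, while conditions (3), (4), and (5) together characterize $\sigma$-homogeneity (given $\sigma$-adequacy). Since Definition~\ref{adeqhomogeneous} declares $D$ to be homogeneously adequate with respect to $\sigma$ precisely when it is both $\sigma$-adequate and $\sigma$-homogeneous with respect to $\sigma$, combining these two subresults will yield the biconditional in a single clean step.

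For the first subresult, I will invoke Corollary~\ref{bigcoralt} verbatim: $D$ is $\sigma$-adequate with respect to $\sigma$ if and only if there is a partition $E(\mathbb{G}_{\sigma_{bl}})=E_{PD}^{\sigma}\sqcup E_{F}^{\sigma}$ satisfying conditions (1) and (2). This disposes of roughly half of the work.

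For the second subresult, I will use the explicit classification of the nonempty complementary regions of $s_{\sigma}(D)$ supplied by Remark~\ref{complregremark}. Assuming that conditions (1) and (2) already hold (so that $D$ is $\sigma$-adequate and the three-step partial-duality construction of Remark~\ref{PDstagesrmk} applies), the nonempty complementary regions fall into three exhaustive and mutually disjoint types: (i) one annular region between the outer vertex disk boundary and the enclosed inner vertex disk boundaries, one per connected component of $\mathbb{G}_{\sigma_{bl}}|E_{PD}^{\sigma}$; (ii) one bounded region inside each inner vertex disk coming from a fundamental cycle of $\mathbb{G}_{\sigma_{bl}}|E_{PD}^{\sigma}$; and (iii) the unique unbounded region outside all outer vertex disks. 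In each case, the state segments lying inside such a region can be identified, via the deformation retraction of edge ribbons of $\mathbb{G}_{\sigma}$ onto state segments of $H_{\sigma}$, with a specific edge-ribbon subset of $\mathbb{G}_{\sigma}$: the dualized ribbons of $(E_{PD}^{\sigma})^{*}$ emanating from the inner vertex disks of a given component for type (i); the fixed ribbons of $(E_{F}^{\sigma})^{*}$ incident to a given inner vertex disk for type (ii); and the fixed ribbons of $(E_{F}^{\sigma})^{*}$ in the unbounded region for type (iii). Proposition~\ref{edgeprop} transports these subsets bijectively to the edge-ribbon subsets of $\mathbb{G}_{\sigma_{bl}}$ named in conditions (3), (4), and (5). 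Since the resolution type (A or B) assigned to a crossing by $\sigma$ is preserved under these bijections, the $\sigma$-homogeneity condition that no complementary region of $s_{\sigma}(D)$ contains both A-segments and B-segments translates exactly into the monochromaticity statements in conditions (3), (4), and (5).

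The main obstacle I anticipate is the bookkeeping required to verify that Remark~\ref{complregremark} really does provide an exhaustive and nonoverlapping classification of the nonempty complementary regions of $s_{\sigma}(D)$, and that the bijection between edge ribbons lying in a given complementary region and the edge-ribbon subset of $\mathbb{G}_{\sigma_{bl}}$ appearing in condition (3), (4), or (5) is consistently set up under the three-step partial-duality construction (in particular, correctly distinguishing edges dualized in Step~2 from those fixed in Step~3). Once these identifications are in place, both directions of the biconditional follow by a direct region-by-region comparison, combined with a single appeal to Corollary~\ref{bigcoralt} for the $\sigma$-adequacy part.
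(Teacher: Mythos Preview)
Your proposal is correct and follows essentially the same approach as the paper: invoke Corollary~\ref{bigcoralt} to identify conditions~(1)--(2) with $\sigma$-adequacy, and then use the classification of complementary regions in Remark~\ref{complregremark} (together with Remark~\ref{PDstagesrmk} and Definition~\ref{homogeneous}) to identify conditions~(3)--(5) with $\sigma$-homogeneity. The paper's proof is terser and does not explicitly name Proposition~\ref{edgeprop} for the edge-ribbon bijection bookkeeping, but the logical structure is the same.
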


\begin{proof}
$(\Leftarrow)$ Conclusion~(1) and Conclusion~(2) imply, by Corollary~\ref{bigcoralt}, that $D$ is $\sigma$-adequate with respect to the state $\sigma$. Conclusion~(3), Conclusion~(4), and Conclusion~(5) imply, by Definition~\ref{homogeneous} and Remark~\ref{complregremark}, that $D$ is $\sigma$-homogeneous with respect to the state $\sigma$.  

$(\Rightarrow)$ Since $D$ is $\sigma$-adequate with respect to the state $\sigma$, then Corollary~\ref{bigcoralt} implies that Conclusion~(1) and Conclusion~(2) must hold. Let $H_{\sigma}$ denote the state graph and let $\mathbb{G}_{\sigma}$ denote the state ribbon graph associated to the state $\sigma$ of $D$. By Proposition~\ref{dualcor}, $\mathbb{G}_{\sigma}$ can be realized as a partial dual of the black checkerboard state ribbon graph $\mathbb{G}_{\sigma_{bl}}$, which we know to be loopless by Remark~\ref{nolooprmk}. Therefore, we can apply partial duality to $\mathbb{G}_{\sigma_{bl}}$ using the three steps given by Remark~\ref{PDstagesrmk} and can, by Remark~\ref{complregremark}, classify the nonempty complementary regions of $s_{\sigma}(D)$ into three types. Since $D$ is $\sigma$-homogeneous with respect to the state $\sigma$, then, by Definition~\ref{homogeneous}, each nonempty complementary region must either contain only A-segments or only B-segments. Thus, Conclusion~(3), Conclusion~(4), and Conclusion~(5) must hold.    
\end{proof}

We now rephrase Proposition~\ref{bigthm}, changing the focus from the A- and B-resolutions of crossings of $D$ to the $+$ and $-$ edge signs of the Tait graph $G$ that forms the spine of the black checkerboard state ribbon graph $\mathbb{G}_{\sigma_{bl}}$. 

\begin{proposition} \label{bigthm2}
Let $D$ be a connected, reduced, checkerboard-colored link diagram with at least one crossing and with associated Tait graph $G$. Let $\mathbb{G}_{\sigma_{bl}}$ denote the black checkerboard state ribbon graph of $D$. Then $D$ is homogeneously adequate with respect to a state $\sigma$ if and only if there exists a partition $E(\mathbb{G}_{\sigma_{bl}})=E_{PD}^{\sigma} \sqcup E_{F}^{\sigma}$ of the edge ribbons of $\mathbb{G}_{\sigma_{bl}}$ such that the following conditions hold. 
\begin{enumerate}
	\item[(1)] Every edge ribbon of $E_{PD}^{\sigma}$ is contained in a cycle of $\mathbb{G}_{\sigma_{bl}}|E_{PD}^{\sigma}$. 
  \item[(2)] No edge ribbon of $E_{F}^{\sigma}$ has both endpoints on a connected component of $\mathbb{G}_{\sigma_{bl}}|E_{PD}^{\sigma}$. 
	\item[(3)] Each connected component of $\mathbb{G}_{\sigma_{bl}}|E_{PD}^{\sigma}$ corresponds to a connected component of $G|E_{\sigma}$ and this connected component either consists entirely of $+$ edges or consists entirely of $-$ edges. 
	\item[(4)] The edge ribbons of $E_{F}^{\sigma}$ inside a fundamental cycle of $\mathbb{G}_{\sigma_{bl}}|E_{PD}^{\sigma}$ correspond to the edges of $\overline{E_{\sigma}}$ inside a fundamental cycle of $G|E_{\sigma}$ and these edges either consist entirely of $+$ edges or consist entirely of $-$ edges. 
 	\item[(5)] The edge ribbons of $E_{F}^{\sigma}$ in the unbounded region outside all of the fundamental cycles of $\mathbb{G}_{\sigma_{bl}}|E_{PD}^{\sigma}$ correspond to the edges of $\overline{E_{\sigma}}$ outside all of the fundamental cycles of $G|E_{\sigma}$ and these edges either consist entirely of $+$ edges or consist entirely of $-$ edges. 
\end{enumerate}
\end{proposition}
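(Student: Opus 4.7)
The plan is to derive Proposition~\ref{bigthm2} as a direct translation of Proposition~\ref{bigthm}, transporting each ribbon-graph condition into a Tait-graph condition via the correspondences already established in the paper. By Proposition~\ref{Gprop}, the Tait graph $G$ is the spine of the black checkerboard state ribbon graph $\mathbb{G}_{\sigma_{bl}}$, so vertex disks correspond to vertices, edge ribbons correspond to edges, and in particular cycles, fundamental cycles, and connected components of $\mathbb{G}_{\sigma_{bl}}$ correspond bijectively to cycles, fundamental cycles, and connected components of $G$. By Proposition~\ref{edgeprop}, the edge ribbons of $E_{PD}^{\sigma}$ are in bijection with the edges of $E_{\sigma}\subseteq E(G)$, and the edge ribbons of $E_{F}^{\sigma}$ are in bijection with the edges of $\overline{E_{\sigma}}\subseteq E(G)$. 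Under this combined dictionary, Conditions (1) and (2) of Proposition~\ref{bigthm} translate word-for-word into Conditions (1) and (2) of Proposition~\ref{bigthm2}, and likewise the substructures "connected component of $\mathbb{G}_{\sigma_{bl}}|E_{PD}^{\sigma}$", "inside a fundamental cycle of $\mathbb{G}_{\sigma_{bl}}|E_{PD}^{\sigma}$", and "in the unbounded region outside all of the fundamental cycles of $\mathbb{G}_{\sigma_{bl}}|E_{PD}^{\sigma}$" translate into the corresponding substructures of $G|E_{\sigma}$.

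What remains is to convert the phrasing "all A-resolved or all B-resolved" into the phrasing "consists entirely of $+$ edges or consists entirely of $-$ edges". For this I would use Remark~\ref{edgelabel} together with Definition~\ref{phi}, which together partition the edges of $G$ into the four types $(+,A)$, $(+,B)$, $(-,A)$, $(-,B)$, and observe the following. By Definition~\ref{phi}, the edges of $E_{\sigma}=E_{+}^{A}\cup E_{-}^{B}$ are precisely the $(+,A)$ and $(-,B)$ edges of $G$. Hence within $E_{\sigma}$, an edge is A-resolved if and only if it is a $+$ edge, and B-resolved if and only if it is a $-$ edge. Similarly, the edges of $\overline{E_{\sigma}}=E_{+}^{B}\cup E_{-}^{A}$ are the $(+,B)$ and $(-,A)$ edges of $G$, so within $\overline{E_{\sigma}}$, an edge is A-resolved if and only if it is a $-$ edge, and B-resolved if and only if it is a $+$ edge.

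Applying these two equivalences in the appropriate places converts the three resolution-type conditions of Proposition~\ref{bigthm} into the three edge-sign conditions of Proposition~\ref{bigthm2}. Specifically, Condition (3) of Proposition~\ref{bigthm} concerns the ribbons of $E_{PD}^{\sigma}$ in a connected component of $\mathbb{G}_{\sigma_{bl}}|E_{PD}^{\sigma}$; these pull back to edges of $E_{\sigma}$ in the corresponding connected component of $G|E_{\sigma}$, where the A/B-dichotomy becomes the $+/-$ dichotomy, giving Condition (3) of Proposition~\ref{bigthm2}. Conditions (4) and (5) of Proposition~\ref{bigthm} concern ribbons of $E_{F}^{\sigma}$, which pull back to edges of $\overline{E_{\sigma}}$ where again the A/B-dichotomy becomes the $+/-$ dichotomy, giving Conditions (4) and (5) of Proposition~\ref{bigthm2}.

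The main (and essentially only) conceptual obstacle is bookkeeping: one must be careful that the A/B-to-$+$/$-$ translation reverses sense between $E_{\sigma}$ and $\overline{E_{\sigma}}$, so that in all three of Conditions (3), (4), (5) the conclusion correctly reads "$+$ or $-$" without collapsing A and B into the same sign class across the two edge subsets. Once this sign bookkeeping is handled, the equivalence of the two formulations is immediate from Proposition~\ref{Gprop}, Proposition~\ref{edgeprop}, and Proposition~\ref{bigthm}.
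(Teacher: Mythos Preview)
Your proposal is correct and follows essentially the same route as the paper's proof: both reduce to Proposition~\ref{bigthm} and then translate the A/B-resolution conditions into $+$/$-$ edge-sign conditions via the $E_{\sigma}\leftrightarrow E_{PD}^{\sigma}$ correspondence, using that edges of $E_{\sigma}$ are exactly the $(+,A)$ and $(-,B)$ edges while edges of $\overline{E_{\sigma}}$ are exactly the $(+,B)$ and $(-,A)$ edges. Your explicit remark about the sign-reversal between $E_{\sigma}$ and $\overline{E_{\sigma}}$ is the same bookkeeping the paper carries out, and the references you cite (Proposition~\ref{Gprop}, Proposition~\ref{edgeprop}, Definition~\ref{phi}) are equivalent to the paper's (which also invokes Proposition~\ref{signlemma} for the same fact).
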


\begin{proof}
To prove this proposition, it suffices to show that Conclusion~(3), Conclusion~(4), and Conclusion~(5) of Proposition~\ref{bigthm} are equivalent to the corresponding conclusions of this proposition. By Proposition~\ref{signlemma} and Proposition~\ref{Gprop}, given the black checkerboard state $\sigma_{bl}$ of $D$, every edge of $G$ (and, therefore, every edge of $\mathbb{G}_{\sigma_{bl}}$) is either a $(+,B)$ edge or a $(-,A)$ edge. Therefore, we have that $E_{\sigma_{bl}} = \emptyset$ (and, therefore, that $E_{PD}^{\sigma_{bl}} = \emptyset$). By Proposition~\ref{edgeprop}, the edges in the subset $E_{\sigma} \subseteq E(G)$, which are either $(+,A)$ or $(-,B)$ edges, correspond bijectively to the edge ribbons in the subset $(E_{PD}^{\sigma})^{*} \subseteq E(\mathbb{G}_{\sigma})$ and the edges in the subset $\overline{E_{\sigma}} \subseteq E(G)$, which are either $(+,B)$ or $(-,A)$ edges, correspond bijectively to the edge ribbons in the subset $(E_{F}^{\sigma})^{*} \subseteq E(\mathbb{G}_{\sigma})$. 

This says that choosing an edge partition $E(\mathbb{G}_{\sigma_{bl}})=E_{PD}^{\sigma} \sqcup E_{F}^{\sigma}$ of the edge ribbons of $\mathbb{G}_{\sigma_{bl}}$ corresponds to choosing the edge ribbons of $\mathbb{G}_{\sigma_{bl}}$ that will become either $(+,A)$ or $(-,B)$ edge ribbons after partial duality turns $\mathbb{G}_{\sigma_{bl}}$ into $\mathbb{G}_{\sigma}$ and, consequently, choosing the edge ribbons of $\mathbb{G}_{\sigma_{bl}}$ that will stay fixed as either $(+,B)$ or $(-,A)$ edge ribbons after partial duality turns $\mathbb{G}_{\sigma_{bl}}$ into $\mathbb{G}_{\sigma}$. Since the edge ribbons of each connected component of $\mathbb{G}_{\sigma_{bl}}|E_{PD}^{\sigma}$ become either $(+,A)$ or $(-,B)$ edge ribbons, then statements about A- and B-resolved crossings of $D$ can be translated to statements about $+$ and $-$ edges of $G$ (and, therefore, $\mathbb{G}_{\sigma_{bl}}$). Similarly, since the edge ribbons of $E_{F}^{\sigma}$ stay fixed as either $(+,B)$ or $(-,A)$ edge ribbons, then statements about A- and B-resolved crossings of $D$ can be translated to statements about $+$ and $-$ edges of $G$ (and, therefore, $\mathbb{G}_{\sigma_{bl}}$).  
\end{proof}

We are now able to prove Theorem~\ref{maincor2} from the introduction. 

\begin{proof}[Proof of Theorem~\ref{maincor2}] 
By applying Proposition~\ref{Gprop} and Proposition~\ref{edgeprop}, we are able to translate Proposition~\ref{bigthm2} from the language of the black checkerboard state ribbon graph $\mathbb{G}_{\sigma_{bl}}$ and the edge subset $E_{PD}^{\sigma} \subseteq E(\mathbb{G}_{\sigma_{bl}})$ to the language of the Tait graph $G$ and the edge subset $E_{\sigma} \subseteq E(G)$. This gives the desired result. 
\end{proof}

Theorem~\ref{maincor2} provides a way to use the Tait graph $G$ associated to a checkerboard-colored link diagram $D$ to look for all homogeneously adequate states of $D$. 

\begin{remark}
In \cite{Ozawa}, Ozawa claims that the algebraic link diagram given in his Figure~7, call it $D$, has no homogeneously adequate states. This claim can be proved by using Theorem~\ref{maincor2} as follows. Without loss of generality, give $D$ the canonical checkerboard coloring and construct the associated Tait graph $G$. See Figure~\ref{Ozawa} for a depiction of $G$. For a contradiction, suppose $D$ has a homogeneously adequate state, call it $\sigma$. Then, by Theorem~\ref{maincor2}, Conclusions~(1) through (5) must hold. First, suppose $\sigma=\sigma_{bl}$ is the black checkerboard state. Then $E_{\sigma_{bl}} = \emptyset$ and, therefore, $G|E_{\sigma_{bl}}$ contains all of the vertices of $G$ but has no edges. This violates Conclusion~(5) of Theorem~\ref{maincor2}. See Figure~\ref{Ozawa}. Now suppose $\sigma \neq \sigma_{bl}$ is not the black checkerboard state. Then $E_{\sigma} \neq \emptyset$ and, therefore, $G|E_{\sigma_{bl}}$ contains edges. By Conclusions~(1) and (3) of Theorem~\ref{maincor2}, each connected component of $G|E_{\sigma}$ must be a union of cycles, all of whose edges are $+$ edges or all of whose edges are $-$ edges. This means that $E_{\sigma}$ cannot contain any of the edges of the path subgraph of length three with all $+$ edges since there is no cycle of $G$ with all $+$ edges that contains any of these edges. See Figure~\ref{Ozawa}. By a similar argument, $E_{\sigma}$ cannot contain any of the edges of the path subgraph of length three with all $-$ edges. Therefore, the edges in these two paths of length three are forced to be contained in the unbounded region outside all of the fundamental cycles of $G|E_{\sigma}$. This violates Conclusion~(5) of Theorem~\ref{maincor2}. 
\end{remark}

\begin{figure} 
\centering
\def\svgwidth{2in}
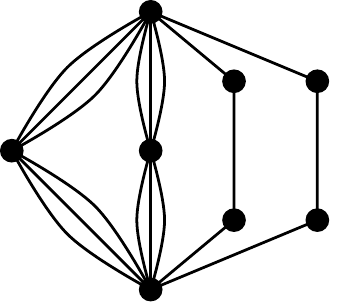
\caption{The Tait graph $G$ corresponding to the canonical checkerboard coloring of the algebraic link diagram given in Figure~7 of \cite{Ozawa}. All of the multiedges in the top half of the graph are $-$ edges and all of the multiedges in the bottom half of the graph are $+$ edges. The right side of the graph contains, as subgraphs, two path graphs of length three, one with all $+$ edges and one with all $-$ edges.}
\label{Ozawa}
\end{figure}

\subsection{A Method for Finding All Homogeneously Adequate States of a Link Diagram}

In Section~\ref{methodsec}, we presented a method for finding all of the $\sigma$-adequate states of a connected, reduced, checkerboard-colored link diagram $D$. In this section, we add to this method so that all homogeneously adequate states of such a link diagram can be found. 

\bigskip

\noindent \textbf{\underline{Steps 1 through 6}:} Use the method from Section~\ref{methodsec} to find all of the $\sigma$-adequate states of the given link diagram. 

\bigskip

\noindent \textbf{\underline{Step 7}:} Use Conclusion~(3), Conclusion~(4), and Conclusion~(5) of Theorem~\ref{maincor2} to determine which, if any, of the $\sigma$-adequate states found above are also $\sigma$-homogeneous with respect to the same state. 
 
\bigskip

\noindent \textbf{\underline{An Example of the Method}:} Return to the example (from Section~\ref{methodsec}) of the diagram $D$ of the knot $11n_{95}$. 

\bigskip

\noindent \underline{Steps 1 through 6}: In Section~\ref{methodsec}, we found the 20 $\sigma$-adequate states of $D$.  

\bigskip

\noindent \underline{Step 7}: By applying Theorem~\ref{maincor2} to Figure~\ref{11n95}, we get that none of the $\sigma$-adequate states of $D$ are also $\sigma$-homogeneous with respect to the given state. Specifically, by checking through all 20 $\sigma$-adequate states, only four states satisfy Conclusion~(3) of Theorem~\ref{maincor2}. These are the black checkerboard state $\sigma_{bl}$ and the three states $\sigma_{1}$, $\sigma_{2}$, and $\sigma_{6}$ with corresponding edge subsets $E_{\sigma}$ forming fundamental cycles of $G$ that bound no regions, Region~1, Region~3, and both Region~1 and Region~3 (respectively). See Table~\ref{statetable}. In each of these four cases, Conclusion~(4) of Theorem~\ref{maincor2} holds vacuously but Conclusion~(5) of Theorem~\ref{maincor2} fails. Therefore, the standard diagram of the knot $11n_{95}$ has 20 $\sigma$-adequate states but no homogeneously adequate states. 

Note that we can use Theorem~\ref{maincor2} to directly search for homogeneously adequate states of $D$. While this method may be quicker, the advantage to using the method of this section is that Theorem~\ref{mainthm1} and Theorem~\ref{mainthm2} are utilized as a means to confirm, using computations of the symmetrized Tutte polynomial $\chi_{G}(t,t)$ and the polynomials $\phi_{D}^{\sigma}(t)$, that all of the $\sigma$-adequate states of $D$ have been found before Theorem~\ref{maincor2} is utilized.

\bibliography{mybib}

\begin{thebibliography}{10}

\bibitem{TailAdeq}
Cody Armond and Dasbach.
\newblock The head and tail of the colored {J}ones polynomial for adequate
  knots.
\newblock ArXiv:1310.4537. To appear in Proc. AMS.

\bibitem{Homogeneous}
Paige Bartholomew, Shane McQuarrie, Jessica~S. Purcell, and Kai Weser.
\newblock Volume and geometry of homogeneously adequate knots.
\newblock {\em J. Knot Theory Ramifications}, 24(8):1550044, 29, 2015.

\bibitem{KnotInfo}
J.~C. Cha and C.~Livingston.
\newblock Knotinfo: Table of knot invariants.
\newblock \url{http://www.indiana.edu/~knotinfo}.
\newblock Accessed: June 2, 2016.

\bibitem{Chmutov}
Sergei Chmutov.
\newblock Generalized duality for graphs on surfaces and the signed
  {B}ollob\'as-{R}iordan polynomial.
\newblock {\em J. Combin. Theory Ser. B}, 99(3):617--638, 2009.

\bibitem{HeadTail}
Oliver~T. Dasbach and Xiao-Song Lin.
\newblock On the head and the tail of the colored {J}ones polynomial.
\newblock {\em Compos. Math.}, 142(5):1332--1342, 2006.

\bibitem{Guts}
David Futer, Efstratia Kalfagianni, and Jessica Purcell.
\newblock {\em Guts of surfaces and the colored {J}ones polynomial}, volume
  2069 of {\em Lecture Notes in Mathematics}.
\newblock Springer, Heidelberg, 2013.

\bibitem{Survey}
David Futer, Efstratia Kalfagianni, and Jessica~S. Purcell.
\newblock {J}ones polynomials, volume and essential knot surfaces: a survey.
\newblock In {\em Knots in {P}oland. {III}. {P}art 1}, volume 100 of {\em
  Banach Center Publ.}, pages 51--77. Polish Acad. Sci. Inst. Math., Warsaw,
  2014.

\bibitem{New}
David Futer, Efstratia Kalfagianni, and Jessica~S. Purcell.
\newblock Hyperbolic semi-adequate links.
\newblock {\em Comm. Anal. Geom.}, 23(5):993--1030, 2015.

\bibitem{Jones1}
V.~F.~R. Jones.
\newblock {H}ecke algebra representations of braid groups and link polynomials.
\newblock {\em Ann. of Math. (2)}, 126(2):335--388, 1987.

\bibitem{Jones2}
Vaughan F.~R. Jones.
\newblock A polynomial invariant for knots via von {N}eumann algebras.
\newblock In {\em Fields {M}edallists' lectures}, volume~5 of {\em World Sci.
  Ser. 20th Century Math.}, pages 448--458. World Sci. Publ., River Edge, NJ,
  1997.

\bibitem{EffieChristine}
Efstratia Kalfagianni and Christine Ruey~Shan Lee.
\newblock On the degree of the colored {J}ones polynomial.
\newblock {\em Acta Math. Vietnam.}, 39(4):549--560, 2014.

\bibitem{Bracket}
Louis~H. Kauffman.
\newblock State models and the {J}ones polynomial.
\newblock {\em Topology}, 26(3):395--407, 1987.

\bibitem{Kauffman}
Louis~H. Kauffman.
\newblock A {T}utte polynomial for signed graphs.
\newblock {\em Discrete Appl. Math.}, 25(1-2):105--127, 1989.
\newblock Combinatorics and complexity (Chicago, IL, 1987).

\bibitem{KauffPoly}
Louis~H. Kauffman.
\newblock An invariant of regular isotopy.
\newblock {\em Trans. Amer. Math. Soc.}, 318(2):417--471, 1990.

\bibitem{Kook}
W.~Kook, V.~Reiner, and D.~Stanton.
\newblock A convolution formula for the {T}utte polynomial.
\newblock {\em J. Combin. Theory Ser. B}, 76(2):297--300, 1999.

\bibitem{Christine}
Christine Ruey~Shan Lee.
\newblock Stability properties of the colored {J}ones polynomial.
\newblock ArXiv:1409.4457.

\bibitem{LT}
W.~B.~R. Lickorish and M.~B. Thistlethwaite.
\newblock Some links with nontrivial polynomials and their crossing-numbers.
\newblock {\em Comment. Math. Helv.}, 63(4):527--539, 1988.

\bibitem{Lickorish}
W.~B.~Raymond Lickorish.
\newblock {\em An introduction to knot theory}, volume 175 of {\em Graduate
  Texts in Mathematics}.
\newblock Springer-Verlag, New York, 1997.

\bibitem{Ozawa}
Makoto Ozawa.
\newblock Essential state surfaces for knots and links.
\newblock {\em J. Aust. Math. Soc.}, 91(3):391--404, 2011.

\bibitem{Przytycki}
J{\'o}zef~H. Przytycki.
\newblock History of the knot theory from {V}andermonde to {J}ones.
\newblock In {\em X{XIV}th {N}ational {C}ongress of the {M}exican
  {M}athematical {S}ociety ({S}panish) ({O}axtepec, 1991)}, volume~11 of {\em
  Aportaciones Mat. Comun.}, pages 173--185. Soc. Mat. Mexicana, M\'exico,
  1992.

\bibitem{Stoimenow}
Alexander Stoimenow.
\newblock Coefficients and non-triviality of the {J}ones polynomial.
\newblock {\em J. Reine Angew. Math.}, 657:1--55, 2011.

\bibitem{Stoimenow2}
Alexander Stoimenow.
\newblock On the crossing number of semiadequate links.
\newblock {\em Forum Math.}, 26(4):1187--1246, 2014.

\bibitem{Thistlethwaite1}
Morwen~B. Thistlethwaite.
\newblock A spanning tree expansion of the {J}ones polynomial.
\newblock {\em Topology}, 26(3):297--309, 1987.

\bibitem{Thistlethwaite3}
Morwen~B. Thistlethwaite.
\newblock {K}auffman's polynomial and alternating links.
\newblock {\em Topology}, 27(3):311--318, 1988.

\bibitem{Thistlethwaite2}
Morwen~B. Thistlethwaite.
\newblock On the {K}auffman polynomial of an adequate link.
\newblock {\em Invent. Math.}, 93(2):285--296, 1988.

\bibitem{Tutte}
W.~T. Tutte.
\newblock A ring in graph theory.
\newblock {\em Proc. Cambridge Philos. Soc.}, 43:26--40, 1947.

\end{thebibliography}
\bibliographystyle{plain}

\end{document}